\newtheorem{theorem}{Theorem}[section]
\newtheorem{lemma}[theorem]{Lemma}
\theoremstyle{definition}
\newtheorem*{definition}{Definition}
\theoremstyle{remark}
\newtheorem{remark}{Remark}[section]
\begin{document}

\begin{frontmatter}


\title{$N$-quandles of links}

\author{Blake Mellor and Riley Smith\fnref{riley}}
\address{Loyola Marymount University, 1 LMU Drive, Los Angeles, CA 90045}
\fntext[riley]{This paper is based in part on the second author's senior thesis project at LMU.}

\date{}
\begin{abstract} 
The fundamental quandle is a powerful invariant of knots and links, but it is difficult to describe in detail. It is often useful to look at quotients of the quandle, especially finite quotients.  One natural quotient introduced by Joyce \cite{JO} is the $n$-quandle.  Hoste and Shanahan \cite{HS2} gave a complete list of the knots and links which have finite $n$-quandles for some $n$.  We introduce a generalization of $n$-quandles, denoted $N$-quandles (for a quandle with $k$ algebraic components, $N$ is a $k$-tuple of positive integers). We conjecture a classification of the links with finite $N$-quandles for some $N$, and we prove one direction of the classification.
\end{abstract}

\begin{keyword}
fundamental quandle, $N$-quandle, $n$-quandle.
\MSC[2010] 57M25, 57M27
\end{keyword}

\end{frontmatter}

\section{Introduction}
Every oriented knot and link $L$ has a fundamental quandle $Q(L)$. For tame knots, Joyce \cite{JO, JO2} and Matveev \cite{Ma} showed that the fundamental quandle is a complete invariant (up to a change of orientation).  Unfortunately, classifying quandles is no easier than classifying knots - in particular, except for the unknot and Hopf link, the fundamental quandle is always infinite.  This motivates us to look at quotients of the quandle, and particularly finite quotients, as possibly more tractable ways to distinguish quandles. Since knot and link quandles are residually finite \cite{BSS1, BSS2}, any pair of elements are distinguished by some finite quotient, so there are many potentially useful quotients to study.  One such quotient that has a topological interpretation is the $n$-quandle introduced by Joyce \cite{JO, JO2}.  Hoste and Shanahan \cite{HS2} proved that the $n$-quandle $Q_n(L)$ is finite if and only if $L$ is the singular locus (with each component labeled $n$) of a spherical 3-orbifold with underlying space $\mathbb{S}^3$. This result, together with Dunbar's \cite{DU} classification of all geometric, non-hyperbolic 3-orbifolds, allowed them to give a complete list of all  knots and links in $\mathbb{S}^3$ with finite $n$-quandles for some $n$ \cite{HS2}; see Table~\ref{linktable}.  Many of these finite $n$-quandles have been described in detail \cite{CHMS, HS1, Me}.

However, Dunbar's classification also includes orbifolds whose singular locus is a link with different labels on different components. This motivates us to generalize the idea of an $n$-quandle to define an $N$-quandle, where $N = (n_1,\dots, n_k)$ (and $k$ is the number of components of the link).  We conjecture
\medskip

\noindent {\bf Main Conjecture.} {\em A link $L$ with $k$ components has a finite $(n_1,\dots, n_k)$-quandle if and only if there is a spherical orbifold with underlying space $\mathbb{S}^3$ whose singular locus is the link $L$, with component $i$ labeled $n_i$.}
\medskip

In this paper we will define the $N$-quandle of a link and prove it is an invariant of isotopy.  We will then prove half of our conjecture, and show that if there is a spherical orbifold with underlying space $\mathbb{S}^3$ whose singular locus is the link $L$, with component $i$ labeled $n_i$, then the $(n_1,\dots, n_k)$-quandle of $L$ is finite.

\begin{table}[htbp]
{$
\begin{array}{cccc}
\includegraphics[width=1.25in,trim=0 0 0 0,clip]{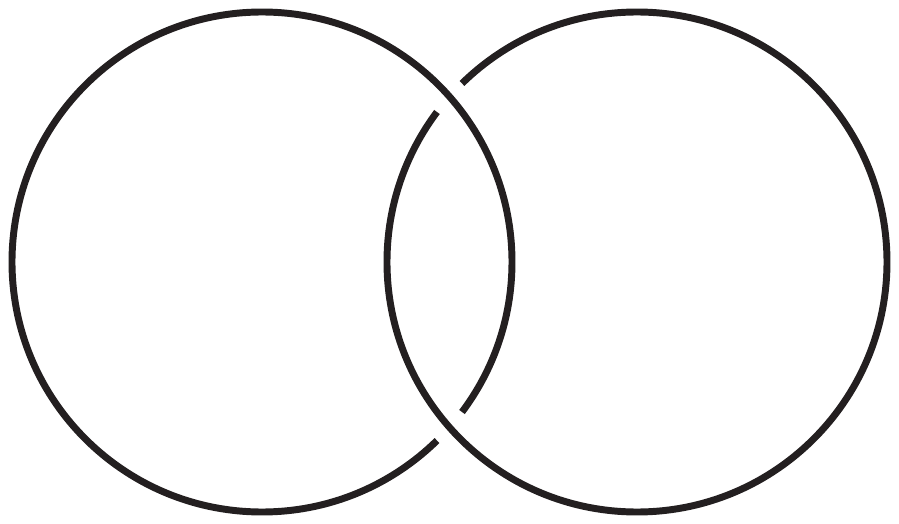}   & \includegraphics[width=1.25in,trim=0 0 0 0,clip]{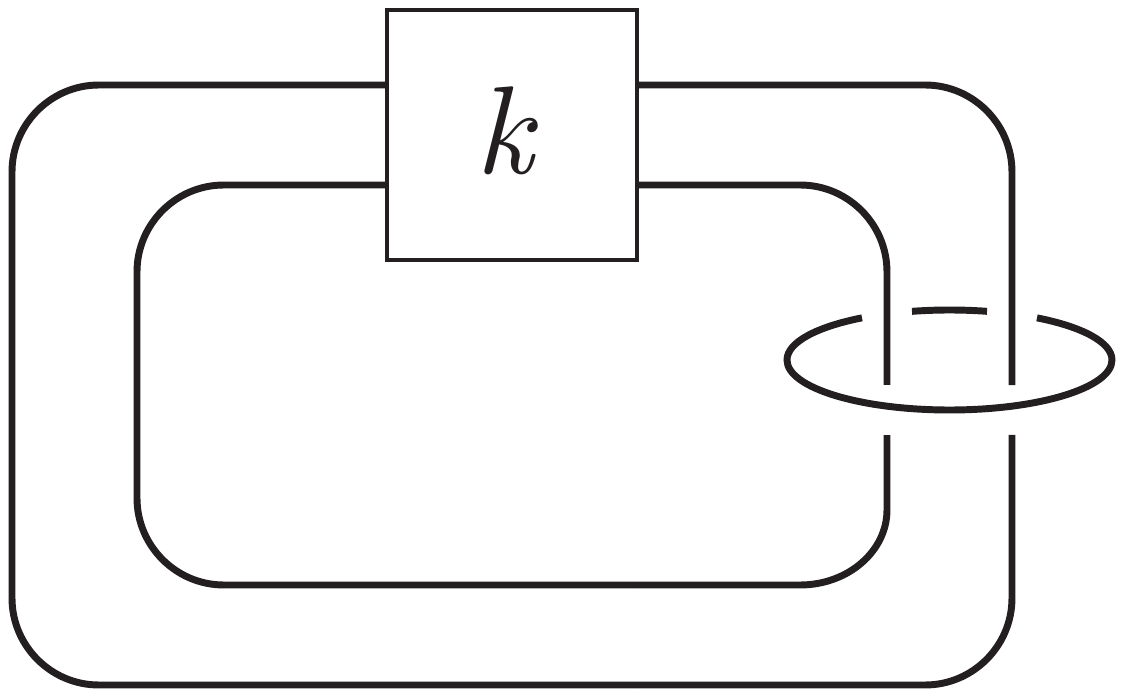}  & \includegraphics[width=1.0in,trim=0 0 0 0,clip]{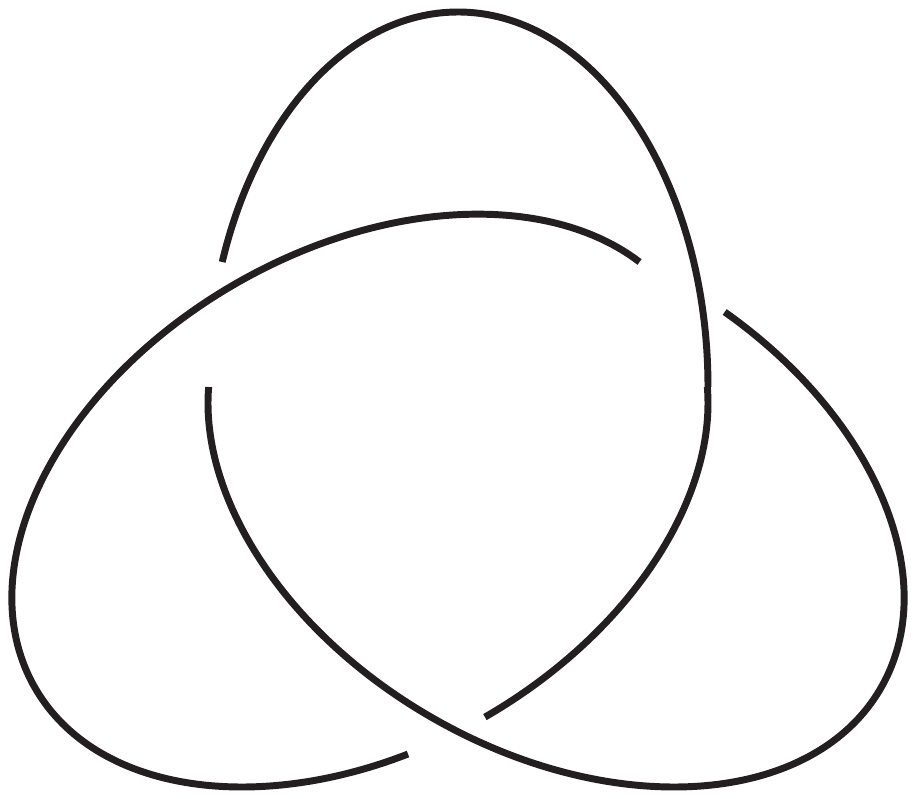} \\
\scriptstyle n > 1 & \scriptstyle k \neq 0,\  n=2& \scriptstyle n=3, 4, 5 \\
\\
\includegraphics[width=1.0in,trim=0 0 0 0,clip]{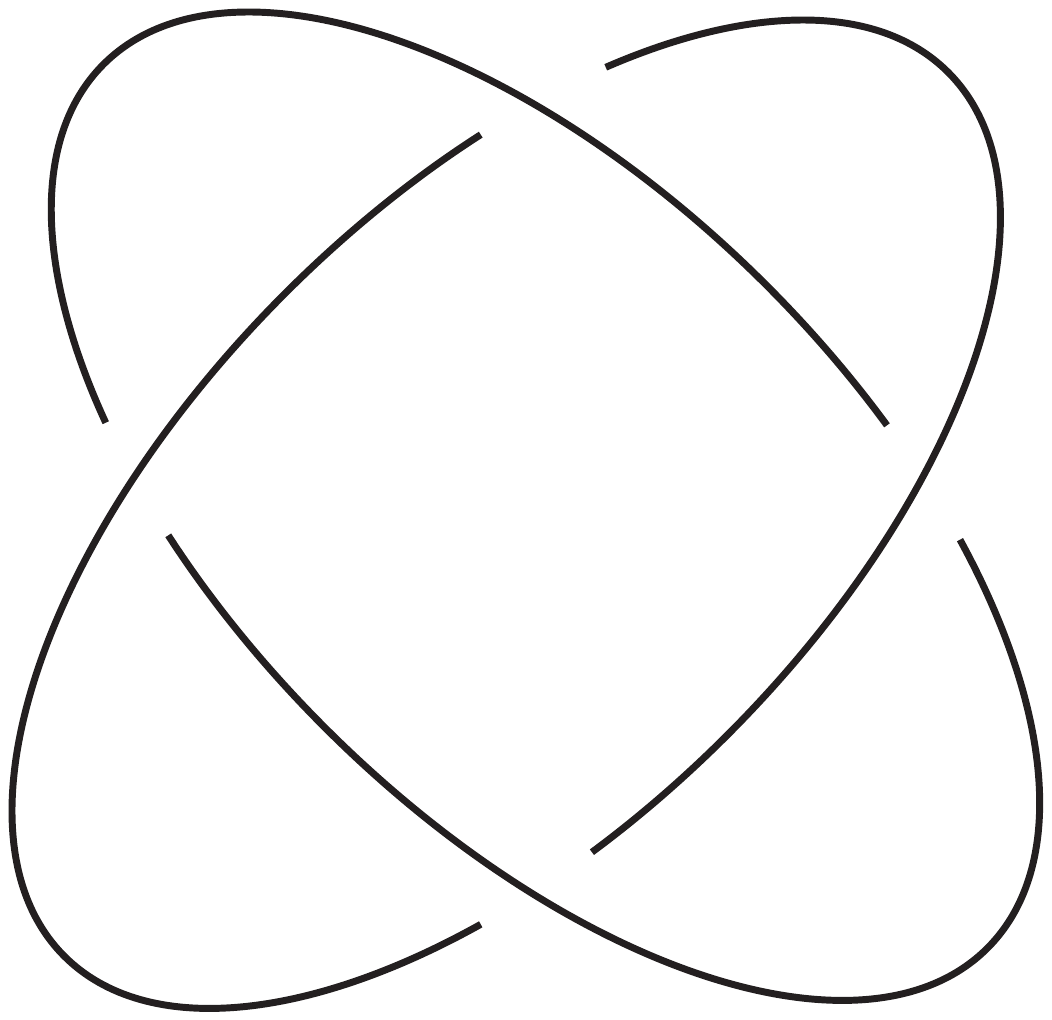}  & \includegraphics[width=1.0in,trim=0 0 0 0,clip]{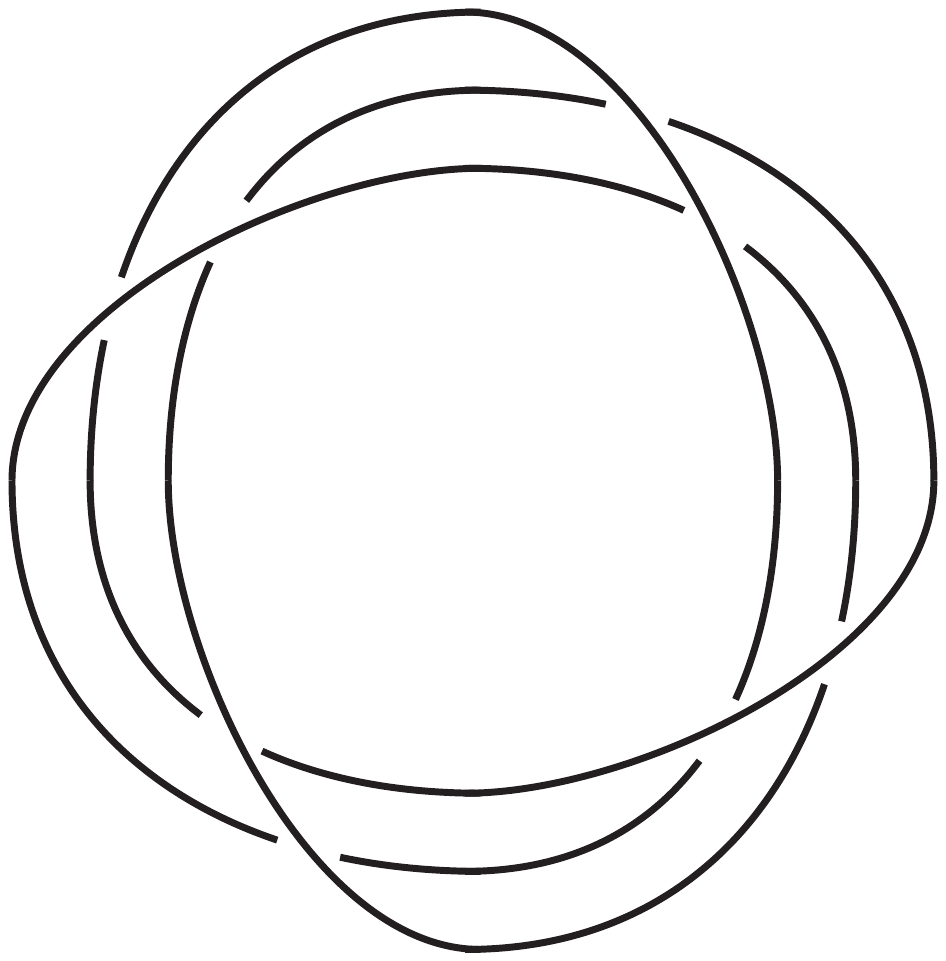}  & \includegraphics[width=1.0in,trim=0 0 0 0,clip]{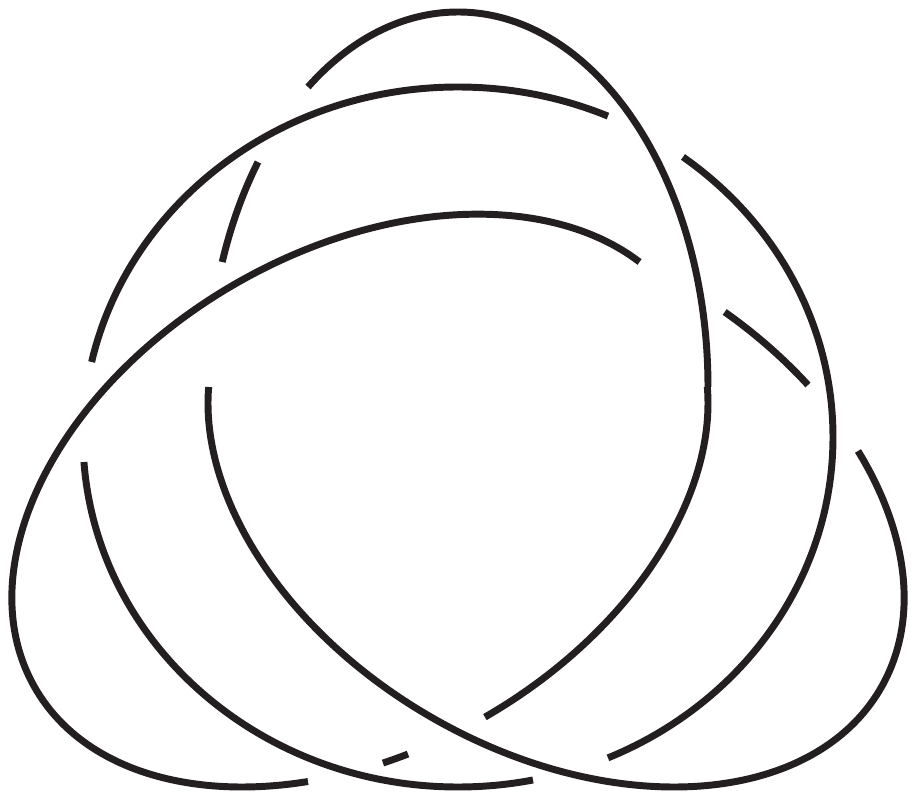} \\
\scriptstyle n =3 & \scriptstyle n=2& \scriptstyle n=2 \\
\\
\includegraphics[width=1.0in,trim=0 0 0 0,clip]{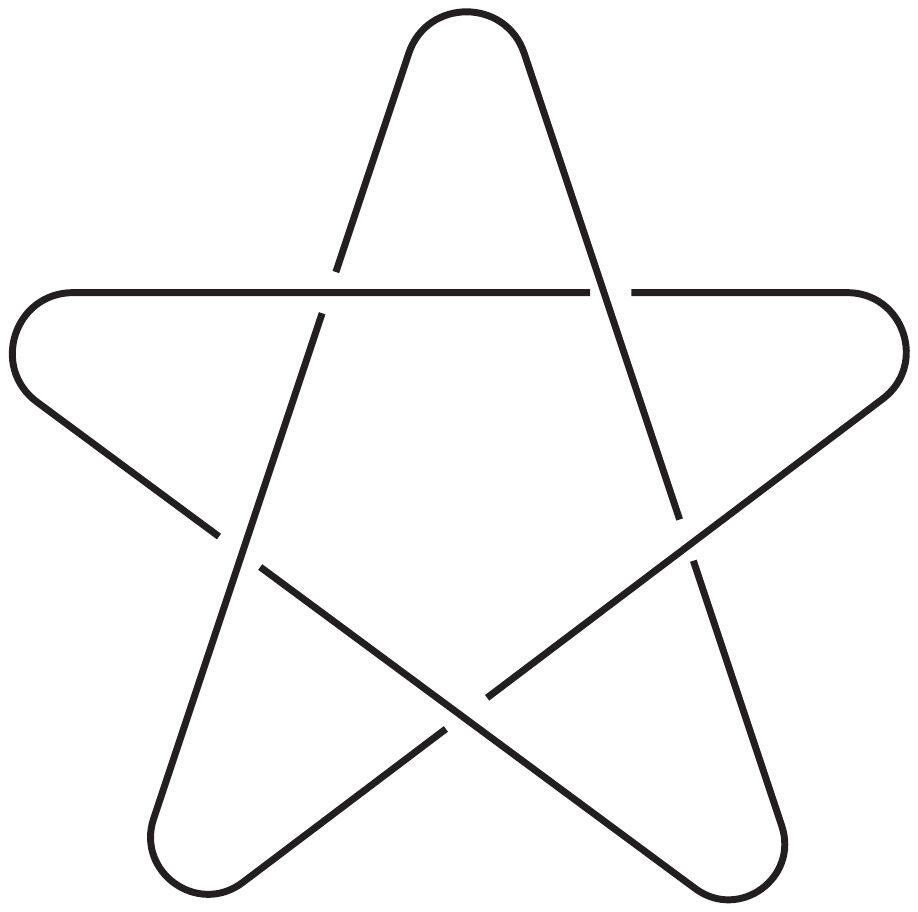}  & \includegraphics[width=1.0in,trim=0 0 0 0,clip]{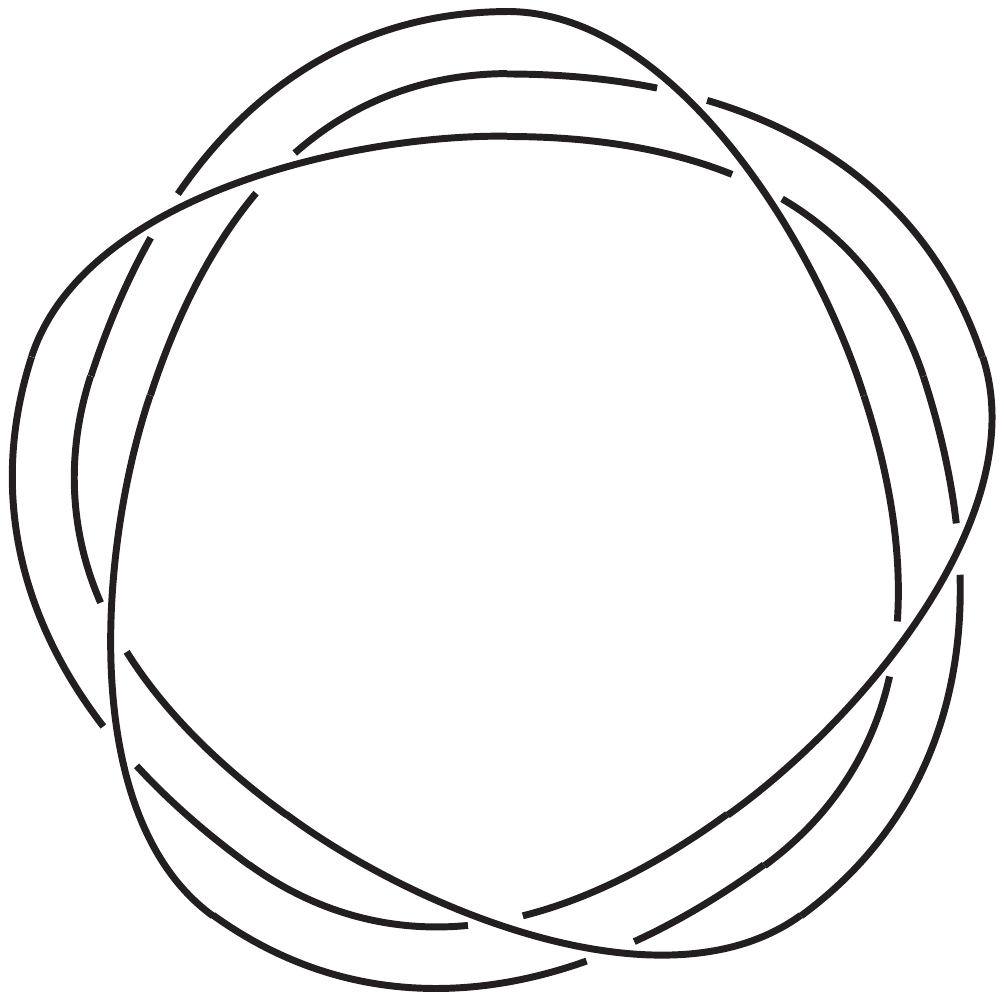}  & \includegraphics[width=1.25in,trim=0 0 0 0,clip]{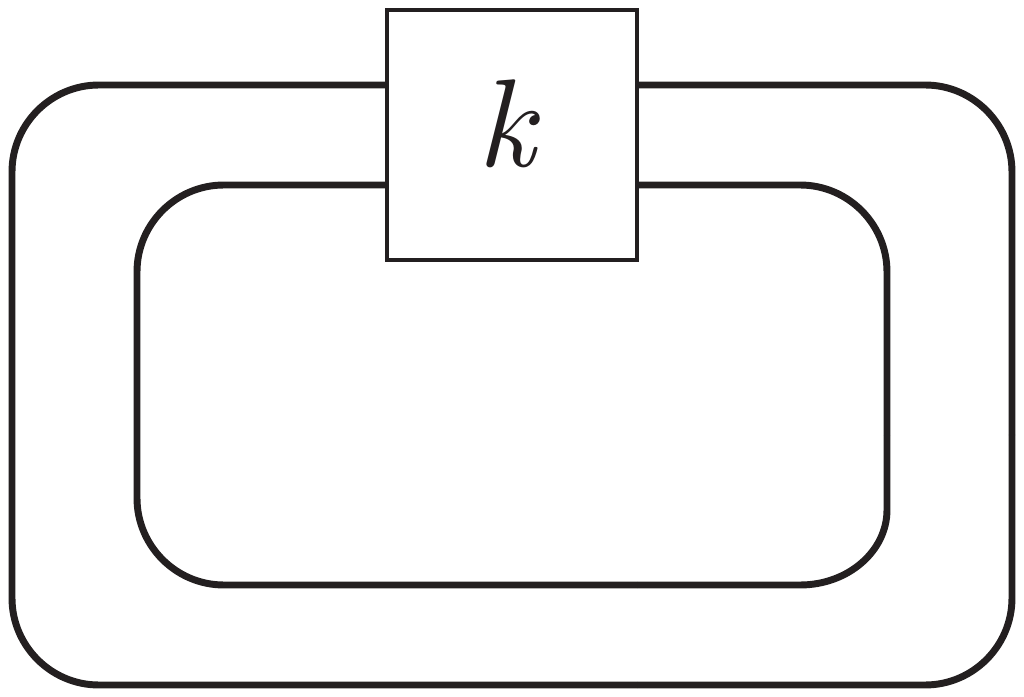} \\
\scriptstyle n =3 & \scriptstyle n=2& \scriptstyle k\neq0,\ n=2 \\
\\
\includegraphics[width=1.25in,trim=0 0 0 0,clip]{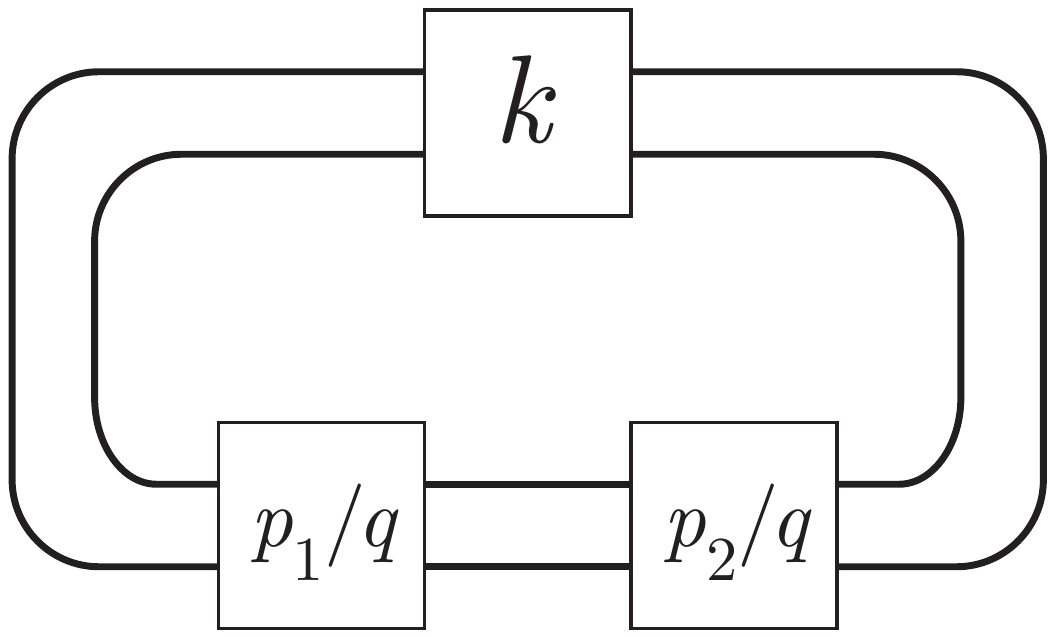}  & \includegraphics[width=1.15in,trim=0 5pt 0 0,clip]{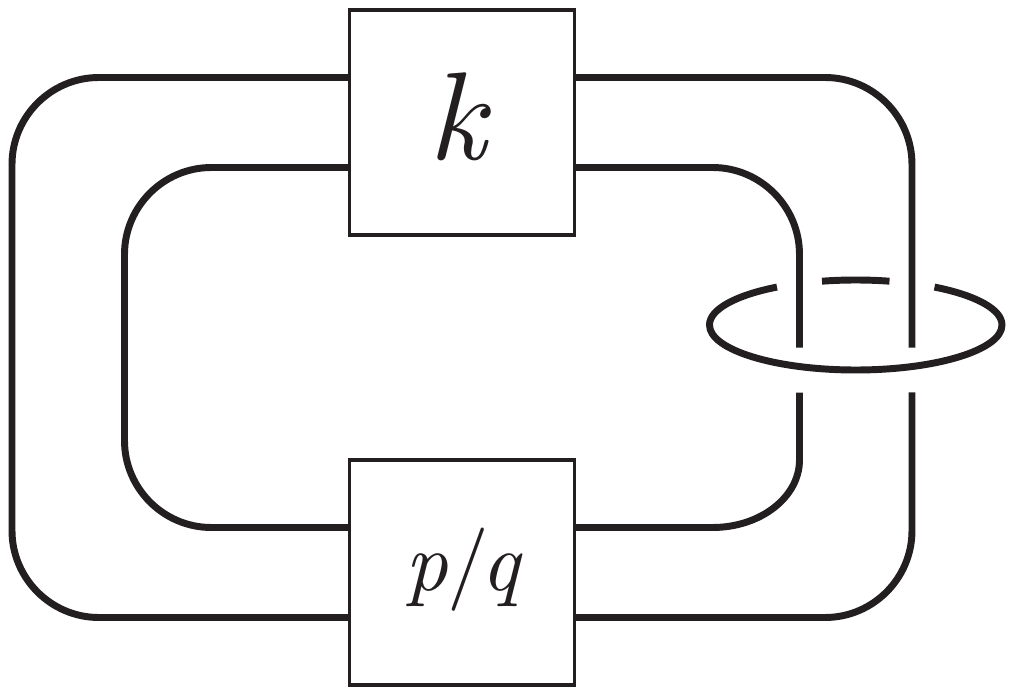} &  \includegraphics[width=1.65in,trim=0 0 0 0,clip]{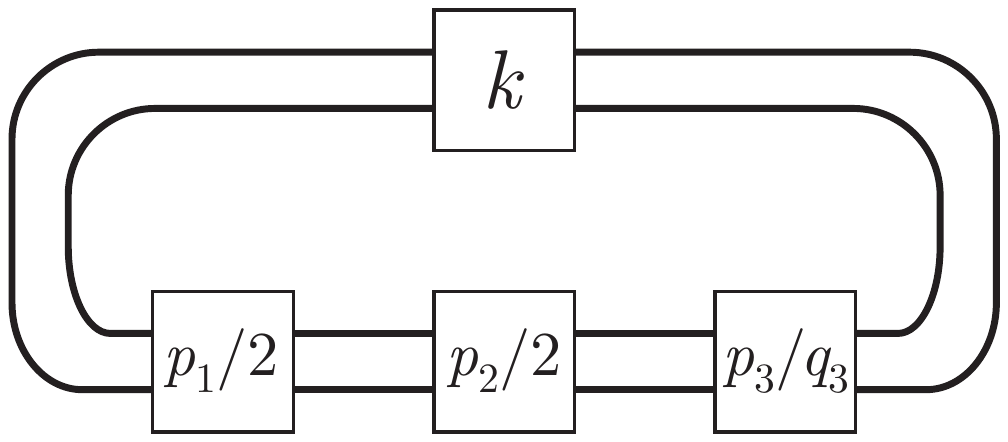} \\
\scriptstyle k+p_1/q+p_2/q \neq 0,\ n =2  &\scriptstyle n=2& \scriptstyle k+p_1/2+p_2/2+p_3/q_3 \neq 0,\ n =2 \\
\\
\includegraphics[width=1.65in,trim=0pt 0pt 0pt 0pt,clip]{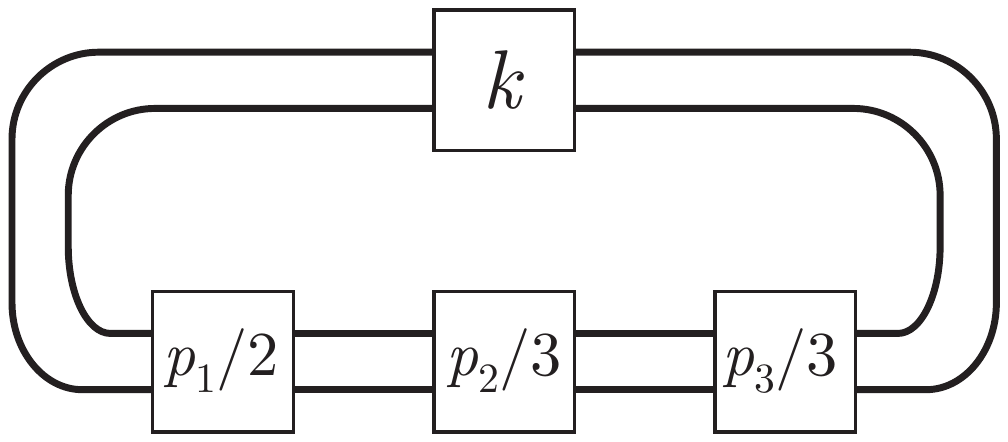}  & \includegraphics[width=1.65in,trim=0pt 0pt 0pt 0pt,clip]{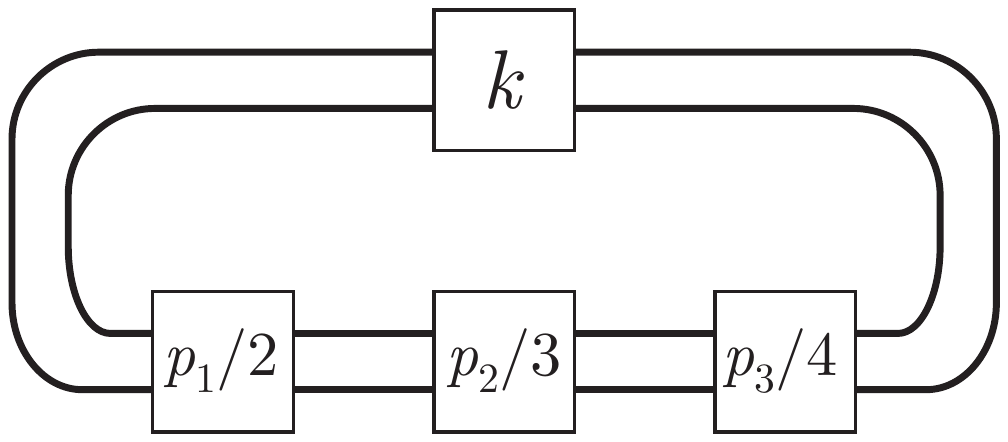}  & \includegraphics[width=1.65in,trim=0pt 0pt 0pt 0pt,clip]{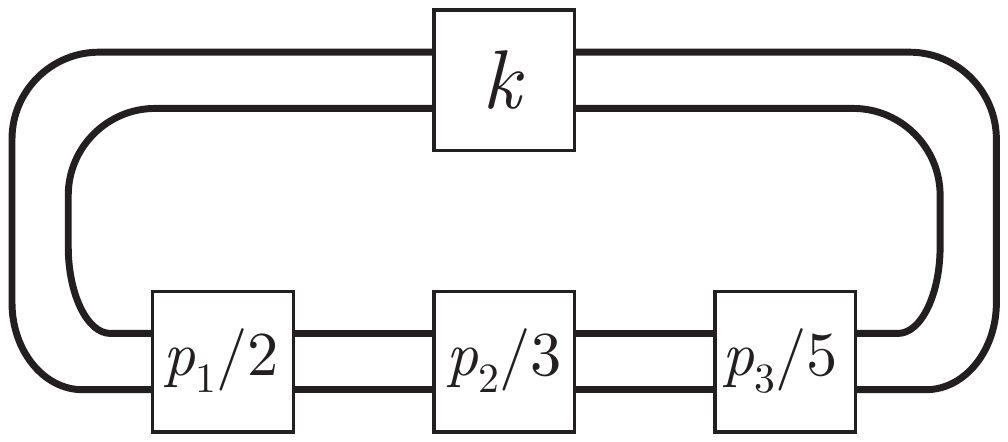}\\
\scriptstyle k+p_1/2+p_2/3+p_3/3 \neq 0,\ n =2  & \scriptstyle k+p_1/2+p_2/3+p_3/4 \neq 0,\ n =2  & \scriptstyle k+p_1/2+p_2/3+p_3/5 \neq 0,\ n =2
\end{array}
$}
\caption{\parbox{3in}{Links $L \in \mathbb{S}^3$ with finite $Q_n(L)$. \newline Here \fbox{$k$} represents $k$ right-handed half-twists, and \fbox{$p/q$} represents a rational tangle.}}
\label{linktable}
\end{table}

\section{Quandles, $n$-quandles and $N$-quandles} \label{S:quandles}

We begin with a review of the definition of a quandle and its associated $n$-quandles. We refer the reader to \cite{JO}, \cite{JO2}, \cite{FR}, and \cite{WI} for more detailed information.

A {\it  quandle} is a set $Q$ equipped with two binary operations $\rhd$ and $\rhd^{-1}$ that satisfy the following three axioms:
\begin{itemize}
\item[\bf A1.] $x \rhd x =x$ for all $x \in Q$.
\item[\bf A2.] $(x \rhd y) \rhd^{-1} y = x = (x \rhd^{-1} y) \rhd y$ for all $x, y \in Q$.
\item[\bf A3.] $(x \rhd y) \rhd z = (x \rhd z) \rhd (y \rhd z)$ for all $x,y,z \in Q$.
\end{itemize}

Each element $x\in Q$ defines a map $S_x:Q \to Q$ by $S_x(y)=y \rhd x$. The axiom A2 implies that each $S_x$ is a bijection and the axiom A3 implies that each $S_x$ is a quandle homomorphism, and therefore an automorphism. We call $S_x$ the {\it point symmetry at $x$}. The {\em inner automorphism group} of $Q$, Inn$(Q)$, is the group of automorphisms generated by the point symmetries.

It is important to note that the operation $\rhd$ is, in general, not associative. To clarify the distinction between $(x \rhd y) \rhd z$ and $x \rhd (y \rhd z)$, we adopt the exponential notation introduced by Fenn and Rourke in \cite{FR} and denote $x \rhd y$ as $x^y$ and $x \rhd^{-1} y$ as $x^{\bar y}$. With this notation, $x^{yz}$ will be taken to mean $(x^y)^z=(x \rhd y)\rhd z$ whereas $x^{y^z}$ will mean $x\rhd (y \rhd z)$. 

The following useful lemma from \cite{FR} describes how to re-associate a product in a quandle given by a presentation. 

\begin{lemma} \label{leftassoc}
If $a^u$ and $b^v$ are elements of a quandle, then
$$\left(a^u \right)^{\left(b^v \right)}=a^{u \bar v b v} \ \ \ \ \mbox{and}\ \ \ \ \left(a^u \right)^{\overline{\left(b^v \right)}}=a^{u \bar v \bar b v}.$$
\end{lemma}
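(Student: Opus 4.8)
The plan is to reduce both identities to the single structural fact encoded by A3: that every point symmetry $S_z$ is a quandle automorphism. Rewriting A3 in the exponential notation, it says precisely that $S_z$ preserves $\rhd$; that is, for all $x,y,z$,
$$x^{yz} = \left(x^z\right)^{\left(y^z\right)}. $$
Reading this from right to left turns a product whose exponent is itself a product, $\left(x^z\right)^{\left(y^z\right)}$, into the left-associated word $x^{yz}$. Since $S_z$ is moreover a bijection, it also preserves $\rhd^{-1}$: by A2 the element $x^{\bar y}$ is characterized as the unique $w$ with $w^y = x$, and this characterization is transported by any automorphism, giving the companion identity
$$x^{\bar y z} = \left(x^z\right)^{\overline{\left(y^z\right)}}. $$

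First I would prove the left-hand identity. The idea is to manufacture the pattern $\left(x^z\right)^{\left(y^z\right)}$ on the left side of the claim by choosing $z = v$ and $y = b$. Setting $x = a^{u\bar v}$, axiom A2 gives $x^v = a^{u\bar v v} = a^u$, so that
$$\left(a^u\right)^{\left(b^v\right)} = \left(x^v\right)^{\left(b^v\right)} = x^{bv} = a^{u\bar v b v},$$
where the middle equality is A3 in the form above. The right-hand identity follows in exactly the same way, using the inverse companion identity in place of A3:
$$\left(a^u\right)^{\overline{\left(b^v\right)}} = \left(x^v\right)^{\overline{\left(b^v\right)}} = x^{\bar b v} = a^{u\bar v \bar b v}.$$

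Then I would note that the argument extends verbatim when $u$ and $v$ are words rather than single generators, since a composite of point symmetries is still an automorphism and the cancellation $\bar v v$ still collapses by repeated use of A2. The only genuinely delicate point is the inverse companion identity: one must confirm that $S_v$ respects $\rhd^{-1}$ and not merely $\rhd$. I expect this to be the main (if modest) obstacle, and I would handle it by the characterization-of-inverse argument above rather than by a separate induction. As a sanity check, both formulas are exactly what the analogy with conjugation predicts: in the associated group $a^{b^v}$ corresponds to conjugating $a$ by $v^{-1}bv$, matching the exponent $u\bar v b v$.
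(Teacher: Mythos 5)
Your proof is correct. There is nothing in the paper to compare it against: the lemma is imported from Fenn and Rourke \cite{FR}, and the paper states it without proof. Your argument is the standard one for this fact: A3 says exactly that each point symmetry $S_z$ is a $\rhd$-homomorphism, A2 makes it bijective, so every composite operator $x \mapsto x^v$ (for $v$ a word in generators and their inverses) is an automorphism and therefore also preserves $\rhd^{-1}$ --- and your characterization-of-inverse argument for that last point is the right way to get it, rather than treating it as a separate axiom. Applying this with $x = a^{u\bar v}$, so that $x^v = a^u$ by repeated cancellation via A2, then gives both identities in one stroke; the only thing a final write-up should make explicit is the short induction on the length of $v$ buried in ``extends verbatim,'' each step of which is A3 (or its $\rhd^{-1}$ companion) for a single letter.
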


Using Lemma~\ref{leftassoc}, elements in a quandle given by a presentation $\langle S \mid R \rangle$ can be represented as equivalence classes of expressions of the form $a^w$ where $a$ is a generator in $S$ and $w$ is a word in the free group on $S$ (with $\bar x$ representing the inverse of $x$).

If $n$ is a natural number, a quandle $Q$ is an {\em $n$-quandle} if $x^{y^n} =x$ for all $x,y \in Q$, where by $y^n$ we mean $y$ repeated $n$ times. Given a presentation $\langle S \,|\, R\rangle$ of $Q$, a presentation of  the quotient $n$-quandle $Q_n$ is obtained by adding the relations $x^{y^n}=x$ for every pair of distinct generators $x$ and $y$. 

The action of the inner automorphism group Inn$(Q)$ on the quandle $Q$ decomposes the quandle into disjoint orbits. These orbits are the {\em components} (or {\em algebraic components}) of the quandle $Q$; a quandle is {\em connected} if it has only one component. We generalize the notion of an $n$-quandle by picking a different $n$ for each component of the quandle.

\begin{definition} \label{D:Nquandle}
Given a quandle $Q$ with $k$ ordered components, labeled from 1 to $k$, and a $k$-tuple of natural numbers $N = (n_1, \dots, n_k)$, we say $Q$ is an {\em $N$-quandle} if $x^{y^{n_i}} = x$ whenever $x \in Q$ and $y$ is in the $i$th component of $Q$. 
\end{definition}

Note that the ordering of the components in an $N$-quandle is very important; the relations depend intrinsically on knowing which component is associated with which number $n_i$. 

Given a presentation $\langle S \,|\, R\rangle$ of $Q$, a presentation of  the quotient $N$-quandle $Q_N$ is obtained by adding the relations $x^{y^{n_i}}=x$ for every pair of distinct generators $x$ and $y$, where $y$ is in the $i$th component of $Q$. An $n$-quandle is then the special case of an $N$-quandle where $n_i = n$ for every $i$. 

\begin{remark}
The families of $n$-quandles and $N$-quandles are similar to other families of quandles that have been studied in a purely algebraic context.  These include reductive quandles, locally reductive quandles and (more generally) quandles with orbit series conditions \cite{BCNW, PR, BS, JPZ}.  For example, a quandle is {\em $n$-locally reductive} if $y^{x^n} = x$ for every pair of elements $x$ and $y$. In the future, it would be interesting to study the results of adding these conditions to link quandles.
\end{remark}

\section{Link quandles}

If $L$ is an oriented knot or link in $\mathbb{S}^3$, then a presentation of its fundamental quandle, $Q(L)$, can be derived from a regular diagram $D$ of $L$ by a process similar to the Wirtinger algorithm \cite{JO}. We assign a quandle generator $x_1, x_2, \dots , x_n$ to each arc of $D$, then at each crossing introduce the relation $x_i=x_k^{ x_j}$ as shown in Figure~\ref{crossing}. It is easy to check that the three Reidemeister moves do not change the quandle given by this presentation so that the quandle is indeed an invariant of the oriented link.

\begin{figure}[h]
$$\includegraphics[height=1in]{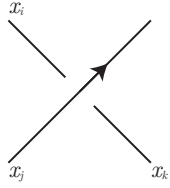}$$
\caption{The relation $x_i=x_k^{x_j}$ at a crossing.}
\label{crossing}
\end{figure}

If $n$ is a natural number, we can take the quotient $Q_n(L)$ of the fundamental quandle $Q(L)$ to obtain the fundamental $n$-quandle of the link. Hoste and Shanahan \cite{HS2} classified all pairs $(L,n)$ for which $Q_n(L)$ is finite (see Table \ref{linktable}).

Fenn and Rourke \cite{FR} observed that the components of the quandle $Q(L)$ are in bijective correspondence with the components of the link $L$, with each component of the quandle containing the generators of the Wirtinger presentation associated to the corresponding link component. So if we have a link $L$ of $k$ components, and label each component $c_i$ with a natural number $n_i$, we can let $N = (n_1, \dots, n_k)$ and take the quotient $Q_N(L)$ of the fundamental quandle $Q(L)$ to obtain the fundamental $N$-quandle of the link (this depends on the ordering of the link components). If $Q(L)$ has the Wirtinger presentation from a diagram $D$, then we obtain a presentation for $Q_N(L)$ by adding relations $x^{y^{n_i}} = x$ for each pair of distinct generators $x$ and $y$ where $y$ corresponds to an arc of component $c_i$ in the diagram $D$.

\begin{remark} \label{R:Nquandle}
It is worth observing that if $x_i^y = x_i$ for every {\em generator} $x_i$ of a quandle, then $x^y = x$ for every {\em element} $x$ of the quandle.  Say $x = x_1^{x_2x_3\dots x_m}$, where each $x_i$ is a generator.  Then
$$x^y = x_1^{x_2x_3\cdots x_m y} = x_1^{y(\bar{y}x_2 y)(\bar{y} x_3 y) \cdots (\bar{y} x_m y)} = (x_1^y)^{(x_2^y)(x_3^y) \cdots (x_m^y)} = x_1^{x_2x_3\dots x_m} = x.$$
We will use this fact when constructing Cayley graphs for $N$-quandles.
\end{remark}

As with $n$-quandles, we are interested in determining which links have finite $N$-quandles.  Inspired by Hoste and Shanahan \cite{HS2}, we conjecture these are the labeled links which are the singular locus for a spherical orbifold.  According to Dunbar \cite{DU}, these are the links in Tables \ref{linktable} and \ref{linktable2}.  The links in Table \ref{linktable} are exactly those with finite $n$-quandles; those in Table \ref{linktable2} have different labels on some components.  In the remainder of this paper, we will prove half of the Main Conjecture by proving that for each of these links and values of $N$, the $N$-quandle is finite.  We will do this by explicitly computing the quandles.

\begin{table}[htbp]
{$
\begin{array}{ccc}
\includegraphics[width=1.5in,trim=0 0 0 0,clip]{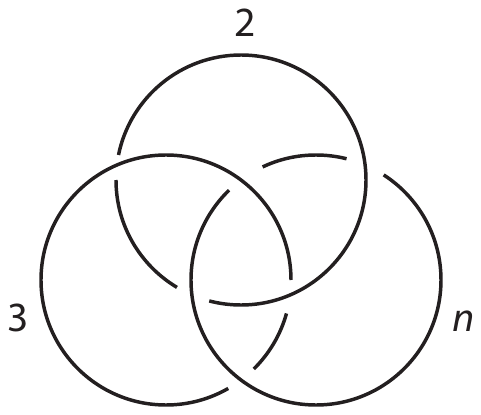}   & \hspace{.5in}& \includegraphics[width=1.25in,trim=0 0 0 0,clip]{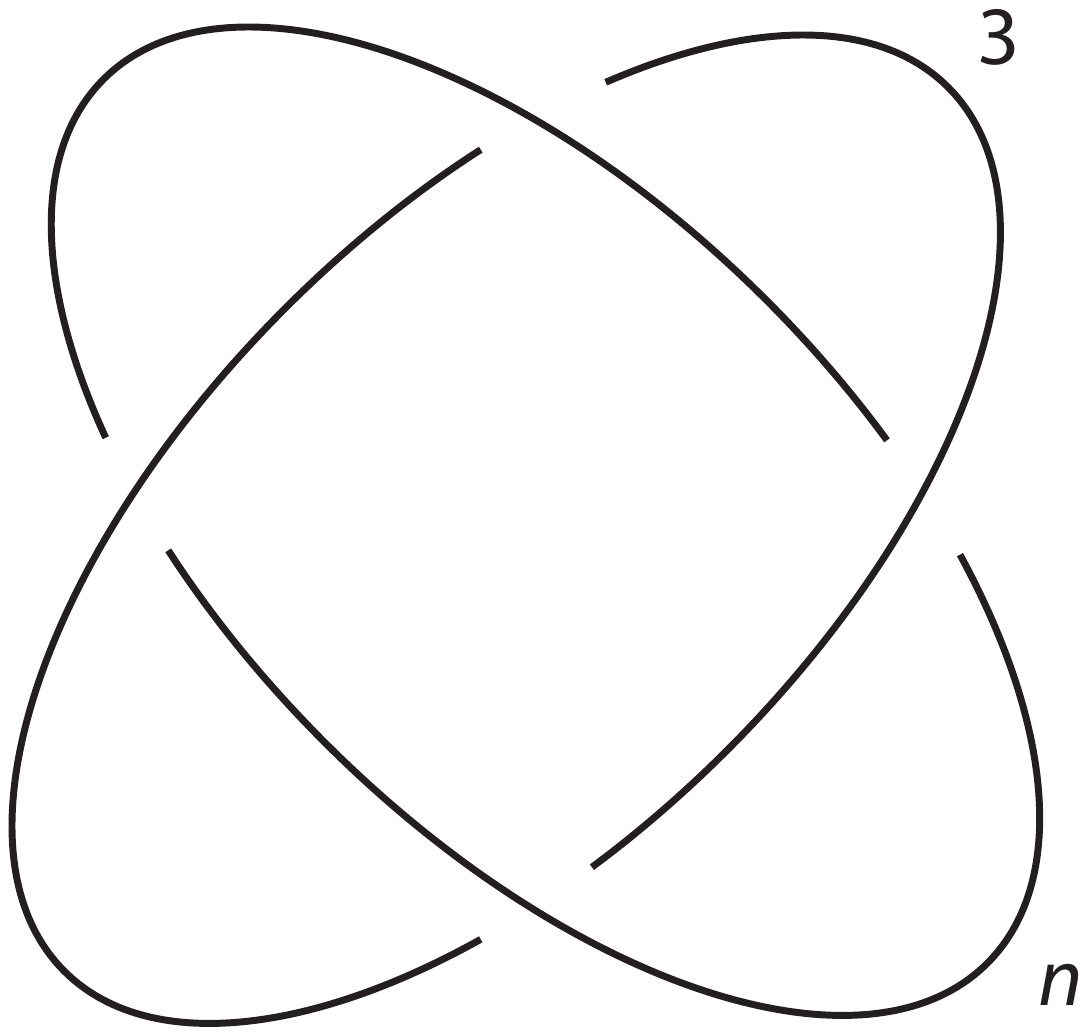}  \\
\scriptstyle L = T_{3,3};\ N = (2, 3, n);\ n = 3, 4, 5 && \scriptstyle L = T_{2,4}; N = (3, n);\ n=3, 4, 5 \\
\\
\includegraphics[width=1.25in,trim=0 0 0 0,clip]{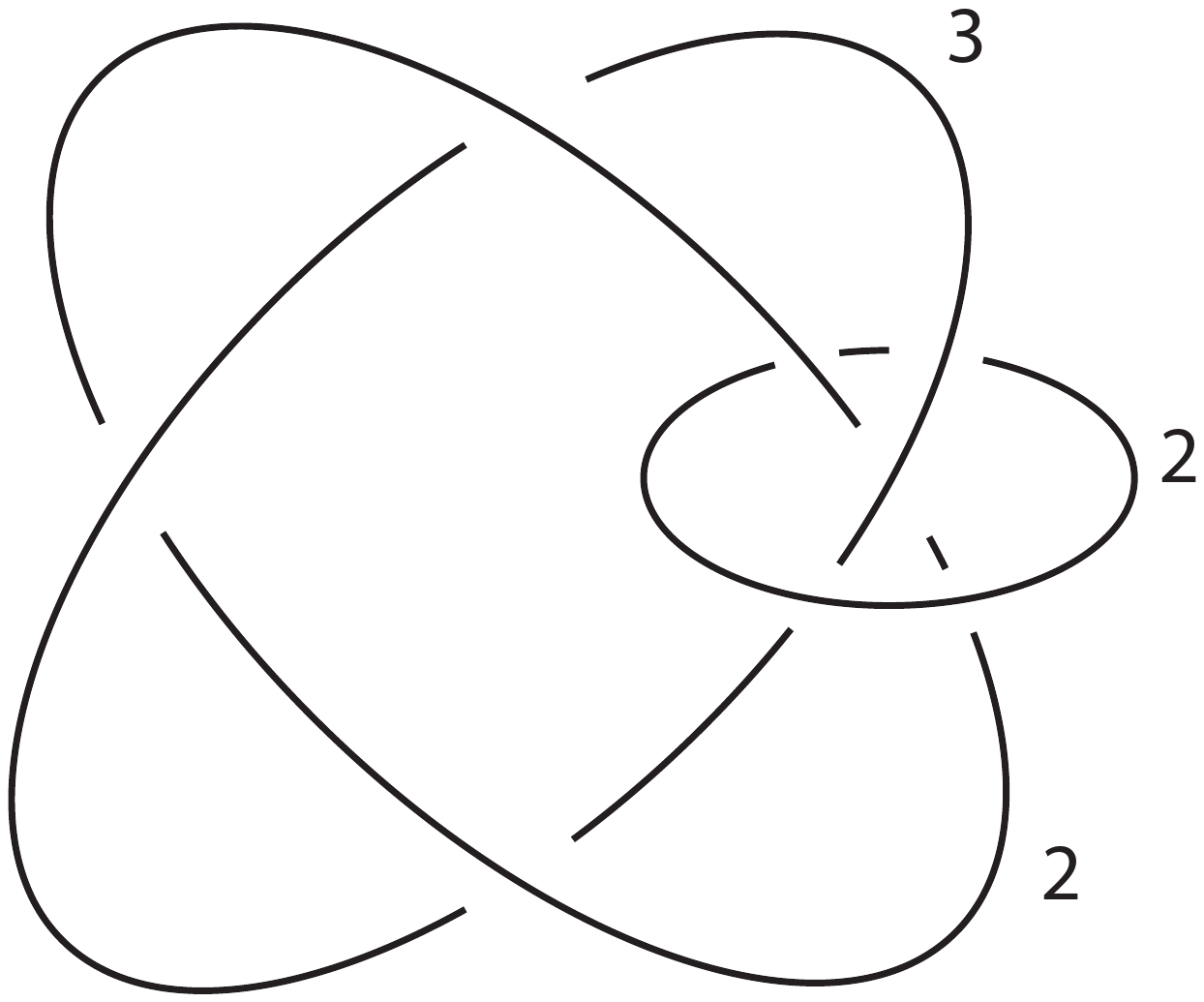} && \includegraphics[width=1.5in,trim=0 0 0 0,clip]{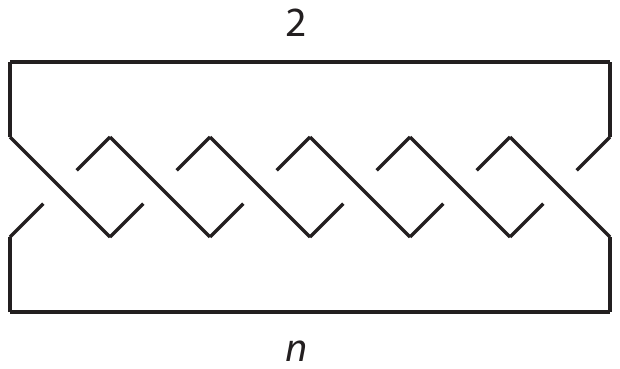}  \\
\scriptstyle L = T_{2,4} \cup C; N=(2,2,3) && \scriptstyle L = T_{2,6}; N = (2, n);\ n=3, 4, 5 \\
\\
 \includegraphics[width=1.75in,trim=0 0 0 0,clip]{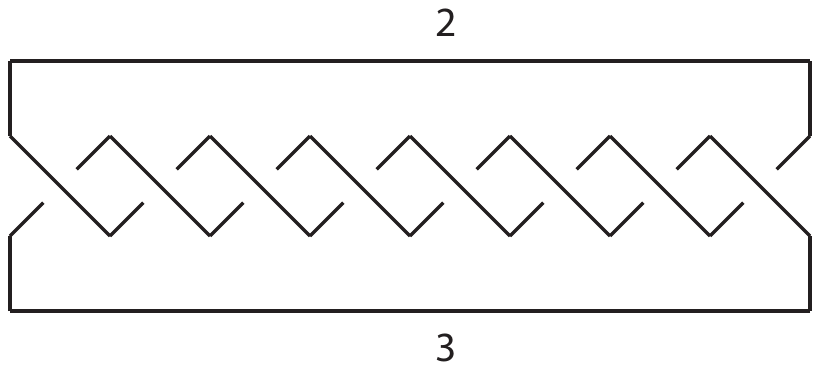}  && \includegraphics[width=2in,trim=0 0 0 0,clip]{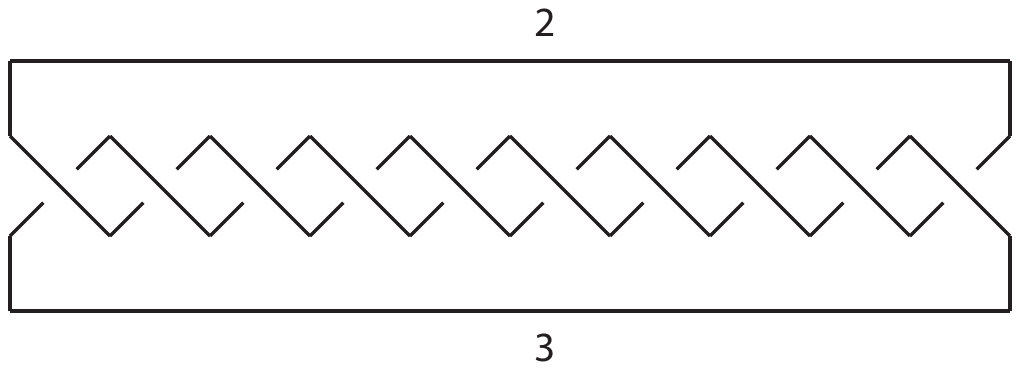} \\
\scriptstyle L = T_{2,8}; N = (2, 3) && \scriptstyle L = T_{2,10}; N = (2, 3) \\
\includegraphics[width=1.5in,trim=0 0 0 0,clip]{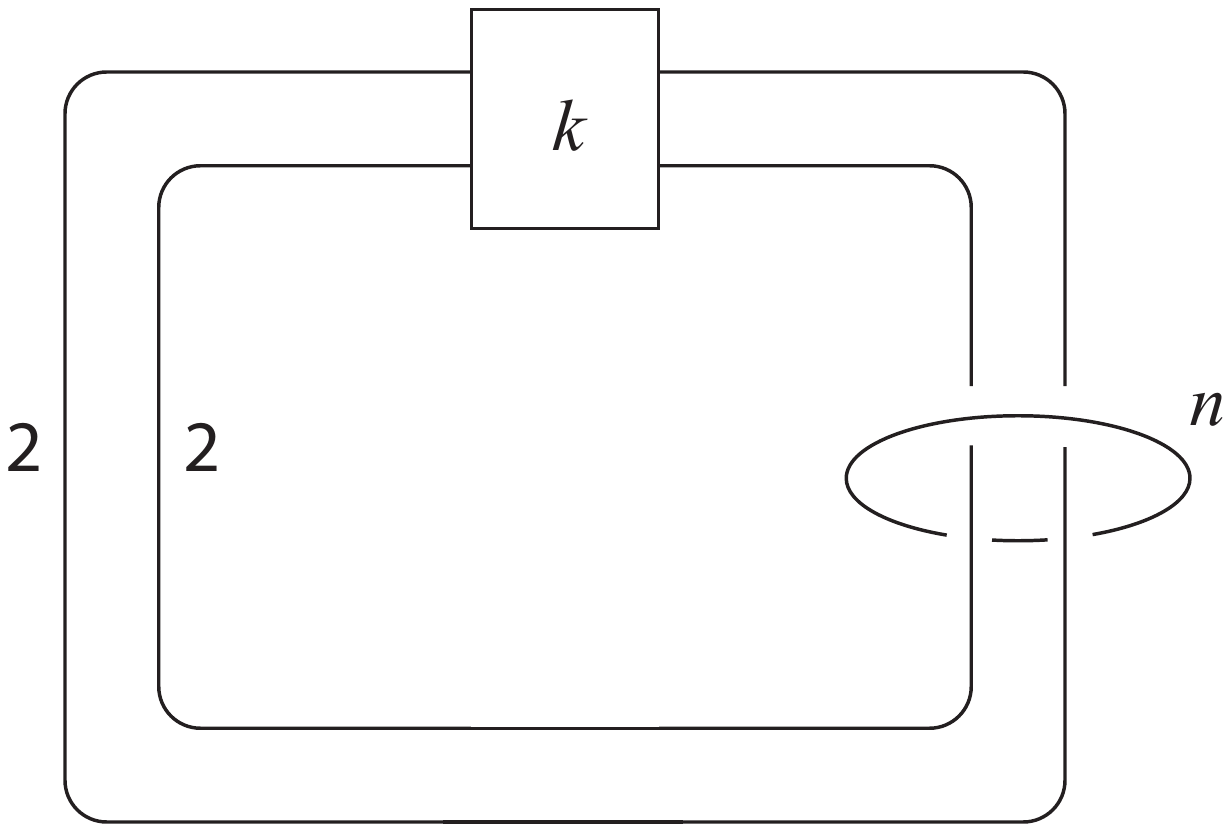}  && \includegraphics[width=1.5in,trim=0 0 0 0,clip]{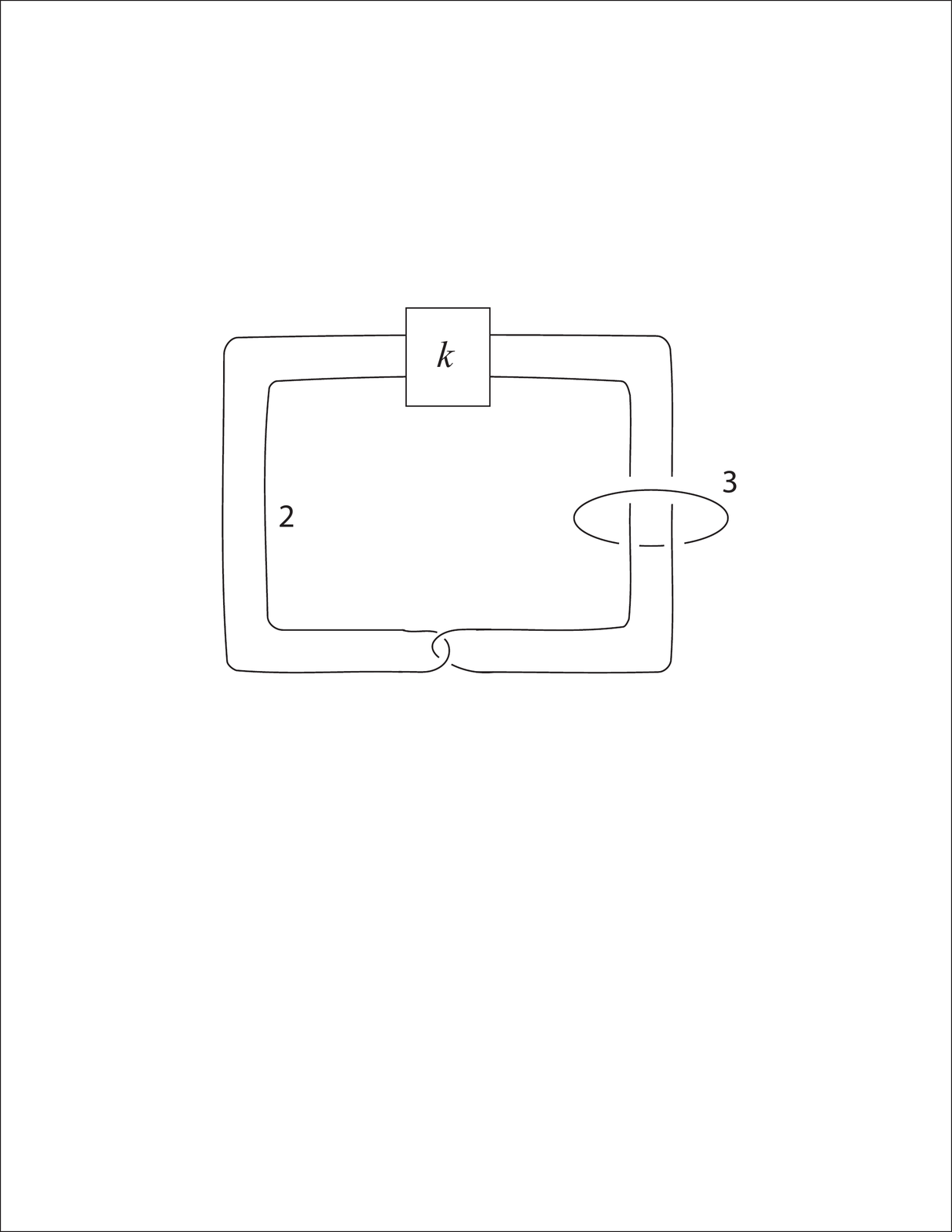}  \\
\scriptstyle L_k = T_{2,k} \cup C; N = (2, n)\ {\rm or}\ (2, 2, n);\ n > 1;\ k \neq 0 && M_k = T_k \cup C; \scriptstyle N = (2, 3)
\end{array}
$}
\caption{Links $L \in \mathbb{S}^3$ with finite $Q_N(L)$.}
\label{linktable2}
\end{table}

\section{Summary of results} \label{S:summary}

In Table \ref{quandletable} we list the cardinalities of all finite $N$-quandles (including $n$-quandles) for the links in Tables \ref{linktable} and \ref{linktable2}; except for the links in the last row of Table \ref{linktable}, which have not yet been determined. These cardinalities have been found by explicitly describing the Cayley graphs for the $N$-quandles. Some of these results were found in earlier papers \cite{CHMS, HS1, Me}; we have given the citations in the table. In most cases (all except the first two and the last four), we are considering a specific $N$-quandle for a specific link; the Cayley graphs were computed explicitly using {\em Mathematica}.  In section \ref{S:Cayley} we will describe the algorithm we use to compute the Cayley graphs. The presentations and Cayley graphs for these $N$-quandles are given in Section \ref{S:graphs}.  The {\em Mathematica} code used to produce them can be downloaded from the first author's website \cite{Me2}. (We are currently working to translate the program into Python; once we have done that we will post the Python code as well, and upload it to the arXiv along with this paper.)

\begin{table}[htbp]
{
\begin{tabular}{|c|c|c|c|}
\hline
Link $L$ & $N$ & $\vert Q_N(L) \vert$ & Reference \\ \hline
$T_{2,k}$ & (2) or (2,2) & $k$ & Crans, et. al. \cite{CHMS}  \\ \hline
$T_{2,k} \cup C$, $k \neq 0$ & $(2,n)$ or $(2,2,n)$, $n \geq 2$ & $2 + n\vert k\vert$ & Theorem \ref{T:QNLk} \\ \hline
\multirow{3}{*}{$T_{2,3}$} & (3) & 4 & \multirow{3}{*}{Crans, et. al. \cite{CHMS}} \\ \cline{2-3}
& (4) & 6 & \\ \cline{2-3}
& (5) & 12 &  \\ \hline
$T_{2,3} \cup B$ & (2,2) & 18 & Crans, et. al. \cite{CHMS} \\ \hline
\multirow{3}{*}{$T_{2,4}$} & (3,3) & 8 & \multirow{3}{*}{Section \ref{S:graphs}} \\ \cline{2-3}
& (3,4) & 14 & \\ \cline{2-3}
& (3,5) & 32 & \\ \hline
$T_{2,4} \cup C$ & (2,2,3) & 26 & Section \ref{S:graphs} \\ \hline
$T_{2,5}$ & (3) & 20 & Crans, et. al. \cite{CHMS} \\ \hline
\multirow{3}{*}{$T_{2,6}$} & (2,3) & 10 & \multirow{3}{*}{Section \ref{S:graphs}} \\ \cline{2-3}
& (2,4) & 18 & \\ \cline{2-3}
& (2,5) & 42 & \\ \hline
$T_{2,8}$ & (2,3) & 20 & Section \ref{S:graphs} \\ \hline
$T_{2,10}$ & (2,3) & 50 & Section \ref{S:graphs} \\ \hline
\multirow{4}{*}{$T_{3,3}$} & (2,2,2) & 6 & Crans, et. al. \cite{CHMS} \\ \cline{2-4}
& (2,3,3) & 14 & \multirow{3}{*}{Section \ref{S:graphs}} \\ \cline{2-3}
& (2,3,4) & 26 & \\ \cline{2-3}
& (2,3,5) & 62 & \\ \hline
$T_{3,4}$ & (2) & 12 & Crans, et. al. \cite{CHMS} \\ \hline
$T_{3,5}$ & (2) & 30 & Crans, et. al. \cite{CHMS} \\ \hline
$L_{p/q}$ & (2) or (2,2) & $q$ & Crans, et. al. \cite{CHMS} \\ \hline
$L_{k,p/q} \cup C$ & (2,2) or (2,2,2) & $2q(\vert kq-p\vert +1)$ & Mellor \cite{Me} \\ \hline
$L(1/2,1/2,p/q;k)$ & (2,2) or (2,2,2) & $2(q+1)\vert (k-1)q-p \vert$ & Hoste and Shanahan \cite{HS1} \\ \hline
$T_k \cup C$ & (2,3) & $18\vert 2k-1\vert + 8$ & Theorem \ref{T:QNMk} \\ \hline
\end{tabular}
}
\caption{Finite $N$-quandles of links.}
\label{quandletable}
\end{table}

The two families of links $L_k = T_{2,k}\cup C$ and $M_k = T_k \cup C$ require more analysis.  In Section \ref{SS:Lk} we prove

\begin{theorem}\label{T:QNLk}
Let $L_k = T_{2,k}\cup C$, and assume $n \geq 2$.
\begin{enumerate}
	\item If $k$ is odd, then $\vert Q_{(2,n)}(L_k) \vert = n\vert k\vert + 2$.
	\item If $k$ is even, then $\vert Q_{(2,2,n)}(L_k) \vert = n\vert k \vert + 2$.
\end{enumerate}
\end{theorem}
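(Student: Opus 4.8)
The plan is to compute $Q_N(L_k)$ directly from a Wirtinger-type presentation and then identify it with an explicit finite quandle of the claimed size. First I would fix a convenient diagram of $L_k = T_{2,k}\cup C$, drawing $T_{2,k}$ as the closure of the $2$-braid $\sigma_1^k$ and $C$ as the circle encircling the two parallel strands. Reading off the crossings and repeatedly re-associating with Lemma~\ref{leftassoc}, I would reduce to a small presentation: generators $a_0,\dots,a_{|k|-1}$ for the $|k|$ arcs of the torus link and a single class $c$ coming from $C$, with band relations of the schematic form $a_{i+1}=a_{i-1}^{\,a_i}$ (indices mod $|k|$) together with the two relations recording how $C$ passes over and under the band.

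Next I would impose the $N$-quandle relations. Every torus component carries the label $2$, so each point symmetry $S_{a_i}$ becomes an involution; on the generators this forces the dihedral relation $a_{i+1}=2a_i-a_{i-1}$, so the subquandle generated by the $a_i$ is the dihedral quandle $R_{|k|}$ of order $|k|$ (as in \cite{CHMS}, where $|Q_{(2)}(T_{2,k})|=|k|$). The component $C$ carries the label $n$, which makes $S_c$ satisfy $S_c^{\,n}=\mathrm{id}$; by Remark~\ref{R:Nquandle} this gives $x^{c^{\,n}}=x$ for \emph{every} element $x$. Since the quandle components biject with the link components (Fenn and Rourke, \cite{FR}), the $C$-orbit and the torus orbit(s) are automatically disjoint, so I may count them separately.

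I would then count the two kinds of components. For the $C$-component I would show that, because the torus symmetries are involutions acting on $c$ through the dihedral structure, consecutive $c^{\,a_i}$ take only two values, so the orbit of $c$ consists of exactly two elements --- this is the source of the ``$+2$''. For the torus part I would prove by re-association that every element reduces to the canonical form $a_i^{\,c^{\,j}}$ with $0\le i<|k|$ and $0\le j<n$, giving at most $n|k|$ elements. The parity of $k$ enters in exactly one place: when $k$ is odd the torus link is a knot, the canonical forms lie in a single component, and the label tuple is $(2,n)$; when $k$ is even, $R_{|k|}$ splits into its even- and odd-index halves, the torus link has two components, and the tuple is $(2,2,n)$. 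In both cases the total is $n|k|+2$.

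The main obstacle is the lower bound: showing the $n|k|+2$ candidate elements are genuinely distinct, i.e. that the presentation does not collapse further and that all $n$ powers $a_i,a_i^{\,c},\dots,a_i^{\,c^{\,n-1}}$ really are distinct. To settle this I would construct an explicit model quandle on the set $(\Z_{|k|}\times\Z_n)\sqcup\{c_0,c_1\}$, defining the operations so that the first coordinate behaves like $R_{|k|}$, the symmetry $S_c$ acts as $+1$ on the second coordinate (realizing order exactly $n$), and $\{c_0,c_1\}$ is the two-element $C$-orbit. After verifying axioms A1--A3 and the $N$-quandle relations for this model, the homomorphism sending each generator to its counterpart is surjective onto all $n|k|+2$ elements, which together with the upper bound from the canonical form completes the count. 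Checking that the proposed operations satisfy the self-distributivity axiom A3 and mesh correctly with the dihedral action is the delicate, computational heart of the argument, and is exactly the content encoded by the Cayley graph of this $N$-quandle.
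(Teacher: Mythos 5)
Your overall architecture --- a normal form $a_i^{c^j}$ giving an upper bound of $n|k|$ elements plus a two-element $C$-orbit, and then an explicit model quandle to rule out further collapse --- is a legitimate alternative to the paper's Winker-style Cayley graph computation, and your predicted vertex set does match the paper's. But the model you specify is wrong, and this is a genuine gap rather than a deferred verification. In the true quandle the $\Z_n$ coordinate and the dihedral coordinate do not decouple: reading the paper's Cayley graph (Lemmas \ref{L:ends} and \ref{L:sides}), the symmetry $S_c$ shifts the $c$-coordinate by $+1$ on even radial levels but by $-1$ on odd ones, and the torus point symmetries negate the $c$-coordinate at the wraparound edges ($x_{i,0}^a = x_{n-i,0}$ and $x_{i,2t}^b = x_{n-2-i,2t}$). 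With your operations --- $S_c$ acting uniformly as $+1$ on the second coordinate and the torus symmetries acting only dihedrally on the first --- the clasp relations of $L_k$ fail. For $k=2t+1$ odd, writing $a=(0,0)$ and $b=(p_b,q_b)$, the relation $a^{(ba)^t b c} = b$ forces $q_b \equiv 1 \pmod n$ while $b^{(ab)^t c} = a$ forces $q_b \equiv -1 \pmod n$, so your model only receives a surjection when $n=2$ (the one case already settled in \cite{CHMS}); for $k=2t$ even, $a^{(ba)^{t-1}bc}=a$ forces $1 \equiv 0 \pmod n$, so it fails for every $n \geq 2$. Since the candidate quandle does not satisfy the defining relations, the homomorphism you invoke does not exist and your lower bound collapses in exactly the cases this theorem is about.

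A related unjustified step is the claim that the $N$-quandle relations make the torus generators generate a dihedral $R_{|k|}$ inside $Q_N(L_k)$: the chain $a_{i+1}=a_{i-1}^{a_i}$ does not close up modulo $|k|$, because the arcs of the torus component are interrupted where $C$ crosses the band, so the closing relations carry a $c$ (in the paper's presentation, $a^{(ba)^t bc}=b$ and $b^{(ab)^t c}=a$). Indeed, in the true quandle the subquandle generated by $a$ and $b$ escapes any single $c$-level: for instance $x_{0,2t}^b = x_{n-2,2t}$ by Lemma \ref{L:ends}, a new level whenever $n \geq 3$. A corrected model must be a twisted product in which the torus symmetries act on the $\Z_n$ factor by position-dependent sign changes; verifying axiom A3 and the link relations for that twisted structure, together with the commutation identities needed for your normal-form upper bound, is precisely the content of the paper's Lemmas \ref{L:r1} through \ref{L:secondary}. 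So the skeleton of your argument is repairable, but as written the central object of the lower-bound half does not exist.
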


In Section \ref{SS:Mk} we prove

\begin{theorem}\label{T:QNMk}
Let $M_k = T_k \cup C$.  Then $\vert Q_{(2,3)}(M_k) \vert = 18\vert 2k-1\vert + 8$.
\end{theorem}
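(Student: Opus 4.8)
The plan is to follow the Cayley-graph strategy used for the other links in Table~\ref{quandletable}, but to carry it out uniformly in the parameter $k$ rather than for a single diagram. First I would fix a standard diagram of $M_k = T_k \cup C$ in which the $k$ twists of $T_k$ occur in one twist region and $C$ is the surrounding curve, and read off a Wirtinger-style presentation of the fundamental quandle $Q(M_k)$, assigning a generator to each arc and the relation $x_i = x_k^{x_j}$ at each crossing (Figure~\ref{crossing}). I would then pass to $Q_{(2,3)}(M_k)$ by adjoining the $N$-quandle relations $x^{y^{n_i}}=x$, so that the point symmetries coming from one component are involutions and those coming from the other have order dividing $3$; by Remark~\ref{R:Nquandle} it suffices to impose these relations with $x$ a generator.

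Next I would reduce every element to a normal form. Using Lemma~\ref{leftassoc} to re-associate, each element is an equivalence class $a^w$ with $a$ a generator and $w$ a word in the generators; the crossing relations together with the order-$2$ and order-$3$ relations let me rewrite $w$ and collapse most of the arc generators, leaving a finite candidate set $\mathcal{E}$ of normal forms organized by the two algebraic components. I expect the orbit of the axis $C$ to stabilize to a bounded set contributing the constant term $8$, while the orbit of the twisted component grows linearly with the twisting and contributes $18\lvert 2k-1\rvert$; since passing from $k$ to $k+1$ changes $\lvert 2k-1\rvert$ by $2$, this predicts that each additional full twist appends a fixed block of $36$ vertices, which suggests an inductive description.

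I would then build the Cayley graph of $Q_{(2,3)}(M_k)$ on $\mathcal{E}$: for each generator $g$ draw the edge $x \mapsto x^g$, and verify that $\mathcal{E}$ is closed under all of these point symmetries. Closure shows that the normal forms already exhaust the quandle, giving the upper bound $\lvert Q_{(2,3)}(M_k)\rvert \le \lvert \mathcal{E}\rvert = 18\lvert 2k-1\rvert + 8$. I anticipate that the cleanest way to organize this is an induction on $k$, proving that the graph for $M_{k+1}$ is obtained from that for $M_k$ by splicing in the $36$-vertex block, with separate base cases handling small $k$ and the two signs of $2k-1$. This uniform closure argument is the step I expect to be the main obstacle: unlike the single-diagram computations done by machine, the diagram here grows with $k$, so I must show both that the graph closes up for every $k$ and that the axis orbit really stabilizes rather than continuing to grow.

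Finally, to obtain the matching lower bound and rule out further collapse, I would exhibit an explicit quandle $Q'$ of cardinality $18\lvert 2k-1\rvert + 8$ --- concretely, the set $\mathcal{E}$ equipped with the operations read off from the Cayley graph, or a realization of the point symmetries as explicit permutations of $\mathcal{E}$ --- and check directly that the quandle axioms A1--A3 and all of the presentation relations hold in $Q'$. Since $Q_{(2,3)}(M_k)$ is the universal quandle on that presentation, $Q'$ is a quotient of it, and because the normal forms have pairwise distinct images in $Q'$ we get $\lvert Q_{(2,3)}(M_k)\rvert \ge \lvert \mathcal{E}\rvert$. Combining the two bounds yields $\lvert Q_{(2,3)}(M_k)\rvert = 18\lvert 2k-1\rvert + 8$.
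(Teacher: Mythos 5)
Your overall strategy --- a uniform-in-$k$ Cayley-graph enumeration, with closure of a candidate set giving the upper bound and an explicit model giving the lower bound --- matches the paper's in outline, but two of your concrete steps conceal genuine gaps. First, you never identify the relation that actually produces the number $18\vert 2k-1\vert$. The paper first collapses the growing Wirtinger presentation to a \emph{fixed} three-generator presentation $\langle a,b,c \mid c^{ba}=c,\ a^{ca\bar{c}a}=a^{\bar{c}ac},\ a^{\bar{c}ac}=b^{(ab)^{k-1}},\ x^{a^2}=x^{b^2}=x^{c^3}=x\rangle$, using the twist-region labeling of Lemma~\ref{L:halftwists} to absorb the $k$ crossings of the twist region into the single exponent $k-1$, and then proves the periodicity relations $a^{(ba)^{3(2k-1)}}=a$ and $b^{(ab)^{3(2k-1)}}=b$ (Lemma~\ref{L:Mksides}); these are what bound the large orbit by $6\cdot 3(2k-1)=18(2k-1)$ vertices, and their derivation is a long, nontrivial computation combining $R_2$, $R_3$, $x^{c^3}=x$, and the commutation relations of Lemma~\ref{L:r1}. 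Saying that the relations ``let me rewrite $w$ and collapse most of the arc generators, leaving a finite candidate set $\mathcal{E}$'' assumes exactly what has to be proved; without an analogue of $P_1$ and $P_2$ your set $\mathcal{E}$ has no reason to be finite, let alone of the stated size.

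Second, the induction on $k$ by splicing in a $36$-vertex block is unlikely to work as stated. In the completed graph the $a$- and $b$-edges joining the $x$-side to the $y$-side obey formulas such as $x_{2i,0}^a = y_{2k+2i-2,0}$ (Lemma~\ref{L:struts}): every cross edge is shifted by $2k$, and all subscripts are read modulo $6(2k-1)$, so passing from $M_k$ to $M_{k+1}$ re-indexes the entire component rather than modifying it locally; there is no stable subgraph into which a fixed block is spliced. The paper avoids induction on $k$ entirely, treating all $k$ at once via the closed-form presentation, explicit subscript arithmetic, and a vertex-by-vertex verification of the secondary relations, and then reduces $k\le 0$ to $k>0$ through the isomorphism $Q_{(2,3)}(M_k)\cong Q_{(2,3)}(M_{\vert k\vert+1})$ (Remark~\ref{R:Mk,k<0}) rather than through separate sign cases. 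One smaller remark: your final step of building a model quandle $Q'$ on $\mathcal{E}$ to obtain the lower bound is sound but redundant --- once Winker's method terminates with all primary and secondary relations verified at every vertex and no further collapsing, the resulting graph \emph{is} the Cayley graph of the presented quandle, so the count is exact; this completeness of the Todd--Coxeter-style procedure is exactly what the paper invokes from Winker and Hoste--Shanahan.
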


This completes the justification of Table \ref{quandletable}, and proves one direction of the Main Conjecture.

\begin{theorem}\label{T:oneway}
If $L$ is a $k$-component link which is the singular locus of a spherical orbifold with underlying space $\mathbb{S}^3$, with component $i$ labeled $n_i$, then the quandle $Q_{(n_1,\dots,n_k)}(L)$ is finite.
\end{theorem}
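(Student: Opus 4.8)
The plan is to deduce finiteness of $Q_N(L)$, where $N = (n_1,\dots,n_k)$, from finiteness of its inner automorphism group, and to identify that group with a quotient of the orbifold fundamental group of the given spherical orbifold $\mathcal{O}$. Write $G = \pi_1(\mathbb{S}^3 \setminus L)$ with meridional generators $m_1,\dots,m_k$, one for each component, and let $\Gamma = \pi_1^{\mathrm{orb}}(\mathcal{O})$. Standard orbifold theory gives $\Gamma \cong G/\langle\langle m_i^{n_i}\rangle\rangle$, the quotient of $G$ by the normal closure of the $n_i$-th powers of the meridians. Since $\mathcal{O}$ is spherical with underlying space $\mathbb{S}^3$, we may write $\mathcal{O} = \mathbb{S}^3/\Gamma$ for a finite group $\Gamma$ of isometries; the finiteness of $\Gamma$ is the geometric input I would rely on.

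First I would show that $\mathrm{Inn}(Q_N(L))$ is a quotient of $\Gamma$. Starting from a Wirtinger presentation, each crossing relation $x_i = x_k^{x_j}$ translates, at the level of point symmetries, into the conjugation relation $S_{x_i} = S_{x_j} S_{x_k} S_{x_j}^{-1}$; this follows from the re-association in Lemma \ref{leftassoc}, which gives $S_{a^b} = S_b S_a S_b^{-1}$. These are exactly the Wirtinger relations defining $G$ in the generators $S_{x_j}$, so $\mathrm{Inn}(Q(L))$ is a quotient of $G$. Passing to the $N$-quandle, the added relations $x^{y^{n_i}} = x$ for $y$ in component $i$ force $S_{x_j}^{n_i} = \mathrm{id}$ whenever $x_j$ lies in component $i$: by Remark \ref{R:Nquandle} the relation holds for every element $x$, not merely for generators, which is precisely the assertion that the automorphism $S_{x_j}^{n_i}$ is the identity. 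Hence $\mathrm{Inn}(Q_N(L))$ satisfies all the defining relations of $\Gamma$ and is therefore a quotient of $\Gamma$, so it is finite.

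To finish, I would recall from Fenn and Rourke \cite{FR} that $Q(L)$ has exactly $k$ algebraic components, and note that a surjective quandle homomorphism cannot increase the number of components, so $Q_N(L)$ has at most $k$ of them. Each component is by definition a single orbit of $\mathrm{Inn}(Q_N(L))$, hence in bijection with a coset space $\mathrm{Inn}(Q_N(L))/\mathrm{Stab}$, which is finite because $\mathrm{Inn}(Q_N(L))$ is finite. A union of finitely many finite orbits is finite, and in fact $|Q_N(L)| \leq k \cdot |\mathrm{Inn}(Q_N(L))| < \infty$.

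The step I expect to be the main obstacle is the clean identification of the inner automorphism group as a quotient of $\Gamma$ — in particular, verifying that the $N$-quandle relations contribute exactly the meridian torsion relations and nothing spurious, together with the orbifold computation $\Gamma \cong G/\langle\langle m_i^{n_i}\rangle\rangle$ and the implication that sphericity makes $\Gamma$ finite. An alternative, more in keeping with the explicit computations of this paper, would be to invoke Dunbar's classification \cite{DU}: the labeled links arising as singular loci of spherical orbifolds with underlying space $\mathbb{S}^3$ are precisely those in Tables \ref{linktable} and \ref{linktable2}, so finiteness could instead be read off case by case from Table \ref{quandletable}, using Hoste and Shanahan \cite{HS2} for the pure $n$-quandles of Table \ref{linktable} and Theorems \ref{T:QNLk} and \ref{T:QNMk} for the two infinite families.
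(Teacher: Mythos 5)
Your proposal is correct, but your primary argument takes a genuinely different route from the paper's proof. The paper establishes Theorem~\ref{T:oneway} exactly along the lines of your closing alternative: Dunbar's classification \cite{DU} identifies the labeled links occurring as singular loci of spherical orbifolds with underlying space $\mathbb{S}^3$ as precisely those in Tables~\ref{linktable} and~\ref{linktable2}; finiteness for Table~\ref{linktable} is quoted from \cite{HS2} (with explicit quandles from \cite{CHMS, HS1, Me}), and the Table~\ref{linktable2} cases are settled one at a time by running Winker's algorithm to produce finite Cayley graphs --- the sporadic cases in Section~\ref{S:graphs} and the infinite families $L_k$ and $M_k$ via Theorems~\ref{T:QNLk} and~\ref{T:QNMk}. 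Your main route is instead uniform and computation-free, and it does go through: the assignment $x_j \mapsto S_{x_j}$ kills the Wirtinger relators because $S_{a^b} = S_b S_a S_b^{-1}$ (a consequence of Lemma~\ref{leftassoc}), the $N$-quandle relations force $S_{x_j}^{n_i} = \mathrm{id}$, and since every point symmetry $S_{x^w}$ is conjugate to one at a generator, the induced homomorphism $\Gamma = \pi_1^{\mathrm{orb}}(\mathcal{O}) \cong G/\langle\langle m_i^{n_i}\rangle\rangle \to \mathrm{Inn}(Q_N(L))$ is onto; sphericity makes $\Gamma$ finite, $Q(L)$ has exactly $k$ components \cite{FR}, a surjective quandle map cannot increase the number of orbits, and so $|Q_N(L)| \le k\,|\Gamma| < \infty$. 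The two details you flagged as potential obstacles are in fact fine: imposing $x_j^{n_i} = 1$ for every arc-generator of component $i$ adds nothing beyond one meridian power per component, since all meridians of a given component are conjugate in $G$, so nothing spurious enters; and Remark~\ref{R:Nquandle} (equivalently, conjugacy of point symmetries) guarantees the $N$-quandle relations hold at all elements rather than just generators. The trade-off between the two approaches is clear: yours proves finiteness for every spherical orbifold at once, with no appeal to Dunbar's list, but yields only the coarse bound $k|\Gamma|$; the paper's computational route --- the actual content of Sections~\ref{S:Cayley}--\ref{SS:Mk} --- delivers the exact cardinalities and Cayley graphs recorded in Table~\ref{quandletable}. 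Note finally that your argument is close in spirit to the quandle/orbifold-group relationship of Winker \cite{WI} and \cite{HS2} that the paper says would be needed for the unproved converse, but it would not help there: finiteness of the quotient $\mathrm{Inn}(Q_N(L))$ says nothing about finiteness of $\Gamma$ itself.
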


\section{Computing Cayley graphs} \label{S:Cayley}

Given a presentation of a quandle, one can try to systematically enumerate its elements and simultaneously produce a Cayley graph of the quandle. Such a method was described in a graph-theoretic fashion by Winker in \cite{WI}. The method is similar to the well-known Todd-Coxeter process for enumerating cosets of a subgroup of a group \cite{TC} and has been extended to racks by Hoste and Shanahan \cite{HS3}. (A rack is more general than a quandle, requiring only axioms A2 and A3.) We provide a brief description of Winker's method applied to the $N$-quandle of a link.  Suppose $Q_N(L)$ is presented as
$$Q_N(L)=\left\langle x_1, x_2, \dots, x_g \, \mid\, x_{j_1}^{w_1}=x_{k_1}, \dots, x_{j_r}^{w_r}=x_{k_r}, \left\{x_i^{x_j^{n_j}} = x_i\right\}_{i, j = 1}^g \right\rangle,$$
where each $w_i$ is a word in $\{x_1,\dots , x_g, \overline{x_i}, \dots, \overline{x_g}\}$, and $n_j$ is the label on the quandle component containing $x_j$. As noted in Remark \ref{R:Nquandle}, the set of relations $\left\{x_i^{x_j^{n_j}}= x_i\right\}_{i = 1}^g$ implies $x^{x_j^{n_j}} = x$ for any element $x$ of the quandle. For convenience, throughout this paper presentations of $N$-quandles will list the single relation $x^{x_j^{n_j}} = x$ in place of $\left\{x_i^{x_j^{n_j}}= x_i\right\}_{i = 1}^g$; $x$ should be understood to be any element of the quandle. 

If $y$ is any element of the quandle, then it follows from the relation $x_{j_i}^{w_i}=x_{k_i}$ and Lemma~\ref{leftassoc} that $y^{\overline{w}_i x_{j_i}w_i}=y^{x_{k_i}}$, and so
$$y^{\overline{w}_i x_{j_i} w_i \overline{x}_{k_i}}=y \text{ for all } y \text{ in } Q_N(L).$$

Winker calls this relation the {\it secondary relation}  associated to the {\it primary relation} $x_{j_i}^{w_i}=x_{k_i}$. We also consider relations of the form $y^{ x_j^{n_j} }=y$ for all $y$ and $1 \le j \le g$. These relations are equivalent to the secondary relations of the $N$-quandle relations \cite{HS2}.   

Winker's method now proceeds to build the Cayley graph associated to the presentation as follows:
  
\begin{enumerate}
\item Begin with $g$ vertices labeled $x_1,x_2, \dots, x_g$ and numbered $1,2, \dots,g$. 
\item Add an oriented loop at each vertex $x_i$ and label it $x_i$. (This encodes the axiom A1.)
\item For each value of $i$ from $1$ to $r$, {\em trace} the primary relation $x_{j_i}^{w_i}=x_{k_i}$ by introducing new vertices and oriented edges as necessary to create an oriented path from $x_{j_i}$ to $x_{k_i}$ given by $w_i$. Consecutively number (starting from $g+1$) new vertices in the order they are introduced.  Edges are labelled with their corresponding generator and oriented to indicate whether $x_i$ or $\overline x_i$ was traversed. 
\item Tracing a relation may introduce edges with the same label and same orientation into or out of a shared vertex. We identify all such edges, possibly leading to other identifications. This process is called {\it collapsing} and all collapsing is carried out before tracing the next relation. 
\item Proceeding in order through the vertices, trace and collapse each $N$-quandle relation $y^{x_j^{n_j}}=y$  and each secondary relation (in order). All of these relations are traced and collapsed at a vertex before proceeding to the next vertex.
\end{enumerate}

The method will terminate in a finite graph if and only if the $N$-quandle is finite. The reader is referred to Winker~\cite{WI} and Hoste and Shanahan \cite{HS3}  for more details.

\section{Cayley Graphs for specific quandles} \label{S:graphs}

This section contains the quandle presentations and Cayley graphs for the $N$-quandles discussed in Section \ref{S:summary} which are not members of infinite families (and which were not computed in \cite{CHMS}).  In the graphs, given generators $a$, $b$ and $c$, solid lines connect $x$ to $x^a$, dashed lines connect $x$ to $x^b$, and dotted lines connect $x$ to $x^c$ (if there is a third generator).  If $x^a = x^{\bar{a}}$, then the edge is not oriented (and similarly with the other generators). In the quandle presentations ``$x$" stands for an arbitrary element of the quandle (or, without loss of generality, for any of the generators).

\subsection{$T_{2,4}$}

\parbox{5in}{$Q_{(3,4)}(T_{2,4}) = \langle a, b \mid a^{bab} = a,\ b^{aba} = b,\ x^{a^3} = x^{b^4} = x \rangle$

$$\scalebox{.5}{\includegraphics{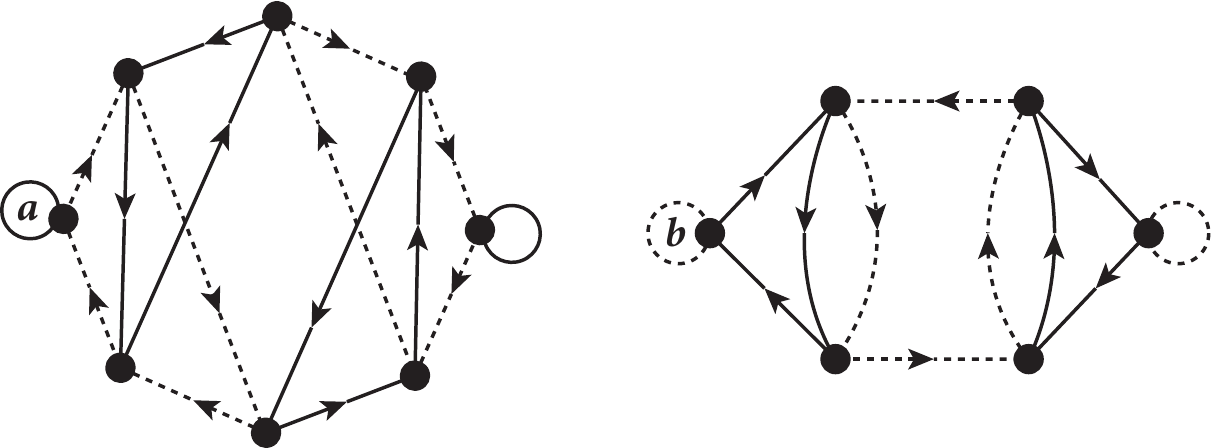}}$$}

\noindent\parbox{5in}{$Q_{(3,5)}(T_{2,4}) = \langle a, b \mid a^{bab} = a,\ b^{aba} = b,\ x^{a^3} = x^{b^5} = x \rangle$

$$\scalebox{.5}{\includegraphics{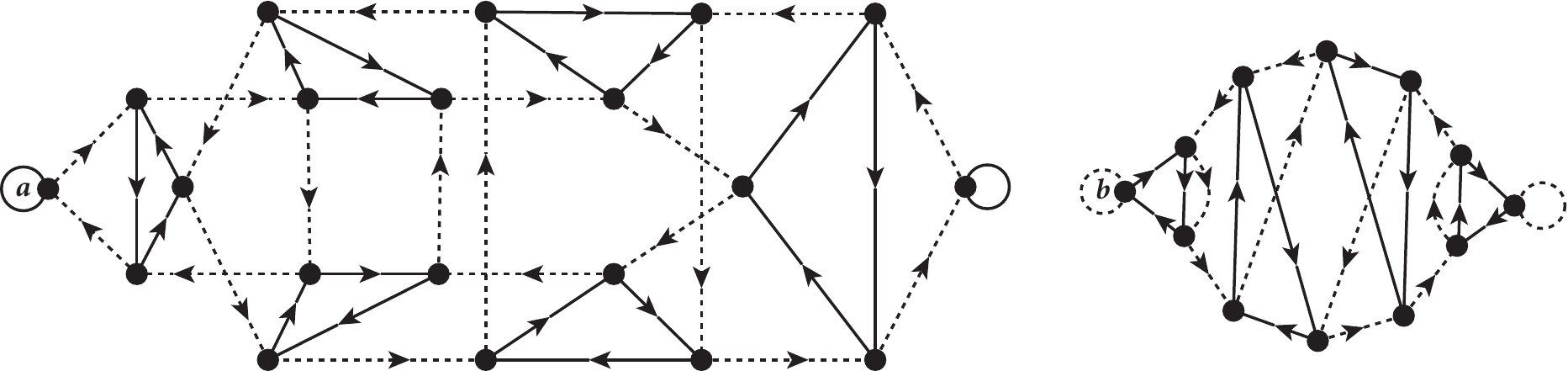}}$$}

\subsection{$T_{2,4}\cup C$}

\parbox{5in}{$Q_{(2,3,2)}(T_{2,4} \cup C) = \langle a, b, c \mid a^{babc} = a,\ b^{ababc} = b,\ c^{ab} = c,\ x^{a^2} = x^{b^3} = x^{c^2} = x \rangle$

$$\scalebox{.5}{\includegraphics{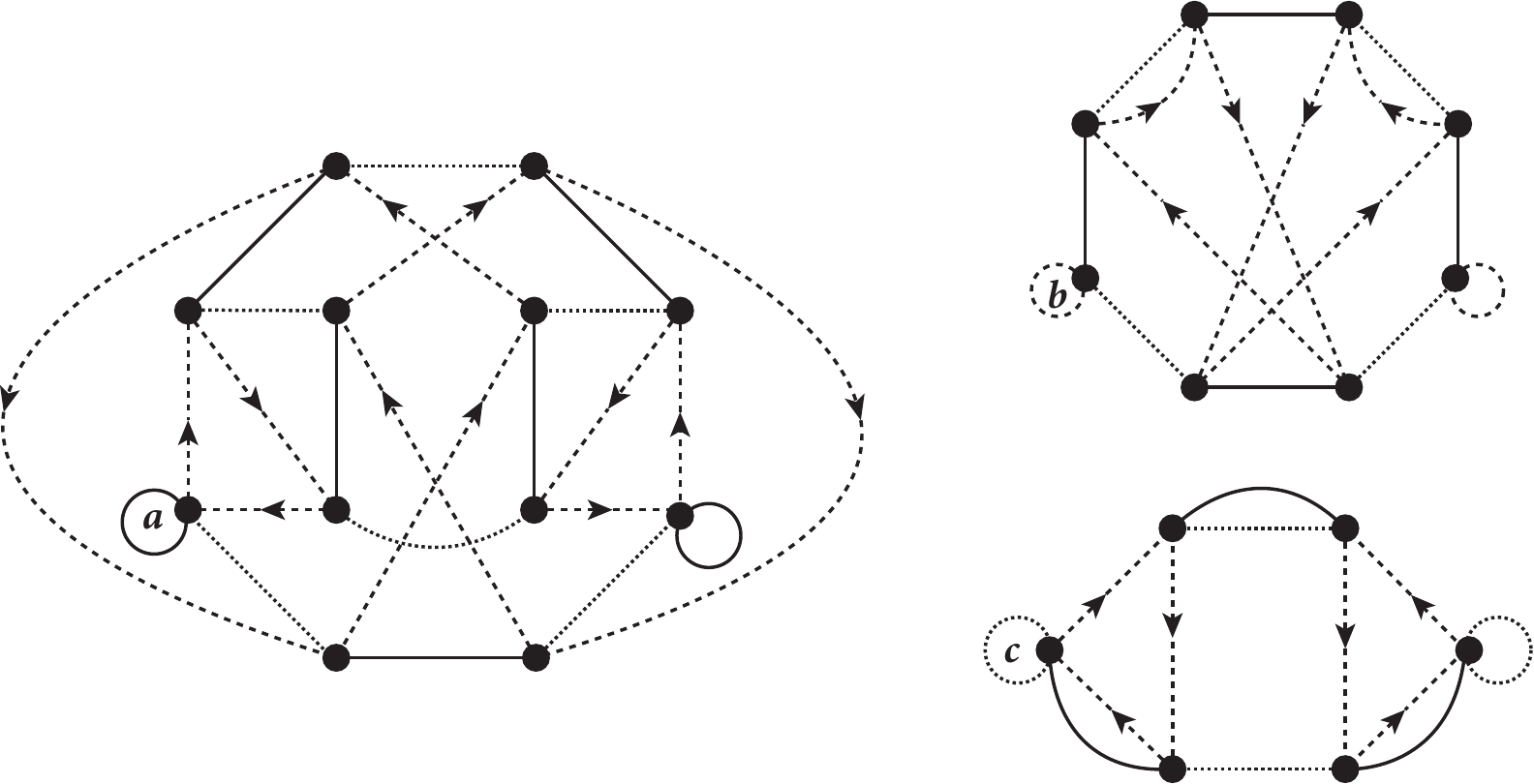}}$$}

\subsection{$T_{2,6}$}

\noindent \parbox{5in}{$Q_{(2,3)}(T_{2,6}) = \langle a, b \mid a^{babab} = a,\ b^{ababa} = b,\ x^{a^2} = x^{b^3} = x \rangle$

$$\scalebox{.5}{\includegraphics{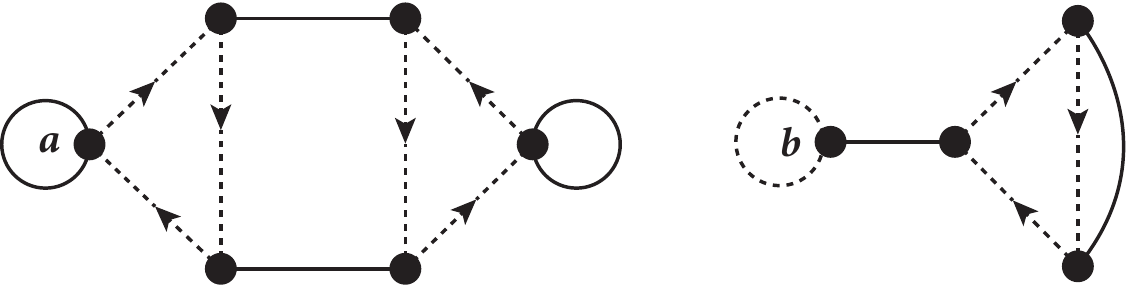}}$$}

\noindent \parbox{5in}{$Q_{(2,4)}(T_{2,6}) = \langle a, b \mid a^{babab} = a,\ b^{ababa} = b,\ x^{a^2} = x^{b^4} = x \rangle$

$$\scalebox{.5}{\includegraphics{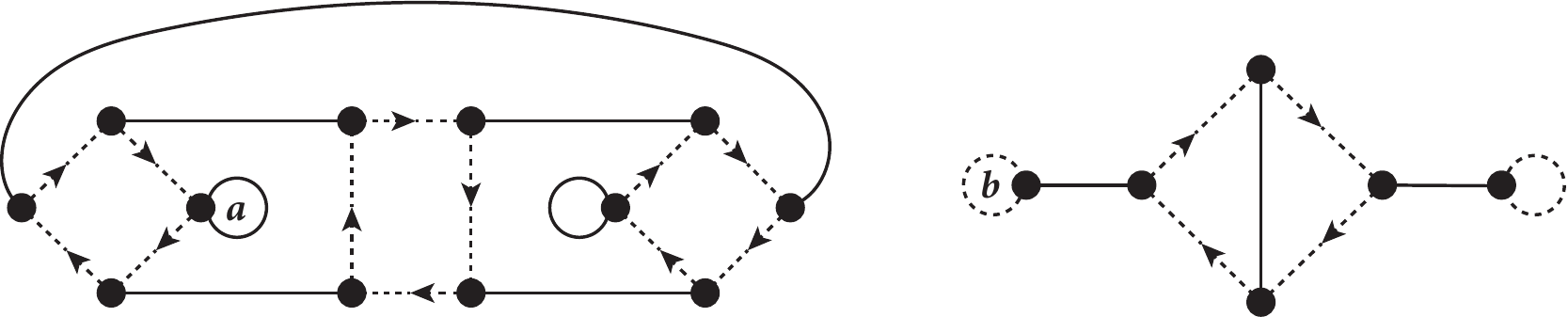}}$$}

\noindent \parbox{5in}{$Q_{(2,5)}(T_{2,6}) = \langle a, b \mid a^{babab} = a,\ b^{ababa} = b,\ x^{a^2} = x^{b^5} = x \rangle$

$$\scalebox{.5}{\includegraphics{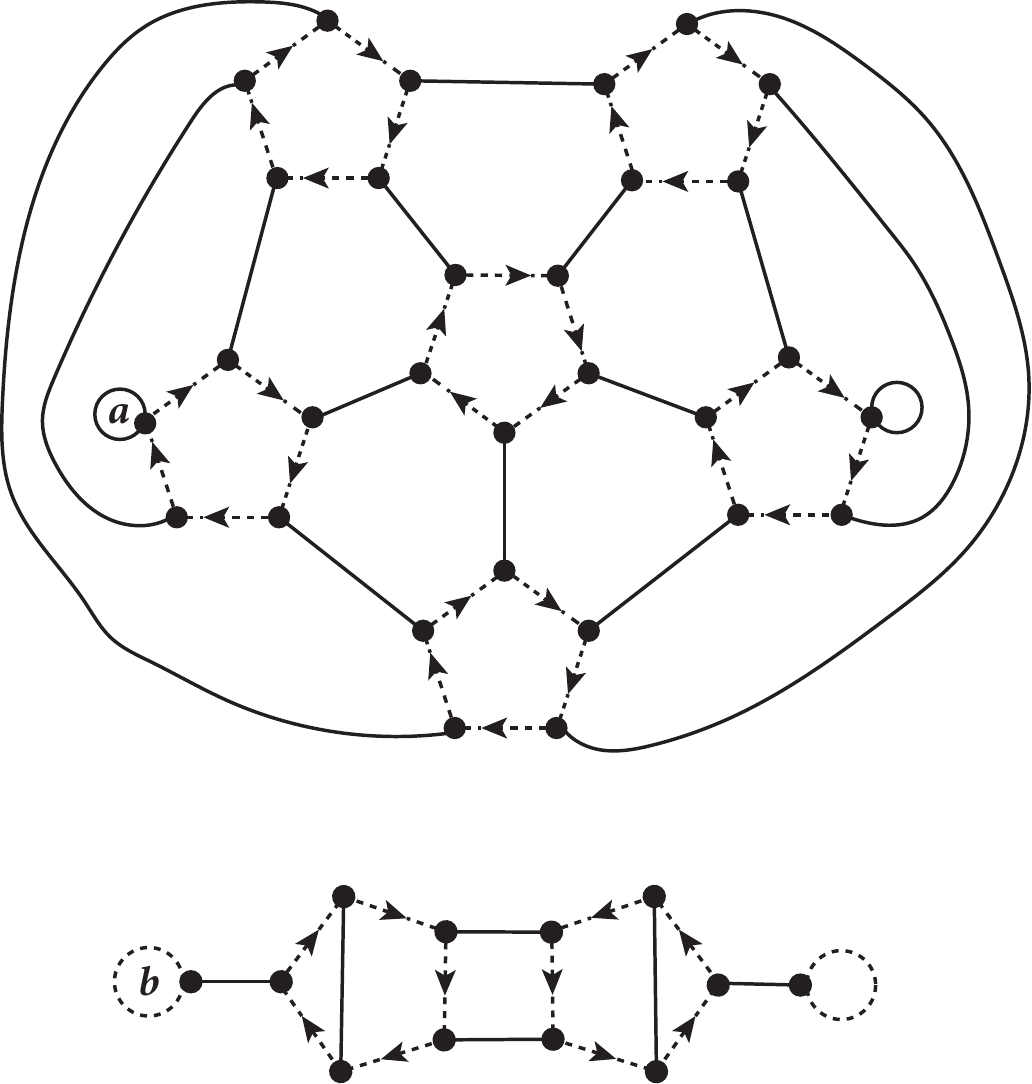}}$$}

\subsection{$T_{2,8}$}

\noindent \parbox{5in}{$Q_{(2,3)}(T_{2,8}) = \langle a, b \mid a^{bababab} = a,\ b^{abababa} = b,\ x^{a^2} = x^{b^3} = x \rangle$

$$\scalebox{.5}{\includegraphics{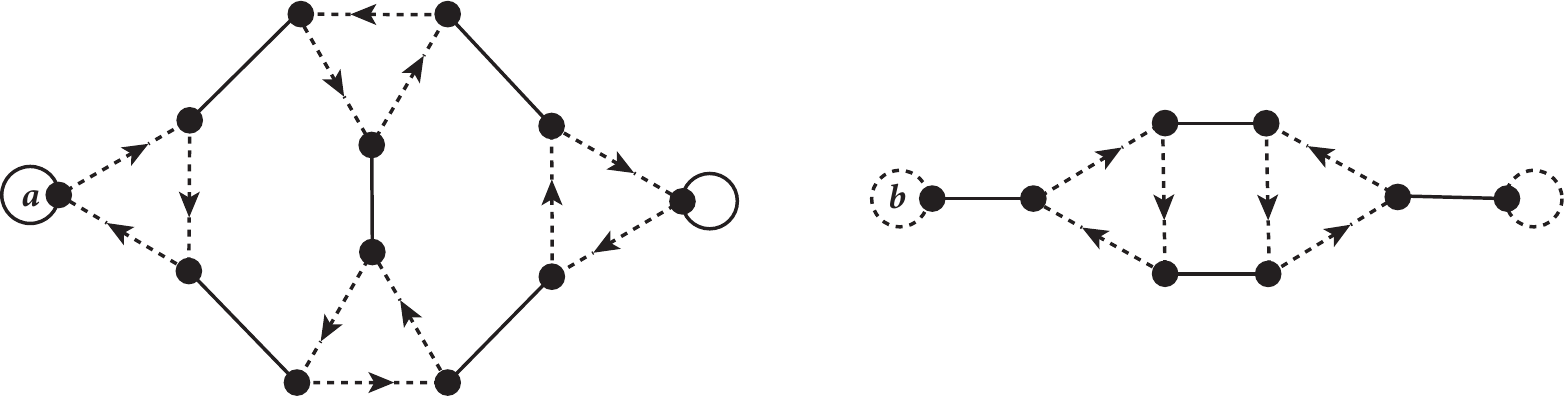}}$$}

\subsection{$T_{2,10}$}

\noindent \parbox{5in}{$Q_{(2,3)}(T_{2,10}) = \langle a, b \mid a^{babababab} = a,\ b^{ababababa} = b,\ x^{a^2} = x^{b^3} = x \rangle$

$$\scalebox{.5}{\includegraphics{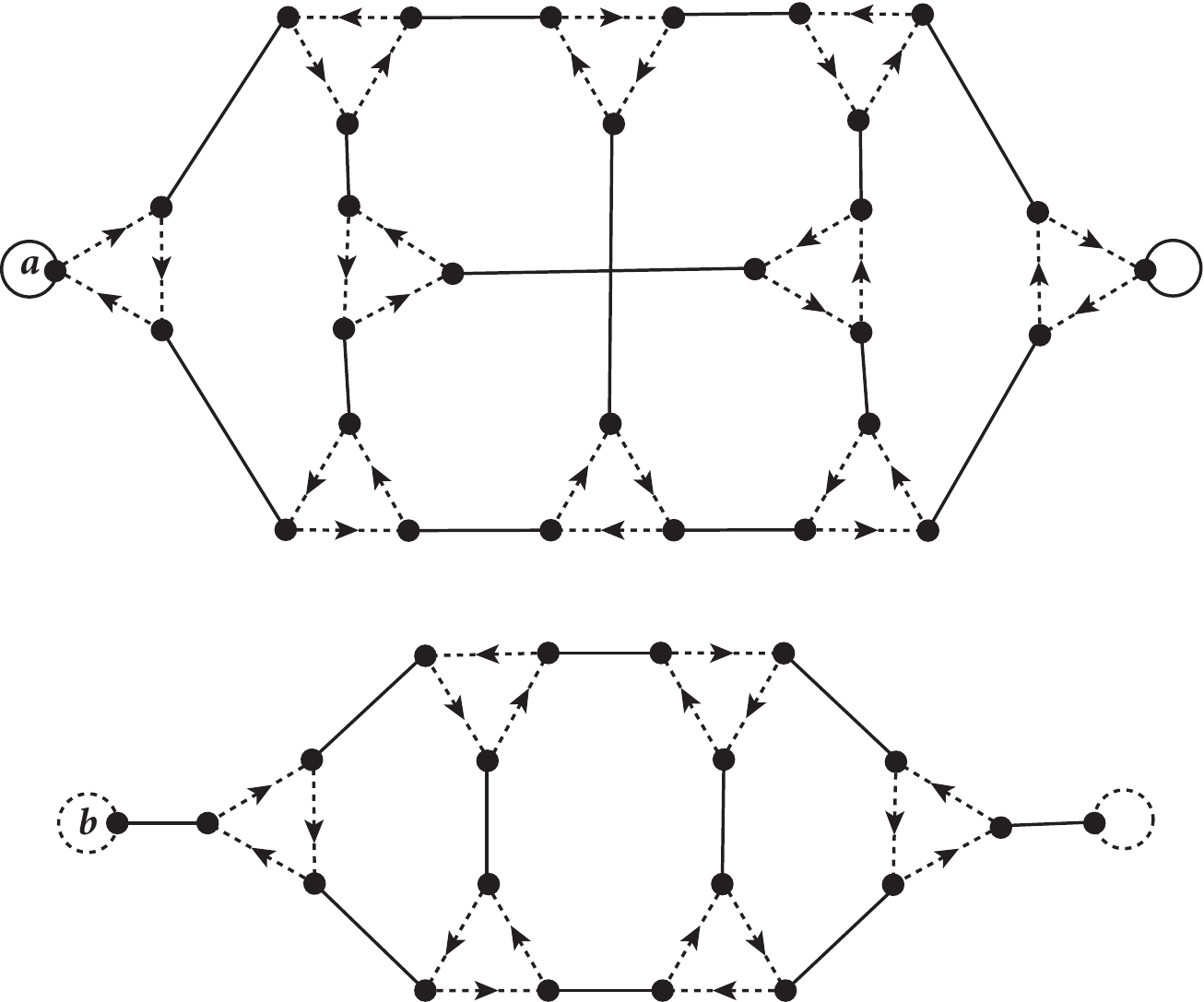}}$$}

\subsection{$T_{3,3}$}

\noindent \parbox{5in}{$Q_{(2,3,3)}(T_{3,3}) = \langle a, b, c \mid a^{cb} = a,\ b^{ac} = b,\ c^{ba} = c,\ x^{a^2} = x^{b^3} = x^{c^3} = x \rangle$

$$\scalebox{.6}{\includegraphics{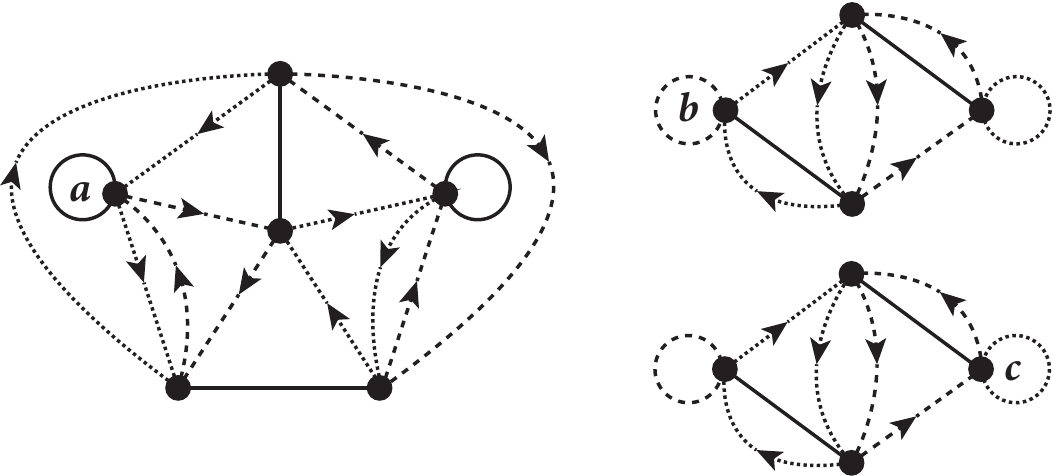}}$$}

\noindent \parbox{5in}{$Q_{(2,3,4)}(T_{3,3}) = \langle a, b, c \mid a^{cb} = a,\ b^{ac} = b,\ c^{ba} = c,\ x^{a^2} = x^{b^3} = x^{c^4} = x \rangle$

$$\scalebox{.4}{\includegraphics{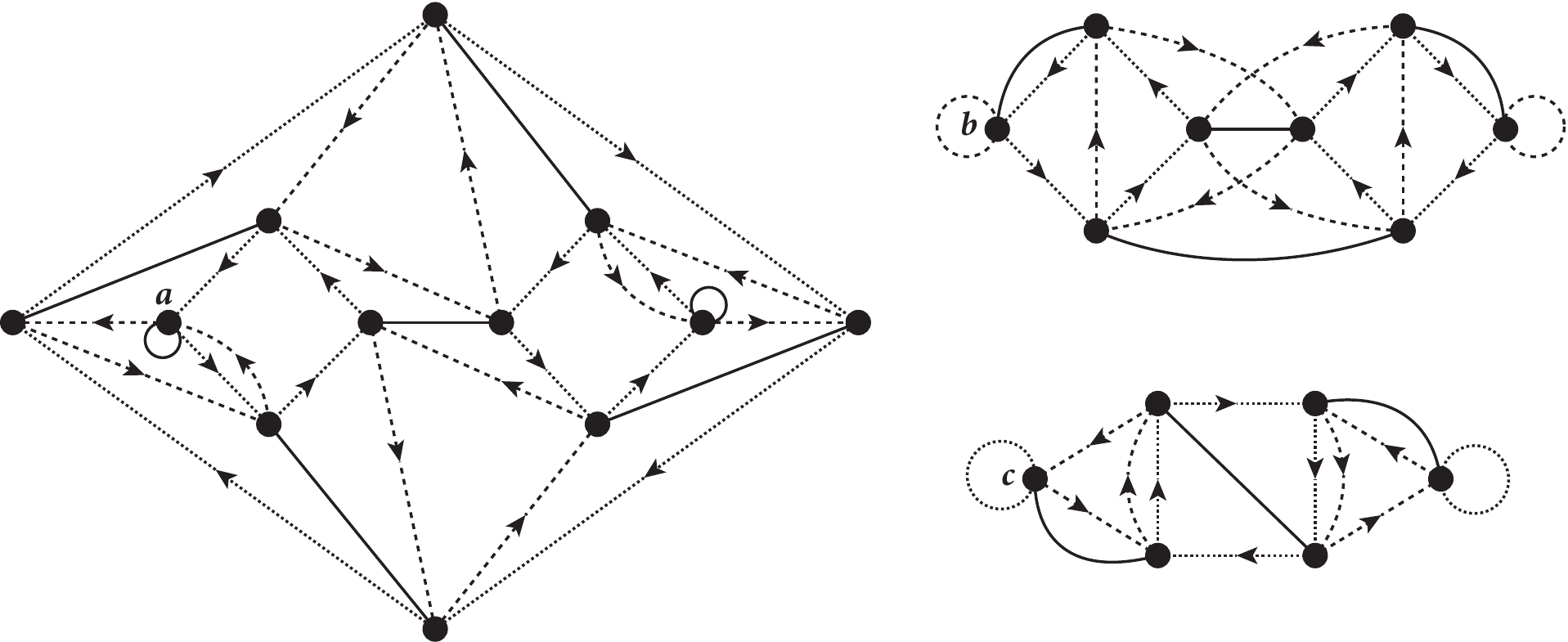}}$$}

\noindent \parbox{5in}{$Q_{(2,3,5)}(T_{3,3}) = \langle a, b, c \mid a^{cb} = a,\ b^{ac} = b,\ c^{ba} = c,\ x^{a^2} = x^{b^3} = x^{c^5} = x \rangle$

$$\scalebox{.45}{\includegraphics{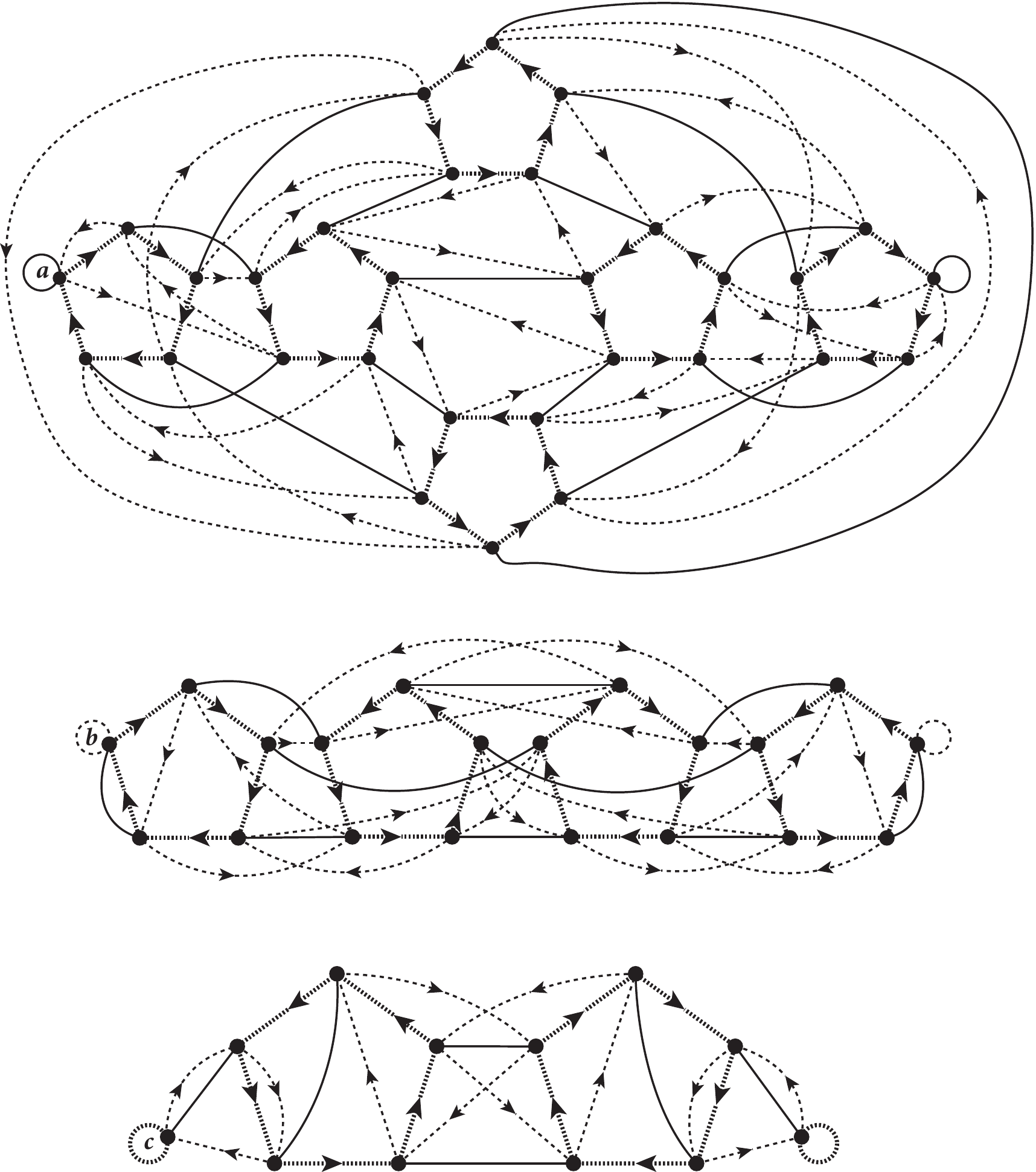}}$$}

\section{The family of links $L_k = T_{2,k} \cup C$} \label{SS:Lk}

We now consider the family of links $L_k = T_{2,k} \cup C$, with $k \neq 0$, shown in Figure \ref{F:Lk}. Here $k$ represents the number of right-handed half-twists (if $k$ is negative, the twists are left-handed); one such half-twist is shown.  The link has two components if $k$ is odd and three components if $k$ is even. We will construct the finite Cayley graph for $Q_N(L_k)$ when $N = (2,n)$ (for $k$ odd) or $N = (2,2,n)$ (for $k$ even), for any integer $n > 1$ (i.e. the label on the torus knot or link is 2, and the label on the other component $C$ is $n$).  As a consequence of our construction, we prove Theorem \ref{T:QNLk}.  The case when $n = 2$ was dealt with by Crans et. al. \cite{CHMS}, so we will assume $n \geq 3$.

\begin{figure}[h]
$$\includegraphics[height=1.5in]{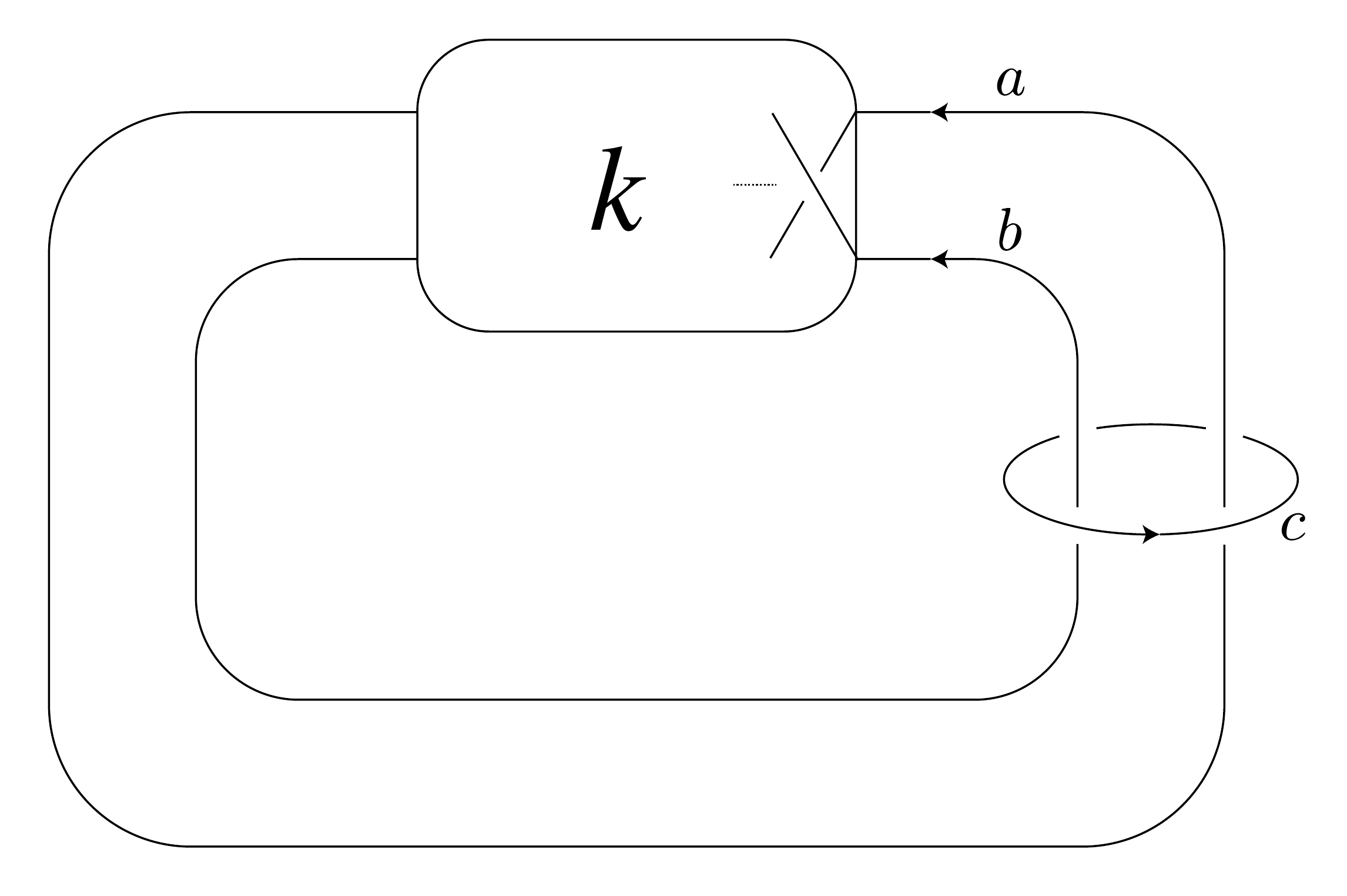}$$
\caption{$L_k = T_{2,k} \cup C$; $k \neq 0$}
\label{F:Lk}
\end{figure}

\subsection{$L_k$ with $k$ odd} \label{SSS:odd}

If we orient the link as shown in Figure \ref{F:Lk}, the quandle has three generators $a, b, c$.  There are two cases, depending on whether $k$ is odd or even.  If $k = 2t+1$ is odd, then the quandle $Q_{(2,n)}(L_k)$ has the following presentation (this can be seen through an easy inductive argument), where $x$ can be any of the generators $a, b, c$:
$$Q_{(2,n)}(L_k) = \langle a, b, c \mid c^{ab} = c,\ a^{(ba)^t bc} = b,\ b^{(ab)^t c} = a, x^{a^2} = x^{b^2} = x^{c^n} = x \rangle.$$
We will denote the relations $R_1$, $R_2$, $R_3$ and $J_{xy}$ (where $J_{xy}$ is the relation $x^{y^{n_y}} = x$, for $x, y \in \{a, b, c\}$).

\begin{remark} \label{R:k<0}
These relations also hold if $t$ is negative, with the convention that $x^{y^{-n}} = x^{\bar{y}^n}$.  Observe that if $t \geq 0$ and $k = -(2t+1) = 2(-t-1)+1$, then the relations become $a^{(ba)^{-t-1} bc} = a^{(ab)^{t+1} bc} = a^{a(ba)^t bbc} = a^{(ba)^t c} = b$ and $b^{(ab)^{-t-1} c} = b^{(ba)^{t+1} c} = b^{(ab)^t ac} = a$.  These are the same as the relations for $k = 2t+1$, except that the generators $a$ and $b$ are switched.  Hence $Q_{(2,n)}(L_{-k})$ is isomorphic to $Q_{(2,n)}(L_k)$.
\end{remark}

We will construct Cayley graphs for these quandles (in light of Remark \ref{R:k<0}, we will assume $k > 0$ in the graphs). Figure \ref{F:Q(2,n)Lkodd} shows the Cayley graphs for $Q_{(2,5)}(L_k)$ and $Q_{(2,6)}(L_k)$; it is easy to see how the pattern continues for larger odd and even values of $n$. In these graphs, the dotted edges represent the operation of the generator $c$.

\begin{figure}[htbp]
$$\includegraphics[height=3.5in]{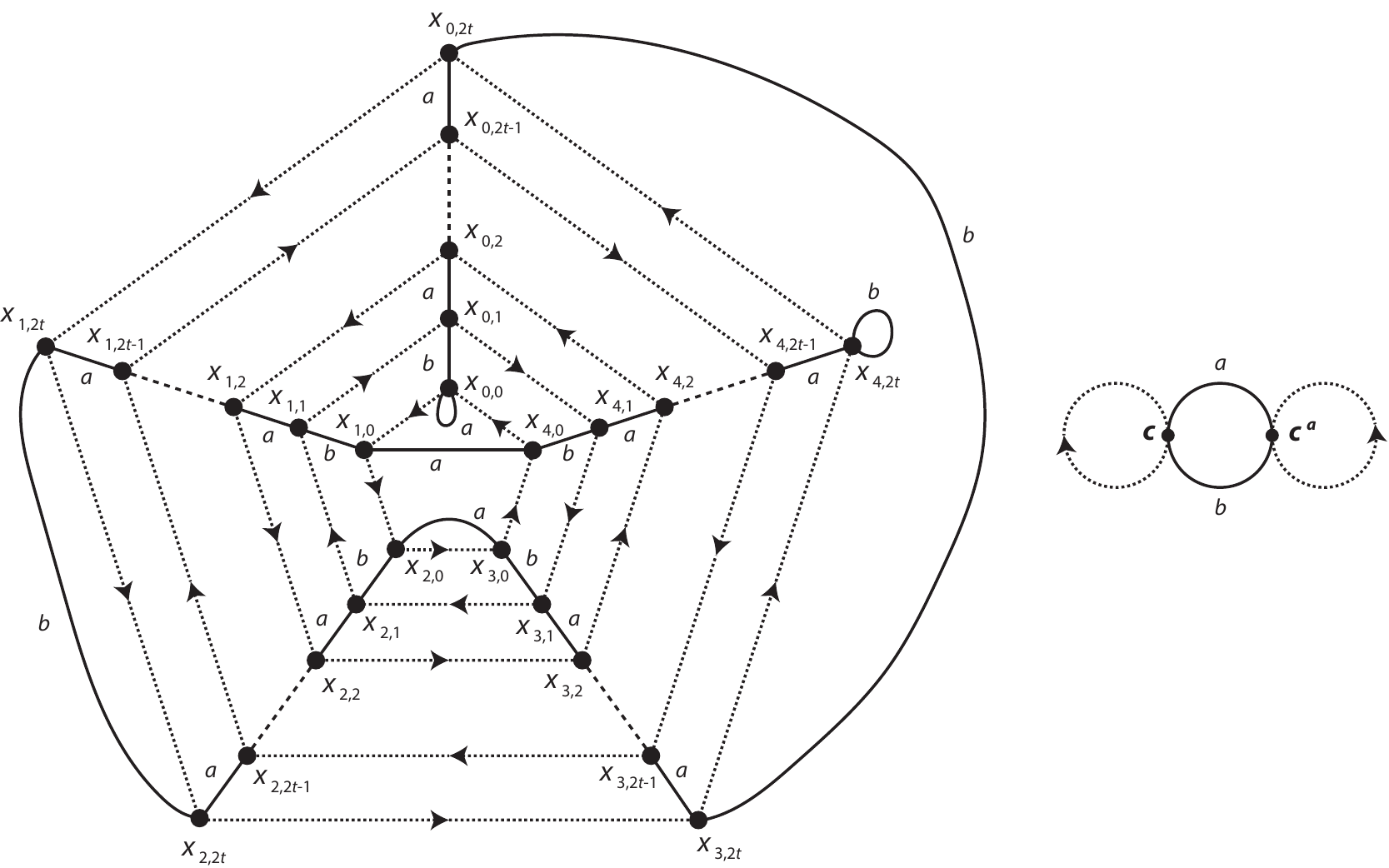}$$
$$\includegraphics[height=3.5in]{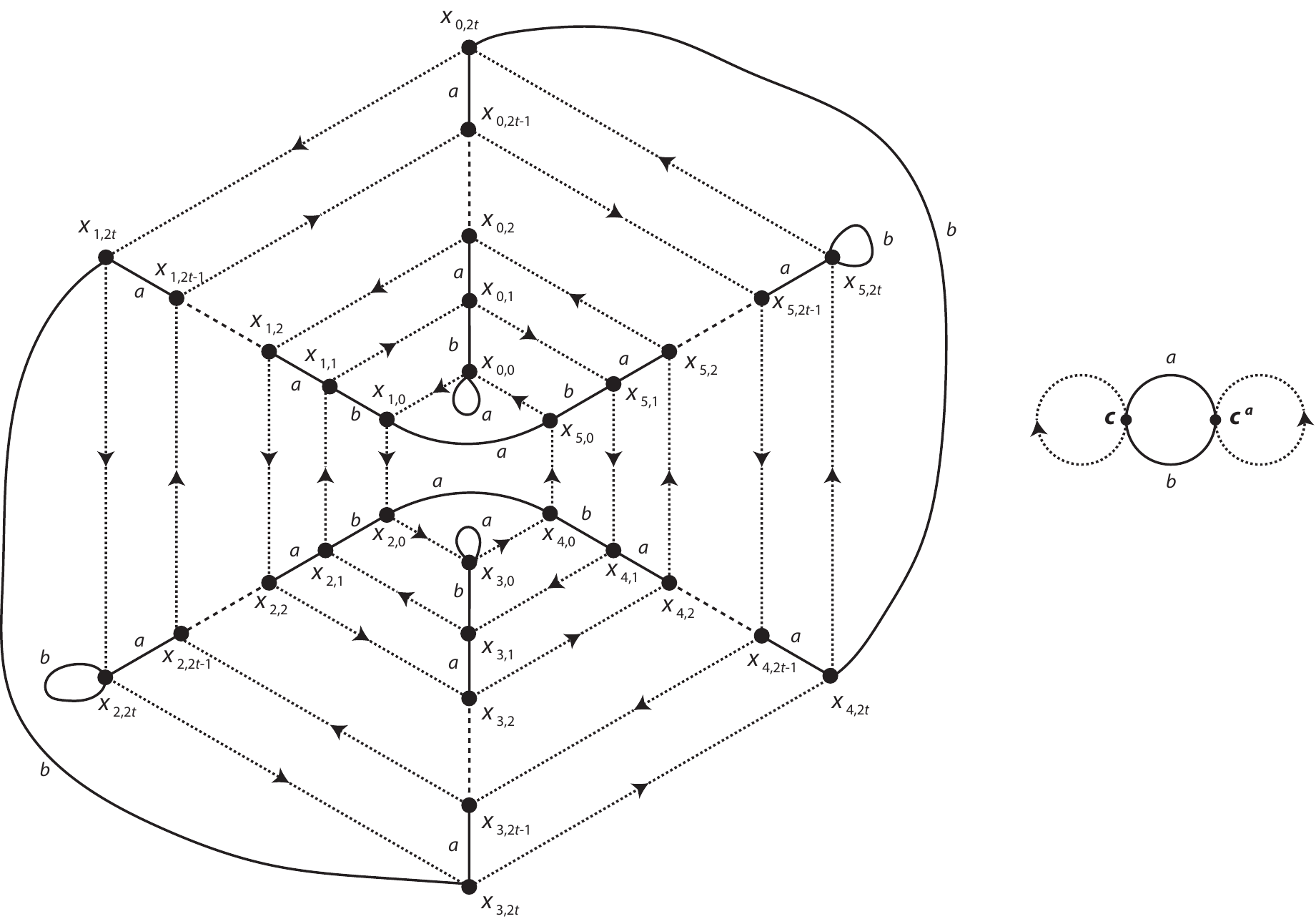}$$
\caption{$Q_{(2,5)}(L_k)$ and $Q_{(2,6)}(L_k)$ for $k = 2t+1$; dotted edges represent generator $c$.}
\label{F:Q(2,n)Lkodd}
\end{figure}

We begin by deriving an alternative presentation for $Q_{(2,n)}(L_k)$ which makes Winker's algorithm easier to use.  We first observe some useful consequences of relation $R_1: c^{ab} = c$ (these were also observed in \cite{Me}).

\begin{lemma}\label{L:r1}
For any $x \in Q_{(2,n)}(L_k)$, we have (here $\tilde{c}$ represents either $c$ or $\bar{c}$)
\begin{enumerate}
	\item $x^{a\tilde{c}a} = x^{b\tilde{c}b}$.
	\item $x^{\tilde{c}ab} = x^{ab\tilde{c}}$ and $x^{\tilde{c}ba} = x^{ba\tilde{c}}$.
	\item For all $w \in \{\tilde{c}a, a\tilde{c}, \tilde{c}b, b\tilde{c}\}$, $x^{wab} = x^{baw}$ and $x^{wba} = x^{abw}$.
	\item For all $v, w \in \{\tilde{c}a, a\tilde{c}, \tilde{c}b, b\tilde{c}\}$, $x^{vwab} = x^{abvw}$ and $x^{vwba} = x^{bavw}$.
\end{enumerate}
\end{lemma}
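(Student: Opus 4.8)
The plan is to derive all four parts from two basic consequences of the single relation $R_1 : c^{ab}=c$, together with the fact that $a$ and $b$ are involutions in $Q_{(2,n)}(L_k)$. Indeed, the relations $J_{aa}$ and $J_{bb}$, i.e. $x^{a^2}=x^{b^2}=x$, say exactly that $S_a^2=S_b^2=\mathrm{id}$, so $S_a=S_a^{-1}$ and $S_b=S_b^{-1}$; equivalently $x^{\bar a}=x^a$ and $x^{\bar b}=x^b$, and any exponent word may be reduced modulo $a^2=b^2=e$. I will also use the conjugation identity $S_{z^w}=S_wS_zS_w^{-1}$, which is just Lemma~\ref{leftassoc} read in terms of point symmetries.

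The first consequence is \emph{(A)}: $c^a=c^b$. Since $R_1$ reads $S_bS_a(c)=c$, applying $S_b^{-1}=S_b$ gives $c^a=c^{\bar b}=c^b$. The second is \emph{(B)}: $S_c$ commutes with $S_bS_a$, hence with its inverse $S_aS_b$. From $c^{ab}=c$ we get $S_c=S_{c^{ab}}=(S_bS_a)S_c(S_bS_a)^{-1}$, so $(S_bS_a)S_c=S_c(S_bS_a)$. Written in exponential notation this is precisely Part~2, because $S_bS_aS_c(x)=x^{cab}$ and $S_cS_bS_a(x)=x^{abc}$, while the commutation with $S_aS_b$ gives $x^{cba}=x^{bac}$; the $\bar c$ versions hold because $S_{\bar c}=S_c^{-1}$ commutes with everything $S_c$ commutes with. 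So Part~2 is nothing but the statement of consequence (B).

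For Part~1 I would use Lemma~\ref{leftassoc}: since $\bar a=a$ and $\bar b=b$ we have $x^{aca}=x^{\bar a c a}=x^{(c^a)}$ and $x^{bcb}=x^{(c^b)}$, so (A) yields $x^{aca}=x^{bcb}$ immediately, and the case $\tilde c=\bar c$ follows by passing to inverse symmetries. For Part~3, each identity reduces to (B) after cancelling an adjacent $aa$ or $bb$ block via the involution relations: for $w=\tilde c a$ one has $x^{wab}=x^{\tilde c a a b}=x^{\tilde c b}$ and $x^{baw}=x^{ba\tilde c a}$, and the required $S_bS_{\tilde c}=S_aS_{\tilde c}S_aS_b$ becomes $S_aS_bS_{\tilde c}=S_{\tilde c}S_aS_b$ after multiplying by $S_a$, which is exactly (B); the remaining choices of $w$ and the companion identity $x^{wba}=x^{abw}$ are identical. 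Finally Part~4 follows by applying Part~3 twice, migrating the $ba$ (or $ab$) block across $vw$: $x^{vwab}=(x^v)^{wab}=(x^v)^{baw}=x^{vbaw}=(x^{vba})^{w}=(x^{abv})^{w}=x^{abvw}$, where the first reassociation uses the first identity of Part~3 and the second uses its companion, and $x^{vwba}=x^{bavw}$ is symmetric.

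I expect no serious obstacle: the entire content is consequence (B), and everything else is bookkeeping with $a^2=b^2=e$. The one place demanding care is the orientation convention—tracking whether $\tilde c$ stands for $c$ or $\bar c$, and ensuring each reassociation via Lemma~\ref{leftassoc} is carried out in the correct order. I would guard against errors by translating every step into a composition of point symmetries $S_a,S_b,S_c$, where the commutation relations are transparent and the non-associativity of $\rhd$ is no longer a hazard.
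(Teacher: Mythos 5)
Your proof is correct, and it rests on the same two ingredients as the paper's ($R_1\colon c^{ab}=c$ together with $S_a^2=S_b^2=\mathrm{id}$), but you organize the deductions differently. The paper works entirely in exponent notation: it first proves part (1) from $c^a=c^b$ (handling $\tilde c=\bar c$ via the identity $x^{(a\bar ca)(aca)(b\bar cb)}=x^{a\bar c a}$), then derives part (2) from part (1) by inserting $aa$ and $bb$ blocks (e.g.\ $x^{cab}=x^{a(aca)b}=x^{a(bcb)b}=x^{abc}$), and proves part (3) case by case again from part (1). You instead pass to $\mathrm{Inn}(Q)$ and isolate the single commutation relation $S_c(S_bS_a)=(S_bS_a)S_c$ (your consequence (B), obtained from $S_{c^{ab}}=(S_bS_a)S_c(S_bS_a)^{-1}$), of which parts (2) and (3) are both direct instances once adjacent $aa$ or $bb$ blocks are cancelled; part (1) comes from $c^a=c^b$ exactly as in the paper, and part (4) by applying (3) twice, which is also the paper's argument. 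Your packaging buys conceptual economy — it makes visible that (2) and (3) are the same statement, replaces the paper's case checks with one group-theoretic identity, and sidesteps the non-associativity bookkeeping you rightly flag as the main hazard. What the paper's version buys is fluency in the exponent-word calculus itself: the identical insert-and-cancel manipulations recur constantly in the later lemmas (e.g.\ Lemmas \ref{L:relations}, \ref{L:ends}, \ref{L:sides} and the secondary-relation verifications), so proving Lemma \ref{L:r1} in that idiom sets up the notation the rest of the construction depends on. One small point of care in your write-up: your reductions silently use that (B) implies commutation of $S_{\tilde c}$ with both $S_bS_a$ and its inverse $S_aS_b$ — you do state this, and with $S_a,S_b$ involutions it is immediate, so there is no gap.
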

\begin{proof}
Since $c^a = c^b$, we immediately get $x^{aca} = x^{c^a} = x^{c^b} = x^{bcb}$.  Then $x^{(aca)(b\bar{c}b)} = x$ for any $x$.  In particular, $x^{a\bar{c}a} = x^{(a\bar{c}a)(aca)(b\bar{c}b)} = x^{b\bar{c}b}$.  We will prove the remaining relations for $c$; the same arguments work for $\bar{c}$.

For part (2), $x^{cab} = x^{a(aca)b} = x^{a(bcb)b} = x^{abc}$.  Similarly, $x^{cba} = x^{b(bcb)a} = x^{b(aca)a} = x^{bac}$.

For part (3), we consider the case when $w = ca$.  Then, using part (1), $x^{caab} = x^{cb} = x^{bbcb} = x^{baca}$ and $x^{caba} = x^{aacaba} = x^{abcbba} = x^{abca}$.  The other cases are proved similarly.  Part (4) is just the application of (3) twice.
\end{proof}

Now we will derive several other relations in $Q_{(2,n)}(L_k)$.

\begin{lemma} \label{L:relations}
The following relations hold in $Q_{(2,n)}(L_k)$.
\begin{enumerate}
	\item $P_1: a^{\bar{c}(ba)^t} = b$.
	\item $P_2: a^{(ba)^t\bar{c}} = b$.
	\item $P_3: a^{cac} = a$.
	\item $P_4: b^{cbc} = b$.
\end{enumerate}
\end{lemma}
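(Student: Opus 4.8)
\noindent\emph{Proof proposal.} The plan is to peel the trailing letters off the primary relations $R_2$ and $R_3$ using that $a$ and $b$ act as their own inverses (the relations $x^{a^2}=x^{b^2}=x$), and then to shuttle the $c$'s around using the consequences of $R_1$ collected in Lemma~\ref{L:r1}. Two facts will be used repeatedly: first, Lemma~\ref{L:r1}(2) shows that $\tilde c$ commutes, as an operator on exponents, with both $ab$ and $ba$, hence with $(ab)^t$ and $(ba)^t$; second, since $a$ and $b$ are involutions we have $ba=(ab)^{-1}$ as operators, so $(ab)^t(ba)^t$ acts trivially.

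For $P_1$ I would start from $R_3$ and append $\bar c$ to both sides to obtain the ``clean'' form $b^{(ab)^t}=a^{\bar c}$, and then compute $a^{\bar c(ba)^t}=(a^{\bar c})^{(ba)^t}=(b^{(ab)^t})^{(ba)^t}=b^{(ab)^t(ba)^t}=b$. For $P_2$ I would just commute the $\bar c$ rightward past $(ba)^t$ via Lemma~\ref{L:r1}(2), giving $a^{(ba)^t\bar c}=a^{\bar c(ba)^t}=b$. Next, $P_4$ follows by comparing two expressions for the same element: from $P_2$ (appending $c$) we get $a^{(ba)^t}=b^c$, and appending a $b$ gives $a^{(ba)^tb}=b^{cb}$; on the other hand, stripping the trailing $c$ from $R_2$ gives $a^{(ba)^tb}=b^{\bar c}$. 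Equating the two yields $b^{cb}=b^{\bar c}$, and appending $c$ gives $b^{cbc}=b$, which is $P_4$.

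The main obstacle is $P_3$. The naive approach—solving one relation for $a^c$ and substituting it back—collapses to a tautology, since every expression one can write for $a^c$ is already forced to be consistent with $a^{\bar c}$; moreover the asymmetry between $R_2$ (which ends in $bc$) and $R_3$ (which ends in $c$) means $P_3$ is not simply the mirror image of the $P_4$ argument. Instead, the plan is to use $P_2$ in the form $a=b^{c(ab)^t}$, compute $a^{ca}=b^{c^2(ab)^t a}$ (pushing the first $c$ past $(ab)^t$), and then re-associate the final exponent letter $a$ via Lemma~\ref{leftassoc}: since $a=b^{c(ab)^t}$, that letter $a$ may be replaced by $(ba)^t\bar c\,b\,c(ab)^t$. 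Cancelling $(ab)^t(ba)^t$ and simplifying $c^2\bar c=c$ reduces $a^{ca}$ to $(b^{cbc})^{(ab)^t}$, which by $P_4$ equals $b^{(ab)^t}=a^{\bar c}$. Hence $a^{ca}=a^{\bar c}$, i.e.\ $a^{cac}=a$, giving $P_3$. The crux is recognizing that re-association (Lemma~\ref{leftassoc}), rather than direct substitution, is what breaks the circularity and lets $P_4$ close the argument.
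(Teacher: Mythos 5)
Your proofs of $P_1$, $P_2$ and $P_4$ are correct and essentially coincide with the paper's: $P_1$ is the second-axiom rewrite of $R_3$ (your cancellation of $(ab)^t(ba)^t$, using that $a$ and $b$ act as involutions by the relations $x^{a^2}=x^{b^2}=x$, is exactly what the paper's ``equivalent by the second quandle axiom'' compresses), $P_2$ is the same commutation of $\bar c$ past $(ba)^t$ via Lemma~\ref{L:r1}(2), and your $P_4$ --- equating $a^{(ba)^tb}=b^{cb}$ (from $P_2$) with $a^{(ba)^tb}=b^{\bar c}$ (from $R_2$) --- is the same computation the paper writes in one line as $b^{cbc}=a^{(ba)^tbc}=b$. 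For $P_3$ you genuinely diverge, and your argument is valid: from $a=b^{c(ab)^t}$ you get $a^{ca}=b^{c(ab)^tca}=b^{c^2(ab)^ta}$, re-associating the trailing $a$ via Lemma~\ref{leftassoc} into $(ba)^t\bar c\,b\,c(ab)^t$, cancelling $(ab)^t(ba)^t$ and $c^2\bar c=c$, gives $a^{ca}=(b^{cbc})^{(ab)^t}$, and $P_4$ together with the clean form $b^{(ab)^t}=a^{\bar c}$ of $R_3$ yields $a^{ca}=a^{\bar c}$, i.e.\ $a^{cac}=a$; the dependency $P_4\Rightarrow P_3$ is non-circular since your $P_4$ uses only $P_2$ and $R_2$. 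However, your framing of the obstacle is mistaken: the direct route does not collapse to a tautology, and the paper proves $P_3$ by exactly the kind of substitution you dismiss. From $R_2$, $b=a^{(ba)^tbc}=a^{(ab)^{t+1}c}=a^{c(ab)^{t+1}}$ (absorbing the leading $a$ by $a^a=a$ and commuting $c$ by Lemma~\ref{L:r1}), whence $a^c=b^{(ba)^{t+1}}=b^{(ab)^ta}=a^{\bar ca}$ by $R_3$, giving $a^{cac}=a$ with no re-association and no appeal to $P_4$. The key move you missed is using axiom A1 to absorb a leading generator into the base, which lets $R_2$ be brought into a form where $R_3$ applies directly. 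So your route costs an extra lemma dependency and a longer word computation, while the paper's is shorter and keeps $P_3$ and $P_4$ independent of each other; on the other hand, your use of Lemma~\ref{leftassoc} to rewrite a generator occurrence inside an exponent is a legitimate and more systematic technique, and it is the same device the paper itself deploys later (e.g.\ in Lemma~\ref{L:Mksides}).
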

\begin{proof}
$P_1$ is equivalent to $R_3$ by the second quandle axiom.  $P_2$ follows from $P_1$ by Lemma \ref{L:r1}.

From relation $R_2$, $b = a^{(ba)^t b c} = a^{(ab)^{t+1} c} = a^{c(ab)^{t+1}}$ (by Lemma \ref{L:r1}).  Hence $a^c = b^{(ba)^{t+1}} = b^{(ab)^t a} = a^{\bar{c} a}$ by relation $R_3$.  This implies $a^{cac} = a$, giving relation $P3$.

From relation $P_2$, $b^c = a^{(ba)^t}$.  This implies $b^{cbc} = a^{(ba)^t b c} = b$ (by $R_2$).
\end{proof}

Our new presentation for $Q_{(2,n)}(L_k)$ will be
$$Q_{(2,n)}(L_k) = \langle a, b, c \mid R_1, P_1, P_2, P_3, P_4, \{J_{xy} \mid x, y \in \{a,b,c\}\} \rangle.$$
Lemma \ref{L:relations} shows that all the relations in this presentation follow from $R_1$, $R_2$, $R_3$ and $J_{xy}$.  Conversely, $R_3$ is equivalent to $P_1$ by the second quandle axiom, and we derive $R_2$ as follows:
$$a^{(ba)^t b c} = a^{(ba)^t \bar{c} c b c} \stackrel{P2}{=} b^{cbc} \stackrel{P4}{=} b.$$
So the two presentations are equivalent.

We begin our construction of the Cayley graph by tracing the primary relations.  The graph will have two components; one contains the generators $a$ and $b$, and the other contains $c$.  We first trace out the component containing $a$ and $b$. We start at the vertex $a$, which we denote $x_{0,0}$, and add a loop labeled $a$.  We trace the relation $J_{ac}: a^{c^n} = a$, letting $x_{i,0}^c = x_{i+1,0}$ for $0 \leq i < n$ (where the first subscript is modulo $n$); this traces the innermost polygon in Figure \ref{F:Q(2,n)Lkodd}.  We then trace the relation $P_1: a^{\bar{c}(ba)^t} = b$.  Recall that $a^{\bar{c}} = x_{n-1,0}$.  Now we denote $a^{\bar{c}(ba)^i}$ by $x_{n-1,2i}$ and $a^{\bar{c}(ba)^i b}$ by $x_{n-1, 2i+1}$.  This means that $b = x_{n-1, 2t}$. We add a loop labeled $b$ at vertex $x_{n-1, 2t}$ (see Figure \ref{F:Q(2,n)Lkodd}).

We now trace relation $P_2: a^{(ba)^t\bar{c}} = b$.  Denote $a^{(ba)^i}$ by $x_{0, 2i}$ and $a^{(ba)^i b}$ by $x_{0,2i+1}$ for $0 \leq i < t$.  Then $x_{0,2t}^{\bar{c}} = b = x_{n-1,2t}$.  Next we trace the relation $J_{bc}: b^{c^n} = b$.  This gives us the outermost polygon in Figure \ref{F:Q(2,n)Lkodd}; we let $x_{i,2t}^c = x_{i+1, 2t}$ for $0 \leq i < n$, with the first subscript computed modulo $n$.  (This is consistent with $x_{0,2t}^{\bar{c}} = x_{n-1,2t}$.)  We now have the innermost and outermost polygons, and two paths connecting them.

The last primary relations for this component are $P_3$ and $P_4$. $P_3$ can be rewritten as $a^{ca} = a^{\bar{c}}$, which means $x_{1,0}^a = x_{n-1,0}$.  $P_4$ can be rewritten as $b^{cb} = b^{\bar{c}}$, so $x_{0,2t}^b = x_{n-2,2t}$.  These give one of the edges inside the inner polygon, and one of the edges outside the outer polygon (respectively) in Figure \ref{F:Q(2,n)Lkodd}.

To trace the remainder of the Cayley graph, we will use the secondary relations:
\begin{align*}
W_1&:\ x^{c^{ab}} = x^c \implies x^{bacab\bar{c}} = x \\
W_2&:\ x^{a^{\bar{c}(ba)^t}} = x^b \implies x^{(ab)^t ca\bar{c}(ba)^t b} = x \\
W_3&:\ x^{a^{(ba)^t\bar{c}}} = x^b \implies x^{c(ab)^ta(ba)^t \bar{c} b} = x \\
W_4&:\ x^{a^{cac}} = x^a \implies x^{(\bar{c}a)^2(ca)^2} = x \\
W_5&:\ x^{b^{cbc}} = x^b \implies x^{(\bar{c}b)^2(cb)^2} = x
\end{align*}

Note that $W_1$ is implied by Lemma \ref{L:r1}, part (3).  Using Lemma \ref{L:r1}, $W_2$ can be rewritten as 
$$x = x^{(ab)^t ca\bar{c}(ba)^t b} = x^{(ab)^t ca(ba)^t \bar{c} b} = x^{(ab)^t c (ab)^t a \bar{c} b} = x^{(ab)^{2t} ca\bar{c}b}.$$
And, similarly, $W_3$ can be rewritten as
$$x = x^{c(ab)^ta(ba)^t \bar{c} b} = x^{c(ab)^{2t} a \bar{c} b} = x^{(ab)^{2t} ca\bar{c} b}.$$
So $W_2$ and $W_3$ are equivalent relations, and we need only verify one of them at each vertex of the Cayley graph.

Lemma \ref{L:r1} says $x^{(ba)c} = x^{c(ba)}$ for any quandle element $x$, and hence (inductively) $x^{(ba)^i c} = x^{c(ba)^i}$ for any quandle element $x$ and positive integer $i$.  In particular, $a^{(ba)^{t} c} = a^{c(ba)^{t}} \implies x_{0,2t}^c = x_{1,0}^{(ba)^{t}} \implies x_{1,2t} = x_{1,0}^{(ba)^{t}}$. Proceeding inductively, we see $x_{i,2t} = x_{i,0}^{(ba)^{t}}$ for $0 \leq i < n$.  This gives us the radial paths in Figure \ref{F:Q(2,n)Lkodd}, and inspires us to denote $x_{i,2j} = x_{i,0}^{(ba)^j}$ and $x_{i, 2j+1} = x_{i,0}^{(ba)^j b}$.  This means $x_{i,2j-1}^a = x_{i,2j}$ and $x_{i,2j}^b = x_{i,2j+1}$. We claim that we have now labeled all the vertices in this component of the Cayley graph; it remains to show that all other edges are among these vertices, and that the graph will not collapse any further.

Our next lemma traces the edges inside the innermost polygon, and outside the outermost polygon.

\begin{lemma} \label{L:ends}
For every $i$ with $0 \leq i < n$, \begin{enumerate}
	\item $x_{i,0}^a = x_{n-i,0}$ and 
	\item $x_{i,2t}^b = x_{n-2-i,2t}$ 
\end{enumerate}
(where the first subscript is considered modulo $n$).
\end{lemma}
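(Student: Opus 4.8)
The plan is to prove both parts by the same argument, exploiting the symmetry that interchanges $a\leftrightarrow b$, $P_3\leftrightarrow P_4$, $W_4\leftrightarrow W_5$, and the inner polygon $\{x_{i,0}\}$ with the outer polygon $\{x_{i,2t}\}$. I will describe part (1) and then indicate the (shifted) version needed for part (2). The essential idea is to extract from the secondary relation $W_4$ a \emph{global} identity between point symmetries, rather than trying to induct directly on the unknown values $x_{i,0}^a$.

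First I would rewrite $W_4$. Since $x^{a^2}=x$, the letters $a$ and $\bar a$ act identically, so reversing and inverting the word shows that $x^{(\bar c a)^2(ca)^2}=x$ for all $x$ is equivalent to $x^{caca}=x^{acac}$ for all $x$. In terms of point symmetries this says $(S_aS_c)^2=(S_cS_a)^2$ as automorphisms of the quandle. Because $(S_cS_a)^2=S_c(S_aS_c)^2S_c^{-1}$, this identity is exactly the statement that $(S_aS_c)^2$ commutes with $S_c$. This is the one routine computation that must be carried out carefully, and it is the whole point of the argument, since it converts $W_4$ from a relation about a single word into a structural fact valid everywhere.

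With $g=S_aS_c$, I would then argue as follows. By $P_3$ we already know $S_a(x_{1,0})=x_{n-1,0}$; since $x_{0,0}=a$ is fixed by $S_a$ and $S_c(x_{n-1,0})=x_{0,0}$, the map $g$ interchanges $x_{0,0}$ and $x_{n-1,0}$, so $g^2$ fixes $x_{0,0}$. Now $S_c$ acts on the inner polygon as the $n$-cycle $x_{i,0}\mapsto x_{i+1,0}$, and $g^2$ commutes with $S_c$; hence $g^2(x_{i,0})=g^2S_c^{i}(x_{0,0})=S_c^{i}g^2(x_{0,0})=x_{i,0}$ for every $i$, so $(S_aS_c)^2$ is the identity on the inner polygon. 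Equivalently $S_aS_c=S_c^{-1}S_a$ there, whence $S_aS_c^{i}=S_c^{-i}S_a$ by induction, and evaluating at $x_{0,0}$ gives $x_{i,0}^a=S_a(S_c^{i}(x_{0,0}))=S_c^{-i}(x_{0,0})=x_{n-i,0}$, which is part (1). For part (2) I would run the identical argument with $W_5$ (equivalent to $(S_bS_c)^2=(S_cS_b)^2$, hence to $(S_bS_c)^2$ commuting with $S_c$) and $P_4$; here the fixed base point is $b=x_{n-1,2t}$, and $P_4$ shows $S_bS_c$ interchanges $x_{n-1,2t}$ and $x_{n-2,2t}$, so the same propagation yields $x_{i,2t}^b=x_{n-2-i,2t}$.

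The hard part is not any single computation but choosing the right packaging: a direct induction on $i$ for $x_{i,0}^a=x_{n-i,0}$ is awkward because each inductive step must chase the still-unknown values $x_{i,0}^a$ through the wrap-around (modulo $n$) indexing, which makes the recursion circular. The device that sidesteps this obstacle is to promote $W_4$ to the commutation of $(S_aS_c)^2$ with $S_c$, so that a single base computation coming from $P_3$ (respectively $P_4$) spreads to the entire polygon by transitivity of the $S_c$-action.
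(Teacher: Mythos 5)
Your argument is correct, but it is organized differently from the paper's. The paper proves part (1) by a single induction on $i$ with a \emph{strengthened} hypothesis: it carries along the auxiliary statement $x_{i,0}^{(ca)^2}=x_{i,0}$, establishing the base case from $P_3$ and using $W_4$ (in the form $x^{(ca)^2}=x^{(ac)^2}$) to push the auxiliary statement from $x_{i,0}$ to $x_{i+1,0}$ at each step; part (2) is the mirror argument with $P_4$ and $W_5$. So your meta-remark that a direct induction would be ``circular'' is not quite fair --- the paper's recursion closes precisely because the hypothesis is strengthened. What you do instead is promote that auxiliary statement to a one-shot structural fact: reading $W_4$ (together with $S_a^2=\mathrm{id}$, from $x^{a^2}=x$) as $(S_aS_c)^2=(S_cS_a)^2$ in $\mathrm{Inn}(Q)$, i.e.\ as $(S_aS_c)^2$ commuting with $S_c$, you deduce from $P_3$ and the loop at $a$ that $S_aS_c$ swaps $x_{0,0}$ and $x_{n-1,0}$, so $(S_aS_c)^2$ fixes $x_{0,0}$; transitivity of the $\langle S_c\rangle$-action on the inner polygon then makes $(S_aS_c)^2$ the identity on the whole polygon at once. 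This decouples the two statements the paper proves simultaneously and makes the dihedral mechanism transparent ($S_a$ conjugates the rotation $S_c$ to its inverse on the polygon); both proofs consume exactly the same inputs ($P_3/P_4$, $W_4/W_5$, and the $J$-relations tracing the polygons), so nothing extra is needed. One point you should state explicitly: $S_aS_c=S_c^{-1}S_a$ is only a \emph{pointwise} identity at polygon vertices, not an identity in $\mathrm{Inn}(Q)$, so the induction $S_aS_c^{i}=S_c^{-i}S_a$ must be interpreted as evaluated along the orbit of $x_{0,0}$; this is legitimate because $S_c$ preserves the polygon, so every intermediate evaluation point is again a polygon vertex, but as written the step reads like an operator identity. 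With that caveat, part (1) is complete, and your part (2) (base point $b=x_{n-1,2t}$, the swap of $x_{n-1,2t}$ and $x_{n-2,2t}$ coming from $P_4$ via $b^{cb}=b^{\bar c}$, and commutation from $W_5$) checks out and yields $x_{i,2t}^b=x_{n-2-i,2t}$, matching the paper's conclusion.
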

\begin{proof}
We will first prove part (1) by induction on $i$.  For our base case, since $x_{0,0} = a$, we know $x_{0,0}^a = x_{0,0}$.  Also, by relation $P_3$, $x_{0,0}^{cac} = x_{0,0} = x_{0,0}^a$, which implies $x_{0,0}^{(ca)^2} = x_{0,0}$.

For our inductive step, we assume both that $x_{i,0}^a = x_{n-i,0}$ (so $x_{n-i,0}^a = x_{i,0}$ as well) and $x_{i,0}^{(ca)^2} = x_{i,0}$. Then 
$$x_{i+1,0}^a = x_{i,0}^{ca} = x_{i,0}^{a\bar{c}} = x_{n-i,0}^{\bar{c}} = x_{n-i-1,0} = x_{n-(i+1),0}.$$
Relation $W_4$ implies $x^{(ca)^2} = x^{(ac)^2}$, so
$$x_{i+1,0}^{(ca)^2} = x_{i+1,0}^{(ac)^2} = x_{n-(i+1),0}^{cac} = x_{n-i,0}^{ac} = x_{i,0}^c = x_{i+1,0}.$$
This completes the inductive step, and the proof of part (1).

The proof of part (2) is similar, using relation $W_5$. Here, the base case is $x_{n-1,2t}$, and the inductive step moves from $x_{(n-1)+i,2t}$ to $x_{(n-1)+(i+1),2t}$.
\end{proof}

In particular, if $n$ is even, there is a loop labeled $a$ at $x_{n/2,0}$ and a loop labeled $b$ at $x_{(n/2)-1,2t}$.

Our next lemma traces the sides of the nested polygons.

\begin{lemma} \label{L:sides}
For every $i, j$ with $0 \leq i < n$ and $0 \leq j < t$, we have $x_{i, 2j}^c = x_{i+1,2j}$ and $x_{i,2j+1}^c = x_{i-1,2j+1}$.
\end{lemma}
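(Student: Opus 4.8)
The plan is to prove both identities by induction on $j$, with an inner structure that exploits the radial labeling $x_{i,2j} = x_{i,0}^{(ba)^j}$ and $x_{i,2j+1} = x_{i,0}^{(ba)^j b}$ already established. First I would dispose of the base case $j = 0$. For $j=0$ the first claim $x_{i,0}^c = x_{i+1,0}$ is immediate: it is precisely how the innermost polygon was traced via relation $J_{ac}$, setting $x_{i,0}^c = x_{i+1,0}$ for $0 \le i < n$. So the content is really the inductive step, where I want to transport the statement at level $j$ to level $j+1$.

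For the inductive step, I would use the defining relation $x_{i,2(j+1)} = x_{i,0}^{(ba)^{j+1}} = x_{i,2j}^{ba}$, i.e. $x_{i,2j+2} = x_{i,2j}^{ba}$, and similarly $x_{i,2j+1} = x_{i,2j}^b$. The key move is to compute $x_{i,2j+2}^c$ and push the $c$ past the $(ba)$ using the commutation consequences of relation $R_1$ recorded in Lemma \ref{L:r1}. Concretely, by Lemma \ref{L:r1}(2) we have $x^{\tilde{c}ab} = x^{ab\tilde{c}}$ and $x^{\tilde{c}ba} = x^{ba\tilde{c}}$ for any element $x$, so $c$ commutes past any even block of alternating $a,b$. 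I would write $x_{i,2j+2}^c = x_{i,2j}^{bac}$ and then slide the $c$ leftward through $ba$ to get $x_{i,2j}^{cba}$; applying the inductive hypothesis $x_{i,2j}^c = x_{i+1,2j}$ turns this into $x_{i+1,2j}^{ba} = x_{i+1,2j+2}$, which is exactly the first claim at level $j+1$. The second identity $x_{i,2j+1}^c = x_{i-1,2j+1}$ should follow in parallel: I would relate $x_{i,2j+1} = x_{i,2j}^b$ to a shifted vertex on the same polygon, using Lemma \ref{L:r1} to commute $c$ through the intervening generators and then invoking the first claim (which runs $c$ in the $+1$ direction) together with an $a^2 = \mathrm{id}$ / $b^2 = \mathrm{id}$ type cancellation from $J_{aa}$, $J_{bb}$ to account for the orientation reversal that produces $x_{i-1,2j+1}$ rather than $x_{i+1,2j+1}$.

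The step I expect to be the main obstacle is getting the \emph{direction} of the $c$-edge correct on the odd-indexed polygons, i.e. explaining cleanly why the $c$-operation increases the first index on even levels but decreases it on odd levels. This is the asymmetry visible in Figure \ref{F:Q(2,n)Lkodd} between adjacent nested polygons, and it must come from carefully tracking how conjugating by $b$ (which passes between an even level and the odd level above it) interacts with the commutation relations of Lemma \ref{L:r1}; the bookkeeping of which of $x^{\tilde{c}ab} = x^{ab\tilde{c}}$ versus $x^{\tilde{c}ba} = x^{ba\tilde{c}}$ to apply, and in which order, is where a sign/direction error would most easily creep in. Everything else is routine re-association justified by Lemma \ref{leftassoc} and the involutivity relations $J_{aa}, J_{bb}$.
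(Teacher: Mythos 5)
Your treatment of the first identity is fine and is essentially the paper's argument in inductive packaging: the paper iterates Lemma \ref{L:r1}(2) to get $x^{(ba)^j c} = x^{c(ba)^j}$ and computes $x_{i,2j}^c = x_{i,0}^{(ba)^j c} = x_{i,0}^{c(ba)^j} = x_{i+1,2j}$ in one line, which is the same content as your base case $J_{ac}$ plus the step $x_{i,2j}^{bac} = x_{i,2j}^{cba}$.

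The second identity is where your plan has a genuine gap, and it sits exactly at the step you flagged as the main obstacle and left open. The orientation reversal on odd levels cannot be produced by Lemma \ref{L:r1} together with $J_{aa}$, $J_{bb}$: those relations are direction-neutral, and $c$ cannot be commuted past the odd-length word $(ba)^j b$, since Lemma \ref{L:r1}(2) only moves $\tilde{c}$ past even blocks $ab$, $ba$. Trying to push $c$ past the leftover single $b$ via Lemma \ref{L:r1}(1) just cycles: with $y = x_{i,0}^{(ba)^j}$, one gets $y^{bc} = y^{acab}$, and applying Lemma \ref{L:r1}(3) to $(ac)(ab)$ returns $y^{ba(ac)} = y^{bc}$. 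Concretely, your strategy of ``invoke the first claim at level $2j$ plus a cancellation'' requires the identity $x_{i,2j}^{bc} = x_{i,2j}^{\bar{c}b}$, i.e.\ $x_{i,2j}^{(bc)^2} = x_{i,2j}$, which by Lemma \ref{L:r1}(1) is $x^{(ac)^2} = x$ --- and that is \emph{not} a formal consequence of involutivity; it is where the relations $P_3: a^{cac} = a$ and $P_4: b^{cbc} = b$ (via $W_4$, $W_5$) enter, propagated around the polygon by induction. In the paper this is packaged as Lemma \ref{L:ends}(1), $x_{i,0}^a = x_{n-i,0}$, which must be proved \emph{before} Lemma \ref{L:sides} and which your proposal never invokes. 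The paper's proof of the second identity uses it twice: $x_{i,2j+1}^c = x_{i,0}^{(ba)^j b c} = x_{n-i,0}^{a(ba)^j b c} = x_{n-i,0}^{(ab)^{j+1}c}$, which is now an even alternating word, so Lemma \ref{L:r1} slides $c$ to the front, giving $x_{n-i+1,0}^{(ab)^{j+1}} = x_{i-1,0}^{(ba)^j b} = x_{i-1,2j+1}$; the two applications of Lemma \ref{L:ends} are precisely what convert the $+1$ shift on the inner polygon into the $-1$ shift at the odd level. Without Lemma \ref{L:ends} (equivalently, without $x_{i,0}^{ca} = x_{i,0}^{a\bar{c}}$ from $P_3$), your plan cannot close.
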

\begin{proof}
The first part of the lemma comes directly from Lemma \ref{L:r1}.
$$x_{i,2j}^c = x_{i,0}^{(ba)^j c} = x_{i,0}^{c (ba)^j} = x_{i+1, 0}^{(ba)^j} = x_{i+1,2j}.$$
The second part uses both Lemma \ref{L:r1} and Lemma \ref{L:ends}.
\begin{align*}
x_{i,2j+1}^c &= x_{i,0}^{(ba)^j bc} = x_{n-i,0}^{a(ba)^jb c} = x_{n-i,0}^{(ab)^{j+1} c} = x_{n-i ,0}^{c (ab)^{j+1}} \\
&= x_{n-i+1,0}^{(ab)^{j+1}} = x_{n-(i-1),0}^{a(ba)^j b} = x_{i-1,0}^{(ba)^j b} = x_{i-1, 2j+1}.
\end{align*}
\end{proof}

We have now traced out all the edges in the components of the graphs in Figure \ref{F:Q(2,n)Lkodd} containing $a$ and $b$.  We still need to show that there is no further collapsing as we apply the secondary relations at each vertex. We do this by applying Lemmas \ref{L:ends} and \ref{L:sides}, along with the construction of the vertices $x_{i,j}$.

\begin{lemma} \label{L:secondary}
The Cayley graphs described above (and illustrated in Figure \ref{F:Q(2,n)Lkodd}) satisfy the secondary relations $W_1, W_2, W_3, W_4$ and $W_5$.
\end{lemma}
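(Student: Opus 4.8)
The plan is to turn the statement into a finite, mechanical check by first recording exactly how each of $S_a$, $S_b$, $S_c$ acts on every vertex $x_{i,m}$, and then substituting these formulas into each secondary word. From Lemma~\ref{L:sides} (together with the traces of $x^{c^n}=x$ on the innermost and outermost polygons) the generator $c$ increments the first index modulo $n$ on every even-indexed polygon and decrements it on every odd-indexed one. From the defining relations $x_{i,2j-1}^a=x_{i,2j}$ and $x_{i,2j}^b=x_{i,2j+1}$ we get the radial action of $a$ and $b$; and because the $T_{2,k}$ component carries the label $2$, the relations $x^{a^2}=x$ and $x^{b^2}=x$ make $S_a$ and $S_b$ involutions, which also fixes $x_{i,m}^a$ for even $m$ and $x_{i,m}^b$ for odd $m$. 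The only non-radial behaviour occurs at the two boundary polygons, where Lemma~\ref{L:ends} supplies $x_{i,0}^a=x_{n-i,0}$ and $x_{i,2t}^b=x_{n-2-i,2t}$. Assembling this into a single action table is the first step.

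Once the table is in place, each $W_\ell$ becomes the claim that a fixed word fixes every $x_{i,m}$, verified by reading off its net effect on the two indices with a case split on the parity of $m$ and on whether $x_{i,m}$ sits on the innermost polygon ($m=0$) or the outermost one ($m=2t$). Three of the five are immediate: $W_1$ was already seen to follow from Lemma~\ref{L:r1}(3); $W_4$ is equivalent to $x^{(ca)^2}=x^{(ac)^2}$ and involves only $a$ and $c$, so the innermost polygon is the sole exceptional case; and $W_5$ is equivalent to $x^{(cb)^2}=x^{(bc)^2}$ and involves only $b$ and $c$, so the outermost polygon is the sole exceptional case. In each of these one traces a four-letter word through the table and checks that both indices return to their starting values, a one- or two-line computation per case.

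The hard part will be $W_2$ and $W_3$. These are already shown to be equivalent and to reduce to $x^{(ab)^{2t}\,ca\bar{c}b}=x$, and unlike the short relations this word sweeps the entire radial structure: in the interior a single $ab$ moves odd-indexed polygons outward and even-indexed ones inward by two steps, while at the boundary it reflects the first index (via $i\mapsto n-i$ or $i\mapsto n-2-i$) whenever the sweep strikes the innermost or outermost polygon. The difficulty is therefore one of bookkeeping across all $2t$ applications while accounting correctly for these boundary reflections. I would first extract the action of $ab$ from the table, then prove by induction on $j$ a closed formula for $(ab)^{j}$, and finally set $j=2t$ and append the tail $ca\bar{c}b$. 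One available simplification is that, by Lemma~\ref{L:r1}(4), the block $ca\bar{c}b$ commutes with $ab$, hence with $(ab)^{2t}$, so it may be moved to the front; this separates the long sweep from the short tail and recasts the goal as $x^{ca\bar{c}b}=x^{(ba)^{2t}}$. Getting this global sweep and its boundary interactions exactly right is where essentially all the work lies; the rest is routine given the action table.
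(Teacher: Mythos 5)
Your overall framework is the same as the paper's: compile the action of $a$, $b$, $c$ on the vertices $x_{i,m}$ from the radial definitions together with Lemmas \ref{L:ends} and \ref{L:sides}, reduce $W_2$ and $W_3$ to the single word $x^{(ab)^{2t}ca\bar{c}b}=x$, and verify each relation vertex by vertex with a parity/boundary case split. But there are two problems. First, your dismissal of $W_1$ is circular. Lemma \ref{L:r1} is a statement about the quandle $Q_{(2,n)}(L_k)$, derived from the primary relation $R_1$; what Lemma \ref{L:secondary} asserts is that the \emph{candidate graph} satisfies these operator identities, and the graph-level content of ``$x^{aca}=x^{bcb}$ at every vertex'' is precisely $W_1$ (given that the $a$- and $b$-edges are involutions by construction). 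Until the graph is known to be the Cayley graph --- which is exactly what this lemma establishes --- quandle identities do not transfer to the graph: in Winker's method an unverified secondary relation is precisely what could force further collapse. The paper accordingly checks $W_1$ explicitly at every vertex (four case families, including the boundary cases at $2j=2t$ and $2j+1=1$). The same caveat applies to your proposed simplification of $W_2$: commuting the block $ca\bar{c}b$ past $(ab)^{2t}$ via Lemma \ref{L:r1}(4) is legitimate on the graph only after $W_1$ has been verified there, since all of Lemma \ref{L:r1} is a group-theoretic consequence of $W_1$ together with the built-in involutions and $n$-cycles. The fix is cheap inside your own framework --- $W_1$ is a six-letter trace through your action table --- but it must be done, and done first.

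Second, you label the $W_2$/$W_3$ sweep as ``where essentially all the work lies'' and leave it as a plan (closed formula for $(ab)^j$ by induction, then append the tail). That computation is the heart of the lemma and a proof needs it carried out: the two reflections act differently on the first index ($i\mapsto n-i$ at row $0$ versus $i\mapsto n-2-i$ at row $2t$), and off-by-one errors at the crossings are exactly where such bookkeeping fails; the paper itself must isolate special cases at $x_{i,1}$ (and at $x_{i,2t}$ for $W_1$) to keep subscripts in range. The paper executes the verification in full, organized not by an induction on powers of $ab$ but by the identities $x_{i,2j}=x_{i,0}^{(ba)^j}$, which collapse the radial travel in one step and leave three short case computations ($x_{i,2j}$, $x_{i,2j+1}$, and the special case $x_{i,1}$). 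Note also that your commuted form $x^{ca\bar{c}b}=x^{(ba)^{2t}}$ does not avoid the sweep: the right-hand side still traverses both boundaries, so the bookkeeping is only relocated. A minor point in your favor: for $W_4$ and $W_5$ the paper proves the stronger statements $x^{(ca)^2}=x^{(\bar{c}a)^2}=x^{(cb)^2}=x^{(\bar{c}b)^2}=x$ at every vertex, while your equivalent-form check $x^{(ca)^2}=x^{(ac)^2}$ (using the involutivity of $a$) is equally valid, and your identification of the single exceptional polygon in each of these cases is correct.
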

\begin{proof}
We consider each secondary relation in turn.  For $W_1$ we observe, for $0 \leq i < n$ and $0 \leq j \leq t-1$,
$$x_{i,2j}^{bacab\bar{c}} = x_{i,2j+2}^{cab\bar{c}} = x_{i+1,2j+2}^{ab\bar{c}} = x_{i+1,2j}^{\bar{c}} = x_{i,2j}$$
and for $1 \leq j \leq t-1$
$$x_{i,2j+1}^{bacab\bar{c}} = x_{i,2j-1}^{cab\bar{c}} = x_{i-1,2j-1}^{ab\bar{c}} = x_{i-1,2j+1}^{\bar{c}} = x_{i,2j+1}$$
We have two special cases, when $2j = 2t$ and $2j+1 = 1$, which use Lemma \ref{L:ends}:
\begin{align*}
x_{i,2t}^{bacab\bar{c}} &= x_{(n-2)-i,2t}^{acab\bar{c}} = x_{(n-2)-i,2t-1}^{cab\bar{c}} = x_{(n-3)-i,2t-1}^{ab\bar{c}} \\
&= x_{(n-3)-i,2t}^{b\bar{c}} = x_{i+1,2t}^{\bar{c}} = x_{i,2t}.\\
x_{i,1}^{bacab\bar{c}} &= x_{i,0}^{acab\bar{c}} = x_{n-i,0}^{cab\bar{c}} = x_{n-i+1,0}^{ab\bar{c}} = x_{i-1,0}^{b\bar{c}} = x_{i-1,1}^{\bar{c}} = x_{i,1}.
\end{align*}

\medskip

Recall that relations $W_2$ and $W_3$ are both equivalent to $x^{(ab)^{2t} ca\bar{c}b} = x$; we will verify this version of the relation.  Suppose $0 \leq i < n$ and $0 \leq j \leq t$. (Note there is a special case for $x_{i,1}$, to avoid negative subscripts; and in the second case, $j \leq t-1$.)
\begin{align*}
x_{i,2j}^{(ab)^{2t} ca\bar{c}b} &= x_{i,0}^{(ab)^{2t-j}ca\bar{c}b} = x_{n-i,0}^{(ba)^{2t-j}a ca\bar{c}b} = x_{n-i,2t}^{(ba)^{t-j}aca\bar{c}b} = x_{i-2,2t}^{(ab)^{(t-j)}baca\bar{c}b} \\
&= x_{i-2,2j}^{baca\bar{c}b} = x_{i-2,2j+2}^{ca\bar{c}b} = x_{i-1,2j+2}^{a\bar{c}b} = x_{i-1,2j+1}^{\bar{c}b} = x_{i,2j+1}^b = x_{i,2j}. \\
x_{i,2j+1}^{(ab)^{2t} ca\bar{c}b} &= x_{i,2t-1}^{(ab)^{t+j+1}ca\bar{c}b} = x_{i,2t}^{(ba)^{t+j}bca\bar{c}b} = x_{(n-2)-i,2t}^{(ab)^{t+j} ca\bar{c}b} = x_{(n-2)-i,0}^{(ab)^{j} ca\bar{c}b} = x_{i+2,0}^{(ba)^{j} aca\bar{c}b} \\
&= x_{i+2,2j}^{aca\bar{c}b} = x_{i+2,2j-1}^{ca\bar{c}b} = x_{i+1,2j-1}^{a\bar{c}b} = x_{i+1,2j}^{\bar{c}b} = x_{i,2j}^b = x_{i,2j+1}. \\ 
x_{i,1}^{(ab)^{2t} ca\bar{c}b} &= x_{i,2t-1}^{(ab)^{t+1}ca\bar{c}b} = x_{i,2t}^{(ba)^{t}bca\bar{c}b} = x_{(n-2)-i,2t}^{(ab)^{t} ca\bar{c}b} = x_{(n-2)-i,0}^{ca\bar{c}b} \\
&= x_{(n-1)-i,0}^{a\bar{c}b} = x_{i+1,0}^{\bar{c}b} = x_{i,0}^b = x_{i,1}. 
\end{align*}

\medskip

To verify relations $W_4$ and $W_5$, we actually show that $x^{(ca)^2} = x^{(\bar{c}a)^2} = x^{(cb)^2} = x^{(\bar{c}b)^2} = x$ for all $x$.  This immediately implies relations $W_4$ and $W_5$ hold.  We will show $x^{(ca)^2} = x$; the other cases are similar. Suppose $0 \leq i < n$ and $0 \leq j \leq t$. (Note there is a special case for $x_{i,0}$, to avoid negative subscripts; and in the second case, $j \leq t-1$.)
\begin{align*}
x_{i,2j}^{caca} &= x_{i+1,2j}^{aca} = x_{i+1,2j-1}^{ca} = x_{i,2j-1}^a = x_{i,2j}. \\
x_{i,2j+1}^{caca} &= x_{i-1,2j+1}^{aca} = x_{i-1,2j+2}^{ca} = x_{i,2j+2}^a = x_{i,2j+1}. \\
x_{i,0}^{caca} &= x_{i+1,0}^{aca} = x_{n-i-1,0}^{ca} = x_{n-i,0}^a = x_{i,0}. 
\end{align*}

This verifies that all the secondary relations hold for these graphs.
\end{proof}

Finally, we turn to the component containing $c$.  We start with the vertex for $c$, with a loop labeled $c$.  We have a second vertex at $c^a$, and by primary relation $R_1$ we know $c^a = c^b$. Then we observe that, by relation $W_2$, 
$$c^{(ab)^{2t} ca\bar{c}b} = c \implies c^{ca\bar{c}b} = c \implies c^{a\bar{c}} = c^b \implies (c^a)^{\bar{c}} = c^a \implies (c^a)^c = c^a.$$
So there is also a loop labeled $c$ at $c^a$.  It is now easy to check that all the secondary relations are satisfied, so the second component is just these two vertices.

This proves part (1) of Theorem \ref{T:QNLk}.

\subsection{$L_k$ with $k$ even, $k \neq 0$} \label{SSS:even}

We now turn to the case when $k = 2t$ is even (and $k \neq 0$). In this case the link has three components, and the quandle $Q_{(2,2,n)}(L_k)$ has the following presentation (this can be seen through an easy inductive argument), where $x$ is any of the generators $a, b, c$:
$$Q_{(2,2,n)}(L_k) = \langle a, b, c \mid c^{ab} = c,\ a^{(ba)^{t-1} bc} = a,\ b^{(ab)^t c} = b, x^{a^2} = x^{b^2} = x^{c^n} = x \rangle.$$
Once again, we will denote the relations $R_1$, $R_2$, $R_3$ and $J_{xy}$ (where $J_{xy}$ is the relation $x^{y^{n_y}} = x$, for $x, y \in \{a, b, c\}$). Also, as with $k$ odd (see Remark \ref{R:k<0}), it is easy to see that $Q_{(2,2,n)}(L_{-k})$ is isomorphic to $Q_{(2,2,n)}(L_k)$, so we may assume $k > 0$. As in Section \ref{SSS:odd}, we will construct the Cayley graph for $Q_{(2,2,n)}(L_k)$; the graphs look slightly different for $n$ odd and $n$ even.  Figures \ref{F:Q(2,2,n)Lkeven5} and \ref{F:Q(2,2,n)Lkeven6} show the Cayley graphs for $n = 5$ and $n = 6$, respectively.  As we can see, they look very similar to the graphs when $k$ is odd shown in Figure \ref{F:Q(2,n)Lkodd}, except that the large component has been split into two isomorphic pieces.

\begin{figure}[htbp]
$$\includegraphics[height=7.5in]{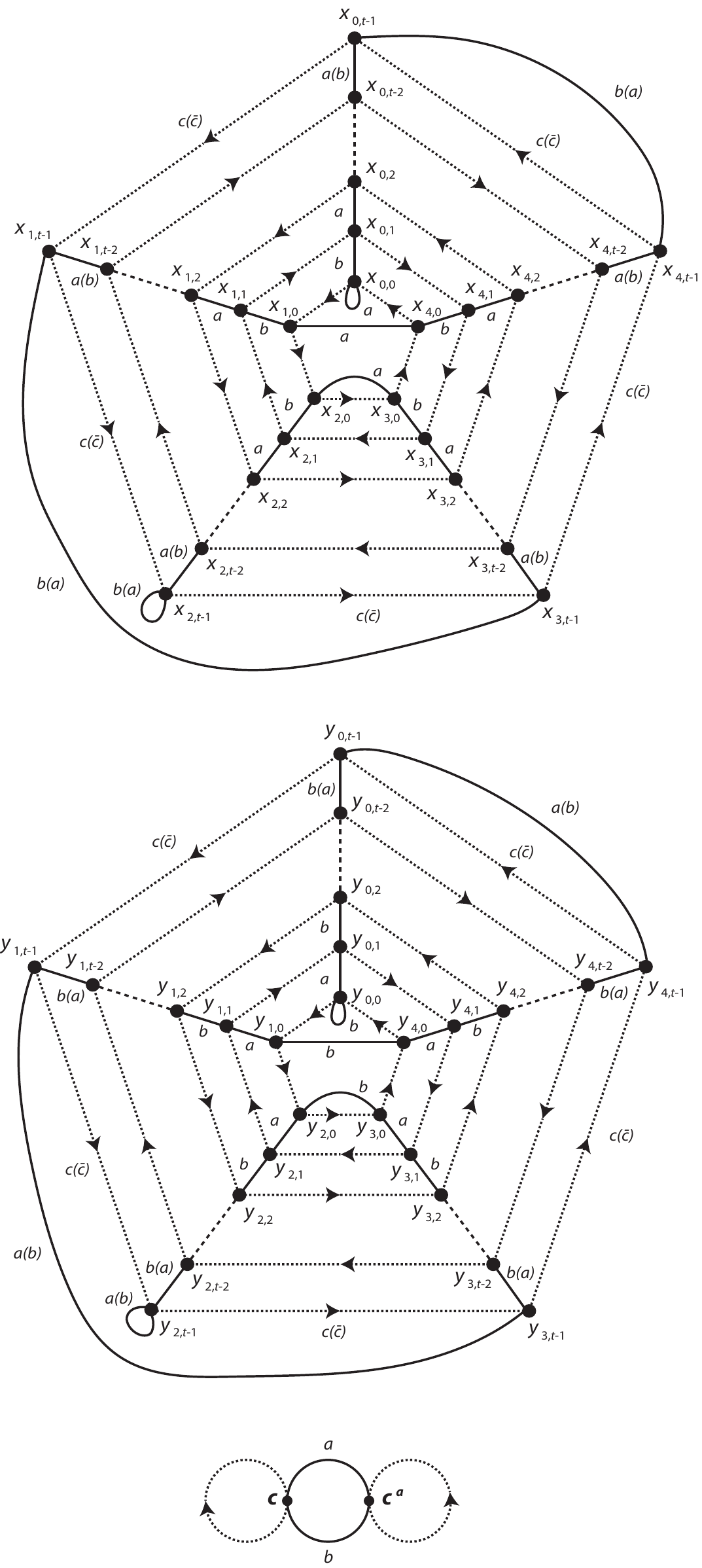}$$
\caption{$Q_{(2,5)}(L_k)$ for $k = 2t$; dotted edges represent generator $c$. Labels such as $a(b)$ represent $a$ when $t$ is even and $b$ when $t$ is odd.}
\label{F:Q(2,2,n)Lkeven5}
\end{figure}

\begin{figure}[htbp]
$$\includegraphics[height=7.5in]{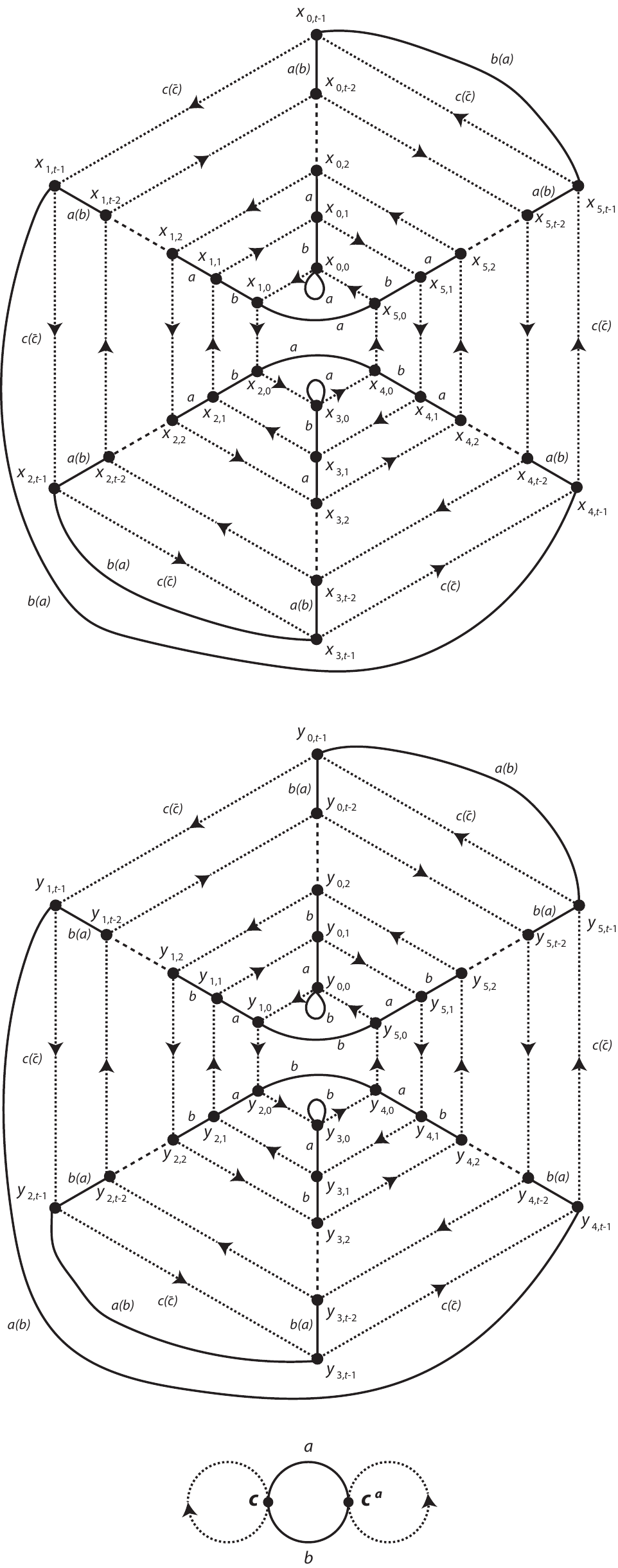}$$
\caption{$Q_{(2,6)}(L_k)$ for $k = 2t$; dotted edges represent generator $c$. Labels such as $a(b)$ represent $a$ when $t$ is even and $b$ when $t$ is odd.}
\label{F:Q(2,2,n)Lkeven6}
\end{figure}

We are going to derive another presentation for $Q_{(2,2,n)}(L_k)$ that will be easier to use with Winker's algorithm.  Note that, since relation $R_1$ is the same as in Section \ref{SSS:odd}, the relations in Lemma \ref{L:r1} still hold. The following relations from Lemma \ref{L:relations} also hold when $k$ is even (to avoid confusion, we will give them the same labels we did in the previous section).

\begin{lemma} \label{L:relations2}
The following relations hold in $Q_{(2,2,n)}(L_k)$ ($k$ even).
\begin{enumerate}
	\item $P_3: a^{cac} = a$
	\item $P_4: b^{cbc} = b$
\end{enumerate}
\end{lemma}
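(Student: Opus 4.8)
The plan is to imitate the proof of Lemma \ref{L:relations}(3)--(4) from the odd case, substituting the even forms of $R_2$ and $R_3$. The three tools I would lean on are: axiom A1, in the form $a^a = a$ and $b^b = b$; the relations $J_{aa}$ and $J_{bb}$ (that is, $x^{a^2} = x^{b^2} = x$), which let me treat $a$ and $b$ as involutions inside any exponent, so that $\bar a, \bar b$ may be replaced by $a, b$ and $\overline{(ab)^t}$ by $(ba)^t$; and Lemma \ref{L:r1} (still available because $R_1$ is unchanged), which lets me commute a single $c$ or $\bar c$ past any product of $ab$- and $ba$-pairs. First I would put the two primary relations into a more symmetric form.

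The key step is to extract the two intermediate relations $a^c = a^{(ba)^t}$ and $b^c = b^{(ba)^t}$. For the first, I would prepend $a = a^a$ and collapse the exponent: $a^{(ba)^{t-1}bc} = a^{a(ba)^{t-1}bc} = a^{(ab)^t c}$, using the free-group identity $a(ba)^{t-1}b = (ab)^t$; since $R_2$ makes this equal to $a$ and Lemma \ref{L:r1} gives $a^{(ab)^t c} = a^{c(ab)^t}$, I obtain $a^{c(ab)^t} = a$ and hence, by A2, $a^c = a^{\overline{(ab)^t}} = a^{(ba)^t}$. For the second I would not even need A1: the relation $R_3$ already reads $b^{(ab)^t c} = b$, so Lemma \ref{L:r1} gives $b^{c(ab)^t} = b$ and thus $b^c = b^{(ba)^t}$.

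With these identities in hand, $P_3$ and $P_4$ follow from two short re-associations. For $P_3$ I would compute $a^{cac} = ((a^c)^a)^c = (a^{(ba)^t a})^c = (a^{(ba)^{t-1}b})^c = a^{(ba)^{t-1}bc} = a$, where the middle equality uses the cancellation $(ba)^t a = (ba)^{t-1}b$ and the last step is exactly $R_2$. Symmetrically, for $P_4$ I would compute $b^{cbc} = ((b^c)^b)^c = (b^{(ba)^t b})^c = (b^{(ab)^t})^c = b^{(ab)^t c} = b$, using $(ba)^t b = b(ab)^t$ together with $b^b = b$ to simplify the exponent, and finishing with $R_3$.

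I do not anticipate a genuine obstacle, since the argument is entirely formal. The one place that needs care is the exponent bookkeeping: the off-by-one between $(ba)^{t-1}b$ and $(ab)^t$, and the repeated silent use of $a^2 = b^2 = 1$ and $a^a = a$, $b^b = b$, where a single misplaced generator would break a cancellation. Accordingly, I would verify each word identity independently ($a(ba)^{t-1}b = (ab)^t$, $(ba)^t a = (ba)^{t-1}b$, and $(ba)^t b = b(ab)^t$) before chaining the computations together.
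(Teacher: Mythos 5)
Your proof is correct and takes essentially the same route as the paper's: both extract the intermediate identities $a^c = a^{(ba)^t}$ and $b^c = b^{(ba)^t}$ from $R_2$ and $R_3$ using Lemma~\ref{L:r1} together with $a^a = a$, $b^b = b$ and the relations $x^{a^2} = x^{b^2} = x$, and then conclude $P_3$ and $P_4$ by one further application of $R_2$ and $R_3$ respectively. The only difference is cosmetic bookkeeping in the final re-association (the paper passes through $a^c = a^{\bar{c}a}$ and $b^c = b^{(ab)^t b}$, while you substitute directly into $a^{cac}$ and $b^{cbc}$), so there is nothing substantive to change.
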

\begin{proof}
From relation $R_2$, $a = a^{(ba)^{t-1} b c} = a^{(ab)^t c} = a^{c(ab)^{t}}$ (by Lemma \ref{L:r1}).  Hence, $a^c = a^{(ba)^{t}} = a^{(ba)^{t-1} b (c \bar{c}) a} = a^{\bar{c} a}$ by relation $R_2$.  This implies $a^{cac} = a$, giving relation $P3$.

From relation $R_3$, $b = b^{(ab)^t c} = b^{c(ab)^t}$.  Hence $b^c = b^{(ba)^t} = b^{(ab)^t b}$, which means $b^{cbc} = b^{(ab)^t c} = b$, proving relation $P_4$.
\end{proof}

So we can give a new presentation for $Q_{(2,2,n)}(L_k)$:
$$Q_{(2,2,n)}(L_k) = \langle a, b, c \mid R_1, R_2, R_3, P_3, P_4, \{J_{xy} \mid x, y \in \{a,b,c\}\} \rangle.$$
Since this contains all the relations of our original presentation, and by Lemma \ref{L:relations2} the new relations are consequences of the old ones, this presentation is equivalent to the original one.

We begin by tracing the primary relations.  We first look at the component of the graph containing generator $a$. For this component, the only primary relations we need to consider are $R_2, P_3, J_{ab}$ and $J_{ac}$ ($J_{aa}$ is an immediate consequence of the quandle axioms).  We begin with vertex $a$, which we denote $x_{0,0}$, and the loop at $x_{0,0}$ labeled $a$. Relation $J_{ac}: a^{c^n} = a$ traces out the innermost polygon in Figures \ref{F:Q(2,2,n)Lkeven5} and \ref{F:Q(2,2,n)Lkeven6}; we denote these vertices $x_{i,0}$, with $x_{i,0}^c = x_{i+1,0}$, and the first subscript taken modulo $n$.  Now, for $0 \leq j$, we let $x_{i,2j}^b = x_{i,2j+1}$ and $x_{i,2j+1}^a = x_{i,2j+2}$, until the second subscript is $t-1$; these will be the radial segments of the Cayley graphs.  We will ultimately see that these are all the vertices in this component of the Cayley graph.

We now trace out relation $R_2: a^{(ba)^{t-1} bc} = a$.  If $t$ is odd, we can rewrite this as $a^{(ba)^{(t-1)/2}b} = a^{\bar{c}(ba)^{(t-1)/2}}$, which means $x_{0,t-1}^b = x_{n-1,t-1}$.  If $t$ is even, we have $a^{(ba)^{(t-2)/2}ba} = a^{\bar{c}(ba)^{(t-2)/2}b}$, which means $x_{0,t-1}^a = x_{n-1,t-1}$.  This gives the edge connecting $x_{0,t-1}$ to $x_{n-1,t-1}$ in Figures \ref{F:Q(2,2,n)Lkeven5} and \ref{F:Q(2,2,n)Lkeven6}; the edge is labeled $b$ if $t$ is odd and $a$ if $t$ is even.

Finally, we trace out relation $P_3: a^{cac} = a$.  We can rewrite this as $a^{ca} = a^{\bar{c}}$, or $x_{1,0}^a = x_{n-1,0}$, which gives the edge labeled $a$ between $x_{1,0}$ and $x_{n-1,0}$.

Now we turn to the secondary relations:
\begin{align*}
W_1&:\ x^{c^{ab}} = x^c \implies x^{bacab\bar{c}} = x \\
W_2&:\ x^{a^{(ba)^{t-1} b c}} = x^a \implies x^{\bar{c} (ba)^{2t-1} b c a} = x \\
W_3&:\ x^{b^{(ab)^tc}} = x^b \implies x^{\bar{c}(ba)^{2t} b c b} = x \\
W_4&:\ x^{a^{cac}} = x^a \implies x^{(\bar{c}a)^2(ca)^2} = x \\
W_5&:\ x^{b^{cbc}} = x^b \implies x^{(\bar{c}b)^2(cb)^2} = x
\end{align*}
Observe that $W_2$ can be rewritten as $x^{\bar{c} (ba)^{2t} a c a} = x$.  By Lemma \ref{L:r1}, $x^{aca} = x^{bcb}$ for any $x$ in the quandle, so $W_2$ is equivalent to $W_3$, and we need only check one of them.

We now prove a lemma that traces the edges inside the inner polygon.

\begin{lemma} \label{L:ends2}
For every $i$ with $0 \leq i < n$, $x_{i,0}^a = x_{n-i,0}$ (where the first subscript is considered modulo $n$).
\end{lemma}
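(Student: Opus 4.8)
The plan is to mirror the argument used for Lemma \ref{L:ends}(1) in the odd case, since every ingredient survives into the even case: the innermost polygon is still traced by $J_{ac}$ (so $x_{i,0}^c = x_{i+1,0}$, subscripts mod $n$), relation $P_3\colon a^{cac}=a$ holds by Lemma \ref{L:relations2}, and the secondary relation $W_4$ takes exactly the same form. I would prove the statement by induction on $i$, but threading a second invariant through the induction. Specifically, I claim that for each $i$ with $0 \le i < n$ both
$$x_{i,0}^a = x_{n-i,0} \qquad\text{and}\qquad x_{i,0}^{(ca)^2} = x_{i,0}$$
hold. The auxiliary invariant is what makes the induction go, since it is precisely what lets me rewrite $x_{i,0}^{ca}$ as $x_{i,0}^{a\bar c}$ at each step (using also that $J_{aa}$ gives $x^a = x^{\bar a}$ for every $x$).

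For the base case $i = 0$ we have $x_{0,0} = a$, so $x_{0,0}^a = x_{0,0} = x_{n,0}$. By relation $P_3$, $x_{0,0}^{cac} = x_{0,0} = x_{0,0}^a$, and since $x^a = x^{\bar a}$ this rearranges to $x_{0,0}^{(ca)^2} = x_{0,0}$. For the inductive step I assume both invariants at $i$ (so also $x_{n-i,0}^a = x_{i,0}$). Using $x^{caca}=x$ together with $x^a=x^{\bar a}$ to replace $ca$ by $a\bar c$, then the inductive hypothesis and $x_{j,0}^{\bar c}=x_{j-1,0}$, I compute
$$x_{i+1,0}^a = x_{i,0}^{ca} = x_{i,0}^{a\bar c} = x_{n-i,0}^{\bar c} = x_{n-i-1,0} = x_{n-(i+1),0}.$$
To re-establish the auxiliary invariant I apply $W_4$ in the form $x^{(ca)^2}=x^{(ac)^2}$ and reduce using the relation just proved together with the inductive hypothesis:
$$x_{i+1,0}^{(ca)^2} = x_{i+1,0}^{(ac)^2} = x_{n-(i+1),0}^{cac} = x_{n-i,0}^{ac} = x_{i,0}^c = x_{i+1,0}.$$
This closes the induction.

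The computation is entirely routine, so I do not expect a genuine obstacle. The only point requiring care is the same one that arises in the odd case: the single relation $x_{i,0}^a = x_{n-i,0}$ is not self-propagating, so the real content is recognizing that one must carry the invariant $x_{i,0}^{(ca)^2} = x_{i,0}$ alongside it. Since $P_3$ and $W_4$ hold verbatim for even $k$, no new idea beyond those of Lemma \ref{L:ends} is needed, and the argument transfers directly.
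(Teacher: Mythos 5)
Your proposal is correct and matches the paper's proof exactly: the paper proves Lemma \ref{L:ends2} by noting the argument is identical to that of Lemma \ref{L:ends}(1), which is precisely the induction you carry out, including the key point of threading the auxiliary invariant $x_{i,0}^{(ca)^2} = x_{i,0}$ through the induction via $P_3$ and $W_4$. Your explicit check that $P_3$ (via Lemma \ref{L:relations2}) and $W_4$ hold verbatim in the even case is exactly the justification the paper leaves implicit.
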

\begin{proof}
The proof is identical to the proof of part (1) of Lemma \ref{L:ends}.
\end{proof}

We use this lemma, along with Lemma \ref{L:r1}, to trace out the sides of the nested polygons.  The proof is identical to Lemma \ref{L:ends}, using Lemma \ref{L:ends2} in place of Lemma \ref{L:ends}.

\begin{lemma} \label{L:sides2}
For every $i, j$ with $0 \leq i < n$ and $0 \leq j$, we have $x_{i, 2j}^c = x_{i+1,2j}$ and $x_{i,2j+1}^c = x_{i-1,2j+1}$ (where the second subscript is at most $t-1$).
\end{lemma}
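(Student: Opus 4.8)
The plan is to replay the proof of Lemma~\ref{L:sides} almost verbatim, substituting Lemma~\ref{L:ends2} wherever the odd-case argument invoked part~(1) of Lemma~\ref{L:ends}. The only ingredients I need are the closed forms for the vertices produced during the tracing step and the commutation consequences of relation $R_1$ recorded in Lemma~\ref{L:r1}. From the construction of this component (the radial identifications $x_{i,2j}^b = x_{i,2j+1}$ and $x_{i,2j+1}^a = x_{i,2j+2}$, together with the inner-polygon identification $x_{i,0}^c = x_{i+1,0}$), an immediate induction on $j$ yields $x_{i,2j} = x_{i,0}^{(ba)^j}$ and $x_{i,2j+1} = x_{i,0}^{(ba)^j b}$, exactly as in the odd case. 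These expressions are what make the verification purely formal, since $c$ commutes past powers of $ba$ and $ab$.

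For the first identity I would compute $x_{i,2j}^c = x_{i,0}^{(ba)^j c}$ and then slide the $c$ to the left through $(ba)^j$ using Lemma~\ref{L:r1}, which gives $x^{\tilde{c}ba} = x^{ba\tilde{c}}$ and hence inductively $x^{(ba)^j c} = x^{c(ba)^j}$; this produces $x_{i,0}^{c(ba)^j} = x_{i+1,0}^{(ba)^j} = x_{i+1,2j}$, where the last two steps are the inner-polygon identification and the definition of $x_{i+1,2j}$. For the second identity I would rewrite $x_{i,2j+1}^c = x_{i,0}^{(ba)^j b c}$ and apply Lemma~\ref{L:ends2} to replace $x_{i,0}$ by $x_{n-i,0}^a$ — precisely the step where the even-case ends lemma stands in for the odd-case one. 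The word becomes $x_{n-i,0}^{(ab)^{j+1} c}$, I push the $c$ across with Lemma~\ref{L:r1} to obtain $x_{n-i+1,0}^{(ab)^{j+1}}$, and a second application of Lemma~\ref{L:ends2} (at $x_{n-(i-1),0}^a = x_{i-1,0}$) collapses this to $x_{i-1,0}^{(ba)^j b} = x_{i-1,2j+1}$.

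The substantive content is therefore entirely contained in the two "sliding" moves from Lemma~\ref{L:r1} and the two uses of the ends lemma; everything else is the reassociation permitted by the closed forms above. The only genuine care required is the index bookkeeping: keeping the first subscript modulo $n$ and restricting the second subscript to the range $0 \le 2j,\, 2j+1 \le t-1$ so that each vertex being named has in fact already been introduced during the tracing. The main (mild) obstacle is confirming that this range causes no boundary issues — and here the even case is actually cleaner than the odd one, because the large component has split into two isomorphic pieces with no separate outermost $b$-polygon, so none of the special top-row cases from the proof of Lemma~\ref{L:sides} arise and the single inductive pattern suffices throughout.
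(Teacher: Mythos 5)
Your proposal is correct and is exactly the paper's argument: the paper dispatches Lemma~\ref{L:sides2} by saying its proof is identical to that of Lemma~\ref{L:sides} with Lemma~\ref{L:ends2} substituted for Lemma~\ref{L:ends}, which is precisely the replay you carry out, including the closed forms $x_{i,2j} = x_{i,0}^{(ba)^j}$, $x_{i,2j+1} = x_{i,0}^{(ba)^j b}$ and the two $c$-sliding moves from Lemma~\ref{L:r1}. (Minor quibble: the ``special top-row cases'' you mention belong to the proof of Lemma~\ref{L:secondary}, not of Lemma~\ref{L:sides}, but this does not affect your argument.)
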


Finally, we trace the edges outside the outer polygon.

\begin{lemma} \label{L:ends3}
For every $i$ with $0 \leq i < n$, $x_{i,t-1}^{a(b)} = x_{n-1-i,t-1}$, where $a(b)$ represents $a$ if $t$ is even and $b$ if $t$ is odd (and where the first subscript is considered modulo $n$).
\end{lemma}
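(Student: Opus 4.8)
The plan is to prove this exactly as Lemma~\ref{L:ends}, by a simultaneous induction on $i$, but split into two cases according to the parity of $t$, since this parity determines both the label on the outer edge and the direction in which $c$ acts along the outer polygon. By Lemma~\ref{L:sides2} the outer polygon sits at second subscript $t-1$, so when $t$ is odd (hence $t-1$ even) $c$ increments the first subscript, $x_{i,t-1}^c = x_{i+1,t-1}$, and the outer edge is labelled $b$; when $t$ is even (hence $t-1$ odd) $c$ decrements it, $x_{i,t-1}^c = x_{i-1,t-1}$, and the outer edge is labelled $a$. In each case I would carry two invariants along the induction: the pairing relation $x_{i,t-1}^{a(b)} = x_{n-1-i,t-1}$ that we want, together with an auxiliary ``turn-around'' relation, namely $x_{i,t-1}^{(cb)^2} = x_{i,t-1}$ when $t$ is odd and $x_{i,t-1}^{(\bar c a)^2} = x_{i,t-1}$ when $t$ is even. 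The choice of $\bar c$ rather than $c$ in the even case is forced: it is $\bar c$ that increments the outer polygon there, so $\bar c$ plays the role that $c$ plays in Lemma~\ref{L:ends}.

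The base case $i=0$ of the pairing relation is precisely relation $R_2$ as traced during the construction, $x_{0,t-1}^{a(b)} = x_{n-1,t-1}$. For the auxiliary relation at $i=0$ I would invoke the corresponding secondary relation, using $W_5$ to flip $(cb)^2$ into $(bc)^2$ (resp.\ $W_4$ to flip $(\bar c a)^2$ into $(a\bar c)^2$); tracing the flipped four-letter word around the outer polygon using only $R_2$ and the $c$-action of Lemma~\ref{L:sides2} returns to $x_{0,t-1}$, because the $a(b)$-edge and the relevant $c$-step both connect index $0$ to index $n-1$. For the inductive step I would first advance the pairing relation: writing $x_{i+1,t-1}$ as the $\bar c$-neighbour (resp.\ $c$-neighbour) of $x_{i,t-1}$ and using the auxiliary relation at the \emph{old} index $i$ to rewrite $\bar c a$ as $ac$ (resp.\ $cb$ as $b\bar c$), I land on $x_{n-2-i,t-1} = x_{n-1-(i+1),t-1}$ via the inductive pairing hypothesis. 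I would then re-establish the auxiliary relation at $i+1$ by applying the $W_4$/$W_5$ flip and tracing the resulting four-letter word, which now uses only the freshly proved pairing relation at $i+1$ together with its value at $i$.

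The main thing to get right --- and the place where a naive induction stalls --- is the bookkeeping of the auxiliary relation: one must use the $(\bar c a)^2$ form rather than $(ca)^2$ in the even case so that advancing the pairing relation draws on the auxiliary relation at the old index $i$, while re-establishing the auxiliary relation draws only on the pairing relation at indices $i$ and $i+1$. This ordering breaks the apparent circular dependence between the two invariants (each seemingly needing the other at index $i+1$). Everything else is the same verification of already-traced edges carried out in Lemmas~\ref{L:ends}--\ref{L:sides2}, and since no two distinct vertices $x_{i,t-1}$ are ever forced to coincide, the outer polygon closes up exactly as drawn in Figures~\ref{F:Q(2,2,n)Lkeven5} and~\ref{F:Q(2,2,n)Lkeven6}.
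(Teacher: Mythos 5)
Your proof is correct and takes essentially the same route as the paper's: induction on $i$ along the outer polygon, split by the parity of $t$, with base case the traced relation $R_2$ and the inductive step powered by the secondary relation $W_4$ (resp.\ $W_5$) together with the $c$-action from Lemma~\ref{L:sides2}. The only cosmetic difference is bookkeeping: you carry the turn-around relation as an explicit second invariant (as the paper itself does in Lemma~\ref{L:ends}), whereas the paper's proof of Lemma~\ref{L:ends3} applies $W_4$/$W_5$ wholesale at each step and instead invokes the pairing hypothesis at the two preceding indices --- the two schemes are interchangeable.
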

\begin{proof}
We first consider the case when $t$ is even.  We proceed by induction on $i$; we already know that $x_{0,t-1}^a = x_{n-1,t-1}$. 

Now assume that for $0 \leq i < h \leq n/2$, $x_{i,t-1}^a = x_{n-1-i,0}$. From relation $W_4$, $x_{h-1,t-1}^{(\bar{c}a)^2(ca)^2} = x_{h-1,t-1}$.  Using Lemma \ref{L:sides2} and the inductive hypothesis, this implies
\begin{align*}
x_{h,t-1}^a &= x_{h-1,t-1}^{(a\bar{c})^2 ac} = x_{(n-1)-(h-1),t-1}^{\bar{c}a\bar{c}ac} \\
&= x_{(n-1)-(h-2),t-1}^{a\bar{c}ac} = x_{h-2,t-1}^{\bar{c}ac} \\
&= x_{h-1,t-1}^{ac} = x_{(n-1)-(h-1),t-1}^c = x_{n-1-h,t-1}.
\end{align*}
Hence, by induction, $x_{i,t-1}^{a} = x_{n-1-i,t-1}$ for $0 \leq i \leq (n-1)/2$. Since this means $x_{n-1-i,t-1}^a = x_{i,t-1}$, we also get the result for $(n-1)/2 \leq i \leq n-1$.

If $t$ is odd, the proof is almost identical, replacing $a$ with $b$.  We use relation $W_5$ instead of $W_4$, and in the inductive step we start at $x_{(n-1)-(h-1),t-1}$ instead of at $x_{h-1,t-1}$.  We conclude that $x_{(n-1)-h,t-1}^b = x_{h,t-1}$, and hence $x_{h,t-1}^b = x_{(n-1)-h,t-1}$. 
\end{proof}

We have now completely traced out the component of the Cayley graph containing $a$ (as shown in Figures \ref{F:Q(2,2,n)Lkeven5} and \ref{F:Q(2,2,n)Lkeven6}).  It remains to show there is no further collapsing by showing that the secondary relations are satisfied at every vertex.  The proof of this is very similar to Lemma \ref{L:secondary}, and is left to the reader.

The component containing $b$ is determined in almost exactly the same way, interchanging $a$ and $b$, and the component containing $c$ is easily determined as in the case when $k$ is odd.  We have completed our construction of the Cayley graph for $Q_{(2,2,n)}(L_k)$ when $k$ is even, and as a corollary we have proved part (2) of Theorem \ref{T:QNLk}.

\section{The family of links $M_k = T_k \cup C$} \label{SS:Mk}

Our last family of links, $M_k$, consists of a twist knot $T_k$ linked with another component $C$, as shown in Figure \ref{F:Mk}.  This link has two components; we will derive a presentation for the quandle $Q_{(2,3)}(M_k)$ with three generators $a$, $b$ and $c$, shown in Figure \ref{F:Mk}.

\begin{figure}[htbp]
$$\includegraphics[height=1.5in]{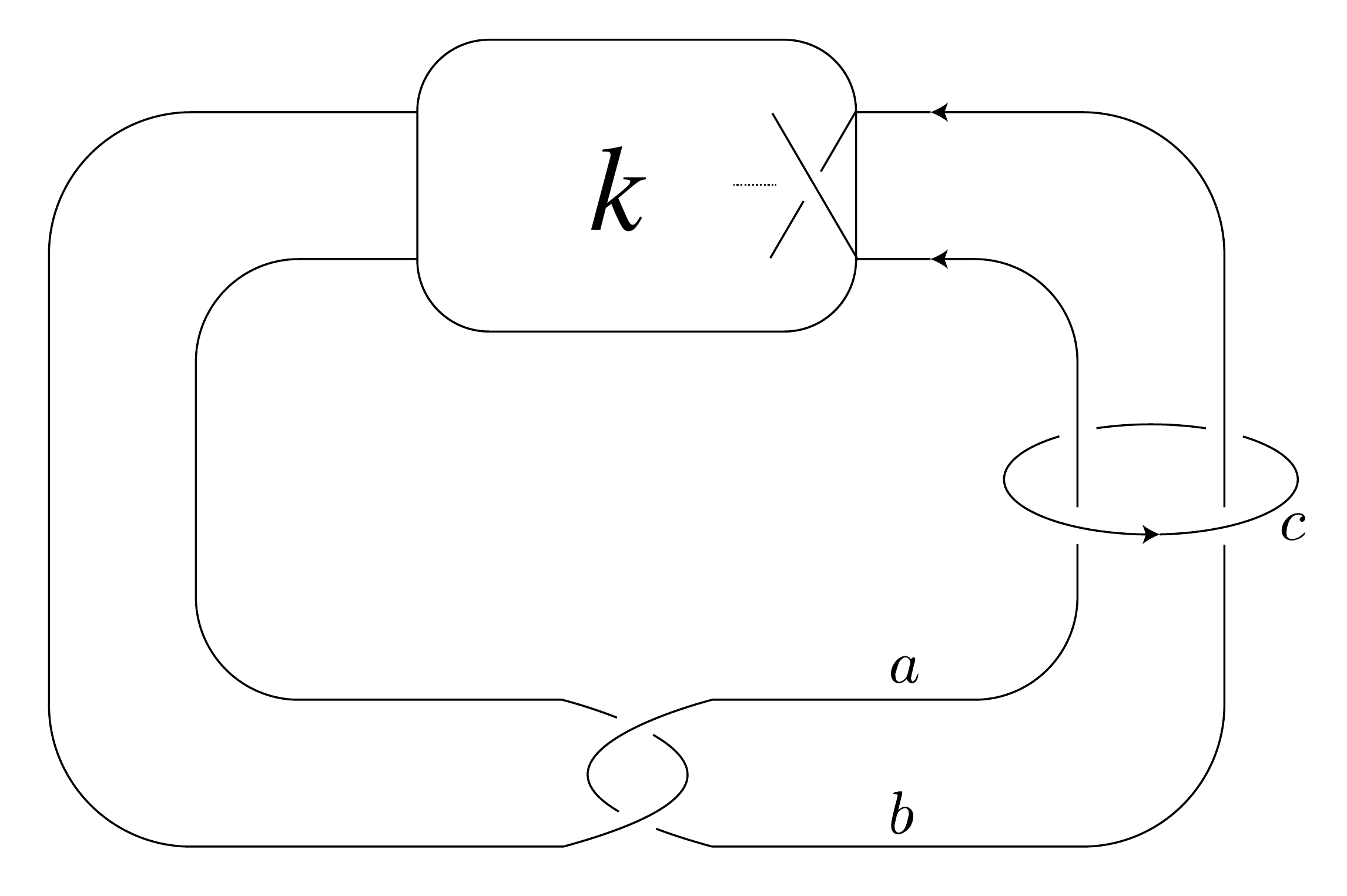}$$
\caption{$M_k = T_k \cup C$, where $T_k$ is a twist knot.}
\label{F:Mk}
\end{figure}

In the quandle $Q_{(2,3)}(M_k)$, we assume that, for any $x$ in the quandle, $x^{a^2} = x^{b^2} = x^{c^3} = x$.  We will use those relations as we derive our presentation.

The linking of the component $C$ with the twist knot gives the following relation:
$$c^{b^ca^c} = c \implies c^{\bar{c}bc\bar{c}ac} = c \implies c^{ba} = c^{\bar{c}} = c.$$
This is the same as relation $R_1$ in Section \ref{SS:Lk}, so the relations in Lemma \ref{L:r1} still hold. We now turn to the relations induced by the twists along the top of the diagram, and the clasp at the bottom.

The following lemma (proved in \cite{Me}) is the result of an easy inductive argument.

\begin{lemma} \label{L:halftwists}
The arcs on either side of the block of $k$ right-handed half-twists are labeled as shown below (for $k$ even and $k$ odd).  Here $X = (ba)^tc$ and $Y = (ba)X = (ba)^{t+1}c$. (If $k < 0$, there are $\vert k \vert$ left-handed half-twists; the same formulas hold, where $(ba)^{-1} = \overline{ba} = ab$.)
$$\scalebox{1}{\includegraphics{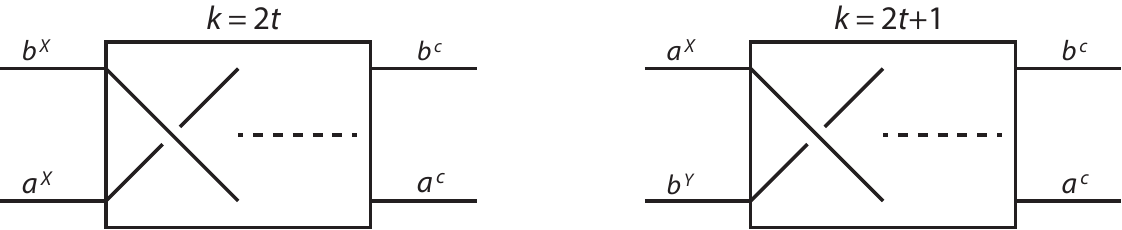}}$$
\end{lemma}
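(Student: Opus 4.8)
The plan is to prove Lemma \ref{L:halftwists} by induction on the number $k$ of half-twists, tracking the Wirtinger labels of the two strands of $T_k$ as they pass through the twist region. The only algebraic input beyond the crossing relations themselves is the relation $c^{ba} = c$ established just above, together with its consequences recorded in Lemma \ref{L:r1}; these let me slide a factor of $ba$ past $c$ and so collapse the conjugating words that accumulate at successive crossings into the stated powers of $ba$, recovering $X = (ba)^t c$ and $Y = (ba)X = (ba)^{t+1}c$.

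First I would fix the labels of the two incoming strands at the bottom of the twist region, where it meets the clasp, reading them directly off the diagram in Figure \ref{F:Mk}; this furnishes the base case (small $|k|$). I would then isolate a single half-twist as the basic inductive step. A half-twist is one crossing, so it contributes exactly one relation of the form shown in Figure \ref{crossing}: the under-strand label $p$ is replaced by $p^{\,q}$, where $q$ is the over-strand label, the over-strand keeps its label, and the two strands exchange positions. Substituting the inductive labels into this relation and then reducing with Lemma \ref{L:r1} (rewriting the occurrences of $c\cdot ba$ and $ba\cdot c$) produces the labels after one more half-twist.

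Because consecutive crossings in the twist region alternate which strand is the over-strand, it is cleanest to organize the induction in steps of two half-twists, i.e. one full twist, with $t = \lfloor k/2\rfloor$. One full twist increases $t$ by one and multiplies the relevant conjugating word by $ba$ — exactly the passage from $X$ to $Y = (ba)X$ in the statement. The single intervening half-twist accounts for the difference between the $k$ even and $k$ odd labelings, so I would carry the two parities as parallel cases throughout the induction and check that each closes up under the step. Finally, for $k < 0$ I would rerun the identical argument with the inverse crossings, using the stated convention $(ba)^{-1} = ab$, so that every formula persists verbatim.

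The main obstacle is purely bookkeeping rather than conceptual: one must keep the over/under data and the two strand orientations consistent at every crossing so that the correct generator is appended to the conjugating word and on the correct side, and then verify at each step that Lemma \ref{L:r1} genuinely reduces that word to the claimed power of $ba$ times $c$. Once the base labels and the single-crossing transformation are pinned down, the even and odd cases follow by routine computation, which is why the argument deserves the label \emph{easy}.
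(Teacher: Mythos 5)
The paper gives no proof of this lemma at all: it cites \cite{Me} and describes the proof as ``an easy inductive argument,'' and your proposal is exactly that argument---induction on the half-twists via Wirtinger relabeling at each crossing, with the even/odd parity split and the convention $(ba)^{-1}=ab$ for $k<0$---so it matches the intended proof. One minor correction: the relation $c^{ba}=c$ and Lemma \ref{L:r1} are not actually what collapses the conjugating words, and are not needed here; since $c$ enters each strand label only as the final letter of the exponent (from passing under $C$), re-association via Lemma \ref{leftassoc} gives $\left(x^{uc}\right)^{\left(y^{vc}\right)}=x^{u\bar{v}yvc}$, so the $c$ stays at the tail automatically, and the words reduce to the stated powers of $ba$ using only the axioms $x^{x}=x=x^{\bar{x}}$ (e.g.\ $b^{bac}=b^{ac}$) together with the assumed relations $x^{a^2}=x^{b^2}=x$ to absorb the inverse letters arising from the strand orientations.
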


When $k = 2t$ is even, the clasp at the bottom of the diagram in Figure \ref{F:Mk} gives the two relations:
$$a^{(ba)^t c a} = b^{(ba)^t c} \qquad {\rm and} \qquad a^{b^{(ba)^t c}} = b.$$
We can rewrite the first relation as follows, using Lemma \ref{L:r1}:
\begin{align*}
a^{(ba)^t c a} = b^{(ba)^t c} &\implies a^{c a(ab)^t} = b^{(ba)^t c} \\
&\implies a^{ca} = b^{(ba)^t c (ba)^t} = b^{(ba)^{2t} c} \\
&\implies a^{ca\bar{c}} = b^{(ba)^k} = b^{(ab)^{k-1} a} \\
&\implies a^{ca\bar{c}a} = b^{(ab)^{k-1}} 
\end{align*}
And, similarly, we can rewrite the second relation as:
\begin{align*}
a^{b^{(ba)^t c}} = b &\implies a^{\bar{c}(ab)^t b (ba)^t c} = b \\
&\implies a^{\bar{c} (ab)^{2t-1} a c} = b \\
&\implies a^{\bar{c} a c (ba)^{2t-1}} = b \\
&\implies a^{\bar{c} a c} = b^{(ab)^{2t-1}} = b^{(ab)^{k-1}}
\end{align*}
Combining these gives us $a^{ca\bar{c}a} = a^{\bar{c}ac}$, or $a^{ca(\bar{c}a)^2 c} = a$.

When $k = 2t+1$ is odd, the clasp gives us relations:
$$b^{(ba)^{t+1} c a} = a^{(ba)^t c} \qquad {\rm and} \qquad a^{a^{(ba)^t c}} = b.$$
However, a similar argument to the case when $k$ is even once again gives the relations $a^{ca\bar{c}a} = b^{(ab)^{k-1}} = a^{\bar{c} a c}$.  So in either case, we get the following presentation for $Q_{(2,3)}(M_k)$, where $x$ is any of the generators $a, b, c$:
$$Q_{(2,3)}(M_k) = \langle a, b, c \mid c^{ba} = c,\ a^{ca\bar{c}a} = a^{\bar{c}ac},\ a^{\bar{c}ac} = b^{(ab)^{k-1}},\ x^{a^2} = x^{b^2} = x^{c^3} = x  \rangle$$
We will denote the relations $R_1$, $R_2$, $R_3$, $J_{xy}$ (where $J_{xy}$ is the relation $x^{y^{n_y}} = x$ for $x, y \in \{a, b, c\}$).

\begin{remark} \label{R:Mk,k<0}
If $k \leq 0$, then relation $R_3$ becomes 
$$a^{\bar{c}ac} = b^{(ab)^{k-1}} = b^{(ba)^{-k+1}} = b^{(ab)^{-k}a} \implies a^{\bar{c}aca} = b^{(ab)^{-k}}$$
and we can rewrite $R_2$ as
$$a^{ca\bar{c}} = a^{\bar{c}aca} \implies a^{ca\bar{c}} = b^{(ab)^{-k}}.$$
Since $-k = (1-k)-1 = (1+\vert k \vert )-1$, we see that the relations for $Q_{(2,3)}(M_k)$ are the same as for $Q_{(2,3)}(M_{\vert k\vert+1})$, except that $c$ and $\bar{c}$ are reversed.  But this is an isomorphism, so it suffices to consider the case when $k > 0$.  Moreover, when $k < 0$, $\vert 2k-1\vert = -(2k-1) = 2\vert k\vert +1 = 2(\vert k \vert +1)-1$, so the formula in Theorem \ref{T:QNMk} will still hold.
\end{remark}

\begin{figure}[htbp]
$$\includegraphics[width=4.75in]{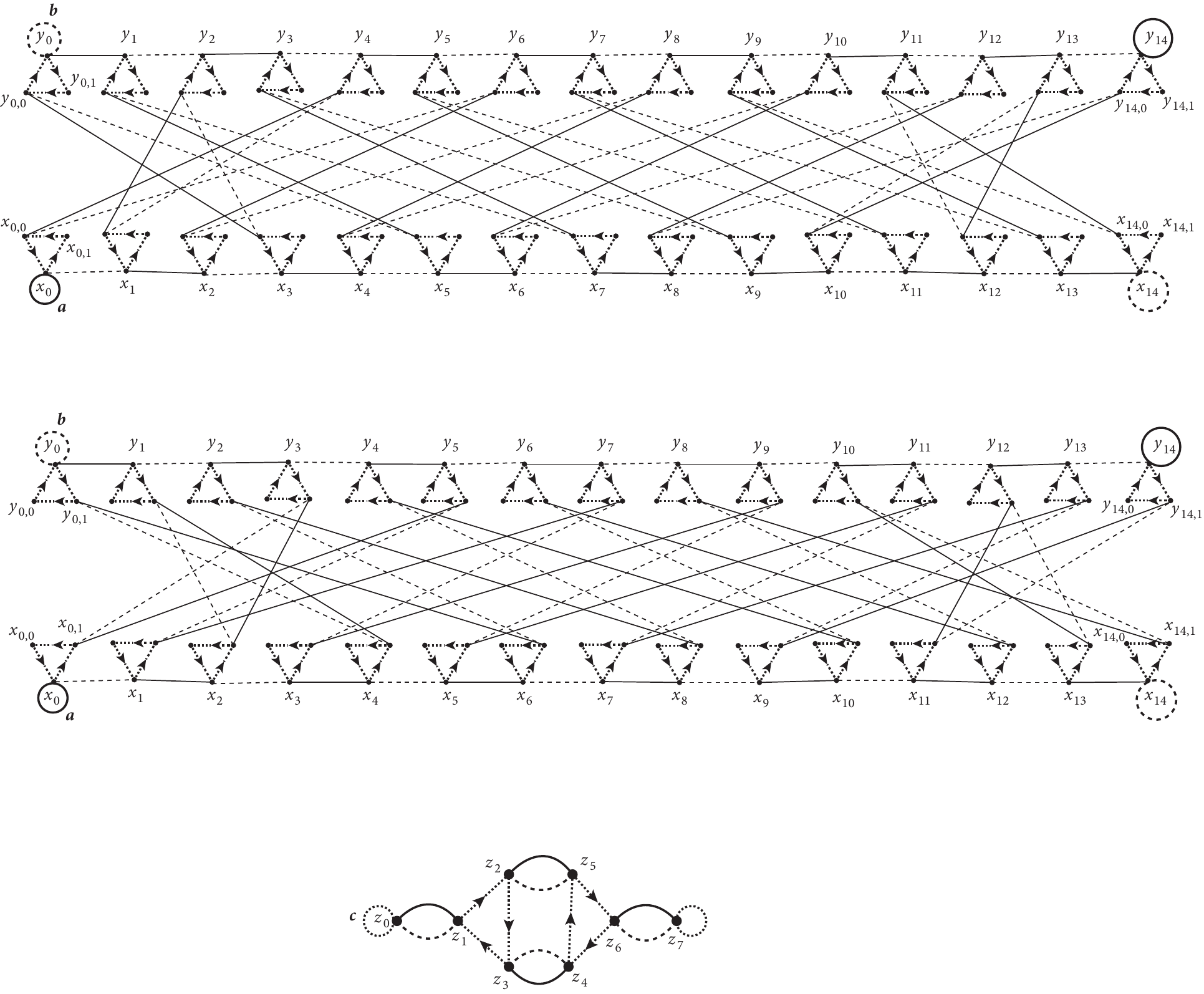}$$
\caption{Cayley graph for $Q_{(2,3)}(M_3)$. Edges corresponding to the operation of edges $a$, $b$ and $c$ are represented by solid, dashed and dotted lines, respectively.  For clarity, the component containing $a$ and $b$ has been split into two pieces.}
\label{F:k=3}
\end{figure}

We will now proceed to construct the Cayley graph for this quandle.  An example for $k=3$ is shown in Figure~\ref{F:k=3}. Although the Cayley graph has two components, we have drawn the larger component in two pieces; the first shows the edges labeled $a$ and $b$ adjacent to the vertices $x_{i,0}$ and the other shows the edges labeled $a$ and $b$ adjacent to the vertices $x_{i,1}$ (we will explain this labeling when we trace out the primary relations). As a corollary of our construction, we will show that the quandle is finite by proving Theorem \ref{T:QNMk}.

We will now prove some relations in $Q_{(2,3)}(M_k)$ that will help us trace out the Cayley graph.  We first rewrite relations $R_2$ and $R_3$.

\begin{lemma} \label{L:relationsb}
\begin{enumerate}
	\item $b^{aca\bar{c}} = a^{(ba)^{k-1}}$
	\item $b^{\bar{c}aca} = a^{(ba)^{k-1}}$
\end{enumerate}
\end{lemma}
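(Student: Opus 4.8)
The plan is to derive the two identities in Lemma~\ref{L:relationsb} directly from the rewritten relations $R_2$ and $R_3$ in the presentation for $Q_{(2,3)}(M_k)$, using the re-association rules of Lemma~\ref{L:r1} (which apply because $R_1: c^{ba}=c$ holds) together with the quandle axioms and the relations $J_{xy}$ (in particular $x^{a^2}=x^{b^2}=x^{c^3}=x$). The key observation is that $R_2$ and $R_3$ both contain the common subword $a^{\bar{c}ac}$: explicitly $R_2$ reads $a^{ca\bar{c}a}=a^{\bar{c}ac}$ and $R_3$ reads $a^{\bar{c}ac}=b^{(ab)^{k-1}}$. Chaining them gives $a^{ca\bar{c}a}=b^{(ab)^{k-1}}$, which is already close to the desired form once I take inverses to convert the $b^{(ab)^{k-1}}$ on the right into $a^{(ba)^{k-1}}$ on the left.

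First I would prove part (1). Starting from $a^{ca\bar{c}a}=b^{(ab)^{k-1}}$ (the concatenation of $R_2$ and $R_3$), I apply the second quandle axiom A2 to ``move'' the word $(ab)^{k-1}$ across: since $(ab)^{k-1}$ is a word in the free group, $u^{w}=v$ is equivalent to $u=v^{\bar{w}}$, so $b=\left(a^{ca\bar{c}a}\right)^{\overline{(ab)^{k-1}}}=a^{ca\bar{c}a\,(ba)^{k-1}}$. I then want to isolate $b^{aca\bar{c}}$; using A2 again, $b^{aca\bar{c}}=a^{ca\bar{c}a(ba)^{k-1}aca\bar{c}}$, and I simplify the tail $a(ba)^{k-1}aca\bar{c}$ using $a^2$-type cancellations and the re-association identities of Lemma~\ref{L:r1} (parts (2)--(4), which let me commute $c$ and $\bar{c}$ past blocks $ab$, $ba$). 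The goal is to collapse the word on the right down to $(ba)^{k-1}$, yielding $b^{aca\bar{c}}=a^{(ba)^{k-1}}$. For part (2), I would run the parallel computation starting from the other grouping, replacing the prefix $aca\bar{c}$ by $\bar{c}aca$; here relation $R_2$ in the symmetric form $a^{ca\bar{c}a}=a^{\bar{c}ac}$ is exactly what lets me interchange these two prefixes, so part (2) should follow from part (1) together with $R_2$ almost immediately.

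The main obstacle will be the bookkeeping of the word reduction in the free group modulo the relations: I must carefully track the exponents $(ba)^{k-1}$, $(ab)^{k-1}$ and their reversals, and apply Lemma~\ref{L:r1} exactly the right number of times to push every $c$ or $\bar{c}$ to where it cancels via $c^3=\text{id}$ or combines with its inverse. A subtle point is that Lemma~\ref{L:r1} only guarantees commutation of $c$ with the specific blocks $ab$ and $ba$ (not with single generators), so I need the power $(ab)^{k-1}$ to decompose cleanly into such blocks; the parity of $k$ may force me to handle a leftover factor of $a$ or $b$ separately, as happened in the derivation of the presentation itself. I expect the verification to be routine once the pattern of cancellation is set up correctly, so the real work is organizing the re-association so that each displayed line follows from the previous by a single named identity, mirroring the style of the proofs of Lemma~\ref{L:relations} and Lemma~\ref{L:secondary}.
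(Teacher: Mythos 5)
Your proposal is correct and matches the paper's proof in essentials: part (1) is obtained exactly as you describe, by chaining $R_2$ and $R_3$ into $a^{ca\bar{c}a} = b^{(ab)^{k-1}}$, moving the conjugating word across with axiom A2 (using $x^{a^2}=x$ to replace $\bar{a}$ by $a$), and commuting the block $aca\bar{c}$ past $(ab)^{k-1}$ via Lemma~\ref{L:r1} --- and your parity worry is unfounded, since $aca\bar{c} = (ac)(a\bar{c})$ is a product of two blocks of the form covered by Lemma~\ref{L:r1}(4) and hence commutes with $ab$ and $ba$ wholesale, so the word $ca\bar{c}a\cdot aca\bar{c}$ then cancels freely. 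One caveat: $R_2$ as stated is an equality of \emph{elements} based at $a$, not of operator words, so it does not let you ``interchange the prefixes'' $aca\bar{c}$ and $\bar{c}aca$ acting on $b$ in a single step; however, your fallback of running the parallel computation from the other grouping $a^{\bar{c}ac} = b^{(ab)^{k-1}}$ (i.e., from $R_3$ directly, which your part (1) plus $R_2$ recovers) is precisely what the paper does for part (2), so the plan goes through.
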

\begin{proof}
\begin{align*}
b^{(ab)^{k-1}} = a^{ca\bar{c}a} &\implies b^{(ab)^{k-1}aca\bar{c}} = a \\
&\implies b^{aca\bar{c}(ab)^{k-1}} = a \ {\rm by\ Lemma\ }\ref{L:r1} \\
&\implies b^{aca\bar{c}} = a^{(ba)^{k-1}}
\end{align*}
\begin{align*}
b^{(ab)^{k-1}} = a^{\bar{c}ac} &\implies b^{(ab)^{k-1}\bar{c}ac} = a \\
&\implies b^{\bar{c}a(ba)^{k-1}c} = b^{\bar{c}ac(ba)^{k-1}} = a \\
&\implies b^{\bar{c}ac} = a^{(ab)^{k-1}} = a^{(ba)^{k-1}a} \\
&\implies b^{\bar{c}aca} = a^{(ba)^{k-1}}
\end{align*}
\end{proof}

\begin{lemma} \label{L:Mksides}
For any integer $k$, the following relations hold in $Q_{(2,3)}(M_k)$. \begin{enumerate}
	\item $P_1: a^{(ba)^{3(2k-1)}} = a$
	\item $P_2: b^{(ab)^{3(2k-1)}} = b$
\end{enumerate}
\end{lemma}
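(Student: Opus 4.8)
The plan is to prove $P_1$ and then obtain $P_2$ by the analogous computation with the roles of $a$ and $b$ interchanged. The goal for $P_1$ is to show that the operator $(ba)^{3(2k-1)}$ fixes the generator $a$. The heart of my approach is a single reduction that trades the long word $(ba)^{2k-1}$ for a short word in $a$ and $c$:
$$ a^{(ba)^{2k-1}} = a^{\bar{c}a\bar{c}a\bar{c}}. \qquad (\star) $$
The point is that the right-hand exponent contains exactly three letters $\bar{c}$; this is the source of the factor $3$ in $3(2k-1)$, once we invoke $c^3 = 1$.

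To establish $(\star)$, I would first rewrite Lemma~\ref{L:relationsb}(1). Lemma~\ref{L:r1}(1) gives the operator identity $aca = bcb$, so the relation $b^{aca\bar{c}} = a^{(ba)^{k-1}}$ becomes $a^{(ba)^{k-1}} = b^{bcb\bar{c}} = b^{cb\bar{c}}$ (using $b^b = b$). Splitting $(ba)^{2k-1} = (ba)^{k-1}(ba)^k$ then gives $a^{(ba)^{2k-1}} = \left(b^{cb\bar{c}}\right)^{(ba)^k}$. Repeatedly applying Lemma~\ref{L:r1} to commute $c$ and $\bar{c}$ past the powers of $ba$ and $ab$, together with the involution relations $x^{a^2} = x^{b^2} = x$, collapses the exponent $cb\bar{c}(ba)^k$ to $(ab)^{k-1}ca\bar{c}$, so that $a^{(ba)^{2k-1}} = b^{(ab)^{k-1}ca\bar{c}}$. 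Now relation $R_3$ (in the form $b^{(ab)^{k-1}} = a^{\bar{c}ac}$) yields $a^{(ba)^{2k-1}} = \left(a^{\bar{c}ac}\right)^{ca\bar{c}} = a^{\bar{c}ac^2a\bar{c}}$, and since $c^2 = \bar{c}$ this is exactly $(\star)$. The companion identity needed for $P_2$, namely $b^{(ab)^{2k-1}} = b^{cbcbc}$, is obtained the same way from $R_3$, the rewritten form $a^{(ba)^{k-1}} = b^{cb\bar{c}}$, and Lemma~\ref{L:r1} (note the $c$'s here come out unbarred, reflecting that the presentation is not symmetric in $a$ and $b$; since $c^3 = \bar{c}^3 = 1$ this causes no trouble).

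With $(\star)$ in hand, I would deduce $P_1$ by applying $(ba)^{2k-1}$ three times. Substituting $(\star)$ and then repeatedly using Lemma~\ref{L:r1}(2)--(4) to slide each $\bar{c}$ across the intervening powers of $ba$ and $ab$ (absorbing the $a \leftrightarrow b$ switches produced by the involution identities), and invoking $R_2$ to simplify the $a$-anchored subwords that appear, the accumulated $c$-letters should reduce modulo $c^3 = 1$, leaving $a^{(ba)^{3(2k-1)}} = a$; $P_2$ follows identically from its companion identity.

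I expect the main obstacle to be this third step rather than $(\star)$ itself. The difficulty is that each fresh factor $(ba)^{2k-1}$ must be passed across the $a$'s introduced by the previous substitution, and $a$ does \emph{not} commute with $(ba)^m$; consequently a single application of $(\star)$ does not telescope on its own (a bare iteration merely cycles the word back and forth). The clean cancellation depends on organizing the reassociation so that the nine $\bar{c}$'s line up to be killed by $c^3 = 1$, and this is precisely where $R_2$ — the one relation special to the element $a$, which breaks the otherwise two-step loop generated by Lemma~\ref{L:r1} — must be deployed at the right places. Getting that bookkeeping correct is the crux of the proof.
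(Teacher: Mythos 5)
Your proposal is correct, and it reorganizes the paper's argument in a genuinely different way. The paper proves $P_1$ by a single uninterrupted fourteen-step reduction of $a^{(ba)^{6k-3}}$, alternately invoking Lemma~\ref{L:relationsb} and $R_3$ six times to trade blocks $(ba)^{k-1}$ and $(ab)^{k-1}$ for short $c$-words, with $R_2$ appearing only at the very end. Your identity $(\star)$ packages one trade of each kind into a reusable step; in fact the paper's computation passes through the line $a^{(ba)^{4k-2}\bar{c}a\bar{c}a\bar{c}}$, which is exactly one application of $(\star)$ after commuting. Your derivation of $(\star)$ is correct as written, as is the companion identity $b^{(ab)^{2k-1}} = b^{cbcbc}$ (one gets $b^{cacac}$ first, and Lemma~\ref{L:r1}(1) converts it). The factored version has a real expository advantage: it isolates the source of the $3$ in $3(2k-1)$ as the three $\bar{c}$'s in $W = \bar{c}a\bar{c}a\bar{c}$ meeting $x^{c^3}=x$.

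The ``crux'' you flag does close, and more easily than you fear; your worry that the iteration ``does not telescope on its own'' is misplaced. The key observation is that $W$ commutes with $ba$ as an operator: by Lemma~\ref{L:r1}(2) the leading $\bar{c}$ passes $ba$ leaving it as $ba$, while by Lemma~\ref{L:r1}(3) each of the two pairs $a\bar{c}$ passes it flipping $ba \leftrightarrow ab$, and two flips restore $ba$. Hence $x^{W(ba)^m} = x^{(ba)^m W}$ for all $x$, the $a$'s introduced by $(\star)$ never obstruct anything, and
$$a^{(ba)^{3(2k-1)}} = a^{W(ba)^{2(2k-1)}} = a^{(ba)^{2(2k-1)}W} = a^{W(ba)^{2k-1}W} = a^{(ba)^{2k-1}W^2} = a^{W^3}.$$
The remaining identity $a^{W^3} = a$ is $k$-independent and needs only $R_2$ (twice) and the operator identities $x^{a^2} = x^{c^3} = x$, with no further use of the $b$-relations. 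Since $\bar{c}\bar{c} = c$ as operators, $W^3 = \bar{c}\,(a\bar{c}ac\,a\,\bar{c}aca)\,\bar{c}a\bar{c}$, and the bracketed nine letters are the point symmetry $S_y$ of $y = a^{\bar{c}aca}$. Postcomposing $R_2$ with $a$ gives $a^{\bar{c}aca} = a^{ca\bar{c}}$, so $S_y = ca\bar{c}\,a\,ca\bar{c}$ and
$$a^{W^3} = a^{\bar{c}(ca\bar{c}\,a\,ca\bar{c})\bar{c}a\bar{c}} = a^{a\bar{c}acaca\bar{c}} = a^{\bar{c}aca\,ca\bar{c}} = a^{ca\bar{c}\,ca\bar{c}} = a,$$
using $a^{aw} = a^w$, then $R_2$ again, and finally that $ca\bar{c}$ is an involutory operator. (This display is precisely the last five steps of the paper's own computation, which confirms that your route and the paper's are the same calculation differently organized.) For $P_2$, the same scheme runs on $b^{(cbcbc)^3}$; the $b$-anchored analogue of $R_2$ that you need, $b^{cb\bar{c}b} = b^{\bar{c}bc}$, follows from Lemma~\ref{L:relationsb} combined with Lemma~\ref{L:r1}(1), exactly as the paper observes at the end of its proof when it justifies reversing the roles of $a$ and $b$.
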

\begin{proof}
We are making several uses of Lemma \ref{L:r1}, along with the fact that $a^{\bar{c}aca} = a^{ca\bar{c}}$ (from relation $R_2$) and the fact that $x^{c^3} = x$.
\begin{align*}
a^{(ba)^{3(2k-1)}} &= a^{(ba)^{6k-3}} = a^{(ba)^{k-1}(ba)^{5k-2}} \\
&= b^{aca\bar{c}(ba)^{5k-2}} = b^{(ba)^{5k-2}aca\bar{c}} = b^{(ab)^{5k-3}ca\bar{c}} \\
&= a^{\bar{c}ac(ab)^{4k-2}ca\bar{c}} = a^{(ba)^{4k-2}\bar{c}acca\bar{c}} = a^{(ba)^{4k-2}\bar{c} a \bar{c} a \bar{c}} \\
&= b^{aca\bar{c} (ba)^{3k-1} \bar{c} a \bar{c} a \bar{c}} = b^{(ba)^{3k-1} aca\bar{c} \bar{c} a \bar{c} a \bar{c}} = b^{(ab)^{3k-2} caca\bar{c} a \bar{c}} \\
&= a^{\bar{c}ac (ab)^{2k-1} caca\bar{c} a \bar{c}} = a^{(ba)^{2k-1} \bar{c}a \bar{c} aca\bar{c} a \bar{c}} \\
&= b^{aca\bar{c} (ba)^k \bar{c}a \bar{c} aca\bar{c} a \bar{c}} = b^{(ba)^k acac a \bar{c} aca\bar{c} a \bar{c}} \\
&= b^{(ab)^{k-1} cac a \bar{c} aca\bar{c} a \bar{c}} = a^{\bar{c}ac cac a \bar{c} aca\bar{c} a \bar{c}} = a^{\bar{c}(a \bar{c} ac a \bar{c} a ca)\bar{c} a \bar{c}} \\
&= a^{\bar{c}a^{\bar{c}aca} \bar{c} a \bar{c}} = a^{\bar{c}a^{ca\bar{c}} \bar{c} a \bar{c}} = a^{\bar{c} (ca\bar{c} a ca \bar{c}) \bar{c} a \bar{c}} \\
&= a^{(\bar{c} a c a) c a \bar{c}} = a^{ca\bar{c} ca\bar{c}} = a
\end{align*}
Relation $P_2$ can be proved similarly, reversing the roles of $a$ and $b$.  Notice that if we combine Lemma \ref{L:relationsb} with $x^{a\tilde{c}a} = x^{b\tilde{c}b}$ (from Lemma \ref{L:r1}), we get $b^{cb\bar{c}} = b^{\bar{c}bcb} = a^{(ba)^{k-1}}$. Similarly, $a^{bcb\bar{c}} = a^{\bar{c}bcb} = b^{(ab)^{k-1}}$.  So all the relations we need hold if we reverse $a$ and $b$.
\end{proof}

We can now give another presentation of $Q_{(2,3)}(M_k)$ which is helpful for constructing the Cayley graph.
$$Q_{(2,3)}(M_k) = \langle a, b, c \mid R_1, R_2, R_3, P_1, P_2 \ x^{a^2} = x^{b^2} = x^{c^3} = x  \rangle$$
Since we've only added the relations $P_1$ and $P_2$, which we've already proved are conseequences of the others, this presentation is equivalent to the original one.  

Now we will begin our Cayley graph by tracing out the primary relations in this presentation. We will first consider the component of the graph containing generators $a$ and $b$.  We will denote the vertex $a$ by $x_0$ and the vertex $b$ by $y_0$ (with loops labeled $a$ and $b$, respectively).  Then we denote $a^{(ba)^i}$ by $x_{2i}$ and $a^{(ba)^i b}$ by $x_{2i+1}$.  This means $x_{2i}^b = x_{2i+1}$ and $x_{2i+1}^a = x_{2i+2}$. Similarly, we denote $b^{(ab)^i}$ by $y_{2i}$ and $b^{(ab)^i a}$ by $y_{2i+1}$ (so $y_{2i}^a = y_{2i+1}$ and $y_{2i+1}^a = y_{2i}$). Relations $P_1$ and $P_2$ tell us that $x_{6(2k-1)} = x_0$ and $y_{6(2k-1)} = y_0$, so the subscripts can be read modulo $6(2k-1)$.  In fact, if we trace out relation $P_1$ a bit more carefully, we find
\begin{align*}
a^{(ba)^{3(2k-1)}} = a &\implies a^{(ba)^{6k-3}} = a \\
&\implies a^{(ba)^{3k-2}} = a^{(ab)^{3k-1}} = a^{(ba)^{3k-2} b} \\
&\implies x_{6k-4}^b = x_{6k-4}.
\end{align*}
Similarly, tracing out relation $P_2$ implies $y_{6k-4}^a = y_{6k-4}$.  So relations $P_1$ and $P_2$ trace out the bottom and top (respectively) of the first component in Figure \ref{F:k=3}, with the loops on the right-hand side.  More generally, we observe that 
$$x_{2i} = a^{(ba)^{i}} = a^{(ab)^{(6k-3)-i}} = a^{(ba)^{6k-4-i}b} = x_{12k-7-2i}$$
and
$$x_{2i+1} = a^{(ba)^{i}b} = a^{(ba)^{6k-4-i}} = x_{12k-8-2i} = x_{12k-7-(2i+1)}.$$
So for any $i$, $x_i = x_{12k-7-i} = x_{12k-6+i}$. Since $12k-7-i = 12k-6 + (-1-i)$, we also see $x_i = x_{-1-i}$.  Then any $x_i$ is equivalent to one with $0 \leq i \leq 6k-4$ (and similarly for $y_i$).

Now, at each vertex $x_i$ and $y_i$, we trace the relations $x_i^{c^3} = x_i$ and $y_i^{c^3} = y_i$.  We will denote $x_i^c$ by $x_{i,1}$ and $x_i^{c^2} = x_i^{\bar{c}}$ by $x_{i,0}$, and similarly for $y_{i,1}$ and $y_{i,0}$, as in Figure \ref{F:k=3}. We will see that these are all the vertices in this component of the graph. It remains to trace the edges labeled $a$ and $b$ that connect the vertices $x_{i,j}$ to the vertices $y_{i,j}$.

We now trace relation $R_3: a^{\bar{c} a c} = b^{(ab)^{k-1}}$.  We can rewrite this as $a^{\bar{c} a} = b^{(ab)^{k-1} \bar{c}}$, which means $x_{0,0}^a = y_{2k-2,0}$.  Relation $R_2$ tells us that $a^{ca\bar{c}a} = a^{\bar{c} a c} = b^{(ab)^{k-1}}$, so $a^{ca} = b^{(ab)^{k-1}ac}$.  This means $x_{0,1}^a = y_{2k-1,1}$.

The following lemma traces out the remaining edges, using relations $R_1$, $R_2$ and $R_3$ (and their consequences).

\begin{lemma} \label{L:struts}
For any integer $i$, we have \begin{enumerate}
\begin{multicols}{2}
	\item $x_{2i,0}^a = y_{2k+2i-2,0}$
	\item $x_{2i,0}^b = y_{2k+2i,0}$
	\item $x_{2i,1}^a = y_{2k-2i-1,1}$
	\item $x_{2i,1}^b = y_{2k-2i-3,1}$
	\item $x_{2i+1,0}^a = y_{2k-2i-4,0}$
	\item $x_{2i+1,0}^b = y_{2k-2i-2,0}$
	\item $x_{2i+1,1}^a = y_{2k+2i+1,1}$
	\item $x_{2i+1,1}^b = y_{2k+2i-1,1}$ 
	\item $y_{2i,0}^a = x_{2k-2i-3,0}$
	\item $y_{2i,0}^b = x_{2k-2i-1,0}$
	\item $y_{2i,1}^a = x_{2k+2i,1}$
	\item $y_{2i,1}^b = x_{2k+2i-2,1}$
	\item $y_{2i+1,0}^a = x_{2k+2i-1,0}$
	\item $y_{2i+1,0}^b = x_{2k+2i+1,0}$
	\item $y_{2i+1,1}^a = x_{2k-2i-2,1}$
	\item $y_{2i+1,1}^b = x_{2k-2i-4,1}$
\end{multicols}
\end{enumerate}
\end{lemma}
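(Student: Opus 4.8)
The plan is to prove all sixteen identities together by induction on $i$, reducing each one to relations $R_2$ and $R_3$ together with their $a\leftrightarrow b$ analogues recorded after Lemma~\ref{L:relationsb}. First I would write down the word form of every vertex: $x_{2i,0}=a^{(ba)^i\bar c}$, $x_{2i,1}=a^{(ba)^ic}$, $x_{2i+1,0}=a^{(ba)^ib\bar c}$, $x_{2i+1,1}=a^{(ba)^ibc}$, with the $y$-vertices given by the same words after interchanging $a$ and $b$. Because Lemma~\ref{L:r1} lets $c^{\pm1}$ commute past both $ab$ and $ba$, and since $x^{a^2}=x^{b^2}=x$, these forms immediately yield the \emph{shift identities} $x_{2i+2,\epsilon}=x_{2i,\epsilon}^{\,ba}$ and $x_{2i+1,\epsilon}=x_{2i-1,\epsilon}^{\,ab}$ at both levels $\epsilon\in\{0,1\}$, and dually $y_{2i+2,\epsilon}=y_{2i,\epsilon}^{\,ab}$, $y_{2i+1,\epsilon}=y_{2i-1,\epsilon}^{\,ba}$. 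These four identities are the engine of the whole argument.

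Next I would set up the interlocking induction on the even-index $x$-relations, which is the cleanest case. Raising the index in an $a$-relation costs nothing: $x_{2i+2,\epsilon}^{a}=x_{2i,\epsilon}^{\,baa}=x_{2i,\epsilon}^{\,b}$, so part~(1) at $i+1$ is literally part~(2) at $i$ (and part~(3) at $i+1$ is part~(4) at $i$). Passing between the $a$- and $b$-relations at a fixed index is equally cheap: $x_{2i,\epsilon}^{\,b}=x_{2i,\epsilon}^{\,aab}=(x_{2i,\epsilon}^{\,a})^{ab}$, and then a single $y$-shift absorbs the trailing $ab$, so part~(2) follows from part~(1) (and part~(4) from part~(3)) at the same $i$. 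Together these close the even-index $x$-system: everything propagates upward from $i=0$. The odd-index relations (5)--(8) and the eight $y$-relations (9)--(16) are handled by the same two moves, except that some of their inductive steps must borrow a lower-index instance of a \emph{different} one of the sixteen relations; this is why all sixteen must be run as a single simultaneous induction rather than independently.

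The base cases are $i=0$. Parts~(1) and~(3) at $i=0$ are exactly $R_3$ and $R_2$ rewritten---these are the two struts $x_{0,0}^a=y_{2k-2,0}$ and $x_{0,1}^a=y_{2k-1,1}$ already traced before the lemma---and parts~(2),(4) at $i=0$ then follow by the same-index move above. The odd base cases at index $1$ come from the even ones through Lemma~\ref{L:r1}: for example, using $x^{b\bar c}=x^{a\bar cab}$ (which is $x^{b\bar cb}=x^{a\bar ca}$ followed by an application of $b$) gives $x_{1,0}^{a}=a^{b\bar ca}=a^{\bar caba}=(a^{\bar ca})^{ba}=(b^{(ab)^{k-1}\bar c})^{ba}=y_{2k-4,0}$, which is part~(5) at $i=0$; the remaining odd base cases are entirely analogous. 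The eight $y$-base cases mirror all of these, now seeded by the $b$-forms of $R_2$ and $R_3$.

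The one genuinely delicate point---and the step I expect to be the main obstacle---is the index bookkeeping. Since $x_i=x_{-1-i}=x_{12k-7-i}$ with period $12k-6$ (and the same for $y_i$), every subscript generated during an inductive step must be reconciled modulo $12k-6$ and under the reflection $i\mapsto -1-i$ before it can be matched to the target. Verifying that each of the sixteen prescribed subscripts ($2k+2i-2$, $2k-2i-1$, $2k-2i-4$, $2k+2i+1$, and so on) emerges correctly after the parity-dependent shifts---keeping the two $c$-levels, the even/odd cases, and the modular reflection consistent across all sixteen families at once---is where essentially all of the effort lies. Each individual algebraic manipulation is routine given Lemma~\ref{L:r1}, but assembling them into a consistent simultaneous induction without dropping a case or mis-shifting an index is the real content of the proof.
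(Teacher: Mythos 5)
Your proposal is sound: the word forms of the vertices are right, the shift identities $x_{2i+2,\epsilon}=x_{2i,\epsilon}^{ba}$, $x_{2i+1,\epsilon}=x_{2i-1,\epsilon}^{ab}$ (and their $y$-duals) do follow from Lemma~\ref{L:r1} together with $x^{a^2}=x^{b^2}=x$, your two propagation moves genuinely interlock the sixteen families (e.g.\ part~(1) at $i+1$ is part~(2) at $i$, and $x_{2i,\epsilon}^b=(x_{2i,\epsilon}^a)^{ab}$ converts an $a$-relation into the $b$-relation after one $y$-shift), your sample base case $x_{1,0}^a=y_{2k-4,0}$ checks out against part~(5), and upward induction from $i=0$ suffices for all integers $i$ by the periodicity $x_j=x_{j+(12k-6)}$. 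However, the paper's proof is organized quite differently and avoids induction altogether. It proves parts (1)--(4) directly for arbitrary $i$ in a single line each, by using Lemma~\ref{L:r1} to commute $\tilde c$ past the \emph{entire} block $(ba)^i$ at once so that $R_3$ (at level $0$), or the combined relation $a^{ca\bar ca}=b^{(ab)^{k-1}}$ (at level $1$), applies immediately: for instance $x_{2i,0}^a=a^{(ba)^i\bar ca}=a^{\bar ca(ab)^i}=b^{(ab)^{k-1}\bar c(ab)^i}=b^{(ab)^{k-1+i}\bar c}=y_{2k+2i-2,0}$. It then obtains (5)--(8) for free from the reflection $x_{2i+1}=x_{-1-(2i+1)}=x_{2(-i-1)}$ by substituting $i\mapsto -i-1$ into (1)--(4), and (9)--(16) by simply reversing the first eight formulas, using that $S_a$ and $S_b$ are involutions. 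So the step you single out as the main obstacle --- reconciling sixteen interlocked inductive families under the modular and reflection identifications --- is exactly what the paper's organization eliminates: four direct computations plus two symmetry observations yield everything, with the reflection $i\mapsto -1-i$ serving as a labor-saving device rather than a bookkeeping hazard. Your route buys a more mechanical verification driven entirely by the two shift moves from four seed relations ($R_2$, $R_3$ and their $a\leftrightarrow b$ forms from Lemma~\ref{L:relationsb}), but at the cost of parity-dependent shift directions (for odd indices the $ab$-shift \emph{lowers} the index, and raising an odd index turns a $b$-relation into an $a$-relation at a lower index) and a sixteen-case simultaneous induction that the paper's symmetry reductions render unnecessary.
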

\begin{proof}
To prove part (1) and (2), observe
$$x_{2i,0}^a = a^{(ba)^i \bar{c} a} = a^{\bar{c} a (ab)^i} = a^{(\bar{c} a c) \bar{c} (ab)^i} \stackrel{R_3}{=} b^{(ab)^{k-1} \bar{c} (ab)^i} = b^{(ab)^{k-1 + i} \bar{c}} = y_{2k+2i-2,0}$$
and
$$x_{2i,0}^b = a^{(ba)^i \bar{c} b} = a^{\bar{c} b (ab)^i} = a^{(\bar{c} a c) \bar{c} (ab)^{i+1}} \stackrel{R_3}{=} b^{(ab)^{k-1} \bar{c} (ab)^{i+1}} = b^{(ab)^{k + i} \bar{c}} = y_{2k+2i,0}.$$
Parts (3) and (4) are proved similarly, using the relation $a^{ca\bar{c}a} = b^{(ab)^{k-1}}$ (from combining $R_2$ and $R_3$).
\begin{align*}
x_{2i,1}^a = a^{(ba)^i c a} = a^{c a (ab)^i} &= a^{(ca\bar{c}a) ac (ab)^i} = b^{(ab)^{k-1} ac (ab)^i} = b^{(ab)^{k-1} (ba)^{i} ac} \\
&= b^{(ab)^{k-1-i} ac} = y_{2(k-1)-2i + 1,1} = y_{2k-2i-1,1} \\
x_{2i,1}^b = a^{(ba)^i c b} = a^{c b (ab)^i} &= a^{(ca\bar{c}a) ac (ab)^{i+1}} = b^{(ab)^{k-1} ac (ab)^{i+1}} = b^{(ab)^{k-1} (ba)^{i+1} ac} \\
&= b^{(ab)^{k-2-i} ac} = y_{(2k-4-2i) + 1,1} = y_{2k-2i-3,1}.
\end{align*}
For parts (5)-(8), recall that $x_{2i+1} = x_{-1-(2i+1)} = x_{2(-i-1)}$, and apply parts (1)-(4), replacing $i$ with $-i-1$ in each formula.  Parts (9)-(16) simply reverse the first eight formulas.  For example, reversing formula (1), we find $y_{2i,0}^a = y_{2k+2(i-k+1)-2,0}^a = x_{2i-2k+2,0} = x_{-1-(2i-2k+2),0} = x_{2k-2i-3,0}$.
\end{proof}

This traces out all the remaining edges. In Figure \ref{F:k=3}, we divide this component into two parts for clarity, one showing the edges labeled $a$ and $b$ at $x_{i,0}$, and the other showing the edges at $x_{i,1}$.

Now we need to consider the secondary relations, and confirm that there is no additional collapsing in the graph. We have five secondary relations:
\begin{alignat*}{2}
W_1&: c^{ba} = c &&\implies x^{abcba\bar{c}} = x \\
W_2&: a^{ca\bar{c}a} = a^{\bar{c}ac} &&\implies x^{aca\bar{c}aca\bar{c}a} = x^{\bar{c}aca\bar{c}ac} \\
W_3&: a^{\bar{c}ac} = b^{(ab)^{k-1}} &&\implies x^{\bar{c}aca\bar{c}ac} = x^{(ba)^{2k-2}b} \\
W_4&: a^{(ba)^{3(2k-1)}} = a &&\implies x^{(ab)^{6(2k-1)}} = x \\
W_5&: b^{(ab)^{3(2k-1)}} = b &&\implies x^{(ba)^{6(2k-1)}} = x 
\end{alignat*}
Note that secondary relations $W_4$ and $W_5$ are equivalent, so we only need to consider one of them. We will verify that each secondary relation holds at $x = x_{2i}$ using Lemma \ref{L:struts}; the proofs for other vertices are very similar.
\begin{align*}
W_1:\ &x_{2i}^{abcba\bar{c}} = x_{2(i-1)}^{cba\bar{c}} = x_{2(i-1),1}^{ba\bar{c}} = y_{2k-3-2(i-1),1}^{a\bar{c}} = y_{2k-2i-1,1}^{a\bar{c}} = x_{2i,1}^{\bar{c}} = x_{2i}. \\
W_2:\ &x_{2i}^{aca\bar{c}aca\bar{c}a} = x_{2i-1}^{ca\bar{c}aca\bar{c}a} = x_{2(i-1)+1,1}^{a\bar{c}aca\bar{c}a} = y_{2k+2(i-1)+1,1}^{\bar{c}aca\bar{c}a} = y_{2k+2i-1}^{aca\bar{c}a} = y_{2k+2i-2}^{ca\bar{c}a} \\
&\ \ = y_{2k+2i-2,1}^{a\bar{c}a} = x_{2k+2(k+i-1),1}^{\bar{c}a} = x_{4k+2i-2}^a = x_{4k+2i-3}\\
&x_{2i}^{\bar{c}aca\bar{c}ac} = x_{2i,0}^{aca\bar{c}ac} = y_{2k+2i-2,0}^{ca\bar{c}ac} = y_{2k+2i-2}^{a\bar{c}ac} = y_{2k+2i-1}^{\bar{c}ac} = y_{2(k+i-1)+1,0}^{ac} \\
&\ \ = x_{2k+2(k+i-1)-1,0}^c = x_{4k+2i-3} = x_{2i}^{aca\bar{c}aca\bar{c}a} \\
W_3:\ &x_{2i}^{(ba)^{2k-2}b} = x_{2i+4k-4}^b = x_{4k+2i-3} = x_{2i}^{\bar{c}aca\bar{c}ac}\\
W_4:\ &x_{2i}^{(ab)^{6(2k-1)}} = x_{2i + 12(2k-1)} = x_{2i+(12k-6)+(12k-6)} = x_{2i}
\end{align*}

Now we turn to the second component, containing generator $c$. This component is shown again in Figure \ref{F:QNM3c}; it does not depend on $k$. We begin with the generator $c$, denoted $z_0$, and add a loop labeled $c$. Then the primary relation $c^{ba} = c$ implies $c^a = c^b$, so we add a second vertex $z_1 = c^a = c^b$. Next we add vertices $z_2 = z_1^c$ and $z_3 = z_2^c = z_1^{\bar{c}}$ (since $z_1^{c^3} = z_1$).

\begin{figure}[htbp]
$$\includegraphics[width=2.75in]{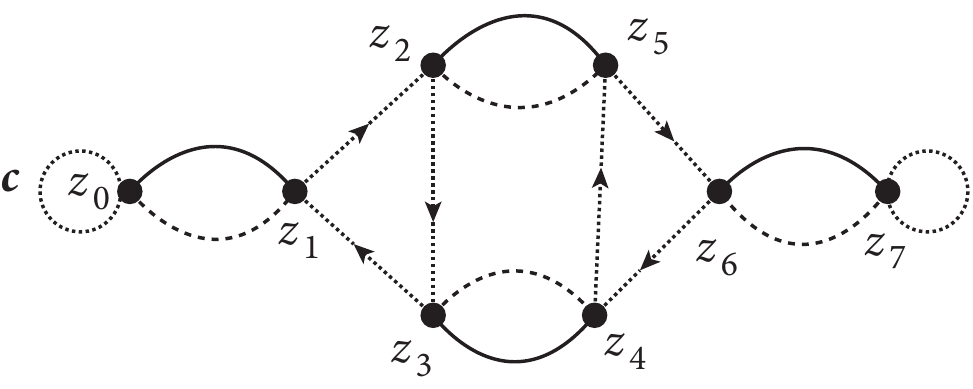}$$
\caption{Component of the Cayley graph for $Q_{(2,3)}(M_3)$ containing generator $c$ (vertex $z_0$). Edges corresponding to the operation of edges $a$, $b$ and $c$ are represented by solid, dashed and dotted lines, respectively.}
\label{F:QNM3c}
\end{figure}

Now we observe, using Lemma \ref{L:r1},
$$z_2^{ab} = c^{acab} = c^{bcbb} = c^{bc} = z_1^c = z_2 \implies z_2^a=z_2^b$$
and
$$z_3^{ab} = c^{a\bar{c}ab} = c^{b\bar{c}bb} = c^{b\bar{c}} = z_1^{\bar{c}} = z_3 \implies z_3^a=z_3^b$$
So we let $z_4 = z_3^a = z_3^b$ and $z_5 = z_2^a = z_2^b$.

For the last couple of steps, we use secondary relation $W_3: x^{\bar{c}aca\bar{c}ac} = x^{(ba)^{2k-2}b}$.  We first apply this at vertex $z_0 = c$.
$$c^{\bar{c}aca\bar{c}ac} = c^{aca\bar{c}ac} = z_1^{ca\bar{c}ac} = z_2^{a\bar{c}ac} = z_5^{\bar{c}ac}$$
and
$$c^{(ba)^{2k-2}b} = c^b = z_1$$
So $z_1 = z_5^{\bar{c}ac}$, which means $z_5^{\bar{c}} = z_1^{\bar{c}a} = z_3^a = z_4$.  Now let $z_6 = z_5^c = z_4^{\bar{c}}$.

Then,
$$z_6^{ab} = c^{ac(aca)b} = c^{ac(bcb)b} = c^{acbc} = z_6.$$
So we let $z_7 = z_6^a = z_6^b$.  The final step is the loop at $z_7$. This results from applying secondary relation $W_3$ at vertex $z_7$.
$$z_7^{(ba)^{2k-1}b} = z_7^b = z_6 \implies z_7^{\bar{c}aca\bar{c}ac} = z_6$$
$$\implies z_7^{\bar{c}} = z_6^{\bar{c}aca\bar{c}a} = z_5^{aca\bar{c}a} = z_2^{ca\bar{c}a} = z_3^{a\bar{c}a} = z_4^{\bar{c}a} = z_6^a = z_7.$$
Hence there is a loop labeled $c$ at $z_7$.  This traces out the component shown in Figure \ref{F:QNM3c}.  It only remains to show that the secondary relations do not induce additional collapsing, but this is easily checked for each of the eight vertices.  These completes our construction of the Cayley graph for $Q_{(2,3)}(M_k)$.

Reviewing our results, we see that the component containing generators $a$ and $b$ has vertices $x_i, x_{i,0}, x_{i,1}, y_i, y_{i,0}$ and $y_{i,1}$ for $0 \leq i \leq 6k-4$, so the component has a total of $6(6k-3) = 18(2k-1)$ vertices.  The second component has 8 vertices, for any value of $k$, for a total of $18(2k-1)+8$ vertices.  This completes the proof of Theorem \ref{T:QNMk}.

\section{Open questions and future work}

We have proved one direction of our Main Conjecture: \medskip

\noindent {\bf Main Conjecture.} {\em A link $L$ with $k$ components has a finite $(n_1,\dots, n_k)$-quandle if and only if there is a spherical orbifold with underlying space $\mathbb{S}^3$ whose singular locus is the link $L$, with component $i$ labeled $n_i$.} \medskip

The remaining, and harder, problem is to prove the other direction -- namely, that the links studied in this paper are the {\em only} ones with finite $N$-quandles. The corresponding proof for $n$-quandles \cite{HS2} uses a relationship between the $n$-quandle and the fundamental group of the $n$-fold branched cover over the link found by Winker \cite{WI}. It is not clear how to define a branched cover with different branching orders over different components of the link, so this argument seems difficult to extend.

Another interesting question is whether this work extends from links to spatial graphs. Quandles can also be defined for spatial graphs, and Dunbar's classification of geometric 3-orbifolds includes many whose singular sets are graphs rather than links.  In these cases, there are often different labels on different edges, so it is natural to consider $N$-quandles of the spatial graphs in Dunbar's classification, and ask whether they are finite. We are currently engaged in investigating the $N$-quandles for these graphs.

\section*{References}

\bibliography{Nquandle.bib}

\end{document}